\pgfplotsset{compat=newest}
\definecolor{bluefromheader}{cmyk}{.83,.44,0,.33}
\newcommand{\mylabel}[2]{#2\def\@currentlabel{#2}\label{#1}}
\newtheorem{assumption}[theorem]{Assumption}
\newtheorem{remark}[theorem]{Remark}
\newtheorem{example}[theorem]{Example}
\newcommand{\stochDomain}{\Omega}
\newcommand{\sigmaAlgebra}{\cA}
\newcommand{\probSpace}{(\stochDomain, \sigmaAlgebra, \bbP)}
\newcommand{\timeInterval}{\bbT}
\newcommand{\spaceDomain}{\bbX}
\newcommand{\stopTime}{T}
\newcommand{\solFlux}{f}
\newcommand{\fluxFunction}{\mathfrak{\solFlux}}
\newcommand{\invFlux}{\fluxFunction^{-1}_{\pm}}
\newcommand{\spaceVar}{x}
\newcommand{\timeVar}{t}
\newcommand{\stochVar}{\omega}
\newcommand{\solution}{u}
\newcommand{\initialCond}{\solution_{0}}
\newcommand{\steadyStatePar}{\alpha}
\newcommand{\entropyConstant}{m}
\newcommand{\steadyStateSolution}[1]{\entropyConstant^{#1}_{\steadyStatePar}(\spaceVar)}
\newcommand{\steadyStateSolutions}{\entropyConstant^{\pm}_{\steadyStatePar}(\spaceVar)}
\newcommand{\stochSteadyStateSolutions}{\entropyConstant^{\pm}_{\steadyStatePar}(\stochVar, \spaceVar)}
\newcommand{\stochSteadyStateSolution}[1]{\entropyConstant^{#1}_{\steadyStatePar}}
\newcommand{\stochSteadyStateSolutionArg}[1]{\entropyConstant^{#1}_{\steadyStatePar}(\stochVar, \spaceVar)}
\newcommand{\partAdaptedEntropies}[1]{E_{#1}}
\newcommand{\nullSet}{\cN}
\newcommand{\subsolution}{g_{-}}
\newcommand{\supersolution}{g_{+}}
\newcommand{\modalityPointFct}{\solution_M}
\newcommand{\imageBound}{M_0}
\newcommand{\lipschitzInterval}{I}
\newcommand{\fluxLipschitzConstant}{L_{\lipschitzInterval}}
\newcommand{\solutionSpace}{L^{\infty}(\bbR \times \timeInterval) \cap C^{0}(\timeInterval; L^{1}_{\text{loc}}(\bbR))}
\newcommand{\jumpCoeff}{a}
\newcommand{\meanField}{\bar{\jumpCoeff}}
\newcommand{\gaussFunctional}{\phi}
\newcommand{\gaussField}{W}
\newcommand{\jumpField}{P}
\newcommand{\covarianceOperator}{Q}
\newcommand{\borelSigmaAlgebra}{\cB}
\newcommand{\partition}{\cT}
\newcommand{\jumpMeasure}{\lambda}
\newcommand{\domainOfInterest}{\spaceDomain}
\newcommand{\leftDomainBound}{\domainOfInterest_{l}}
\newcommand{\rightDomainBound}{\domainOfInterest_{r}}
\newcommand{\numberOfJumps}{\tau}
\newcommand{\jumpHeights}[1]{\jumpField_{#1}}
\newcommand{\coeffLowerBound}{\jumpCoeff_{-}}
\newcommand{\discontinuitySet}{\mathfrak{D}}
\newcommand{\jumpPosition}[1]{\chi_{#1}}
\newcommand{\f}{f}
\newcommand{\eigenValue}{\eta}
\newcommand{\eigenFunction}{e}
\newcommand{\KLRV}{Z}
\newcommand{\KLindex}{N}
\newcommand{\approxGaussField}{\gaussField^{\KLindex}_{\bbR}}
\newcommand{\maternAbbrev}{M}
\newcommand{\testFunction}{\varphi}
\newcommand{\variance}{\sigma^2}
\newcommand{\smoothness}{\nu}
\newcommand{\correlationLength}{\rho}
\newcommand{\besselFunction}{K_{\smoothness}}
\newcommand{\discretization}{\Delta}
\newcommand{\spaceGrid}{\spaceDomain_{\discretization}}
\newcommand{\gridPoint}[1]{\spaceVar_{#1}}
\newcommand{\numGridPoints}{N_{\spaceVar}}
\newcommand{\gridCell}[1]{\cX_{#1}}
\newcommand{\spaceIdx}{i}
\newcommand{\spaceStepSize}[1]{\discretization_{\spaceVar}^{#1}}
\newcommand{\timeGrid}{\timeInterval_{\discretization}}
\newcommand{\timePoint}[1]{\timeVar^{#1}}
\newcommand{\numTimePoints}{N_{\timeVar}}
\newcommand{\timeStepSize}[1]{\discretization_{\timeVar}^{#1}}
\newcommand{\timeIdx}{n}
\newcommand{\discontinuityInterface}[1]{d_{#1}}
\newcommand{\waveCell}[1]{\mathfrak{w}_{#1}}
\newcommand{\CFLconst}{L_u^n}
\newcommand{\approxSolution}[2]{\tilde{\solution}^{#1}_{#2}}
\newcommand{\approxFlux}{F}
\newcommand{\inFlux}[2]{\approxFlux^{#1}_{#2-\frac{1}{2}}}
\newcommand{\outFlux}[2]{\approxFlux^{#1}_{#2+\frac{1}{2}}}
\newcommand{\optVar}{\theta}
\newcommand{\DeclareAutoPairedDelimiter}[3]{%
	\expandafter\DeclarePairedDelimiter\csname Auto\string#1\endcsname{#2}{#3}%
	\begingroup\edef\x{\endgroup
		\noexpand\DeclareRobustCommand{\noexpand#1}{%
			\expandafter\noexpand\csname Auto\string#1\endcsname*}}%
	\x}
\DeclareAutoPairedDelimiter{\abs}{\lvert}{\rvert}
\DeclareAutoPairedDelimiter{\norm}{\lVert}{\rVert}
\DeclareAutoPairedDelimiter{\set}{\{}{\}}
\DeclareMathOperator{\sign}{sign}
\newcommand{\timeDeriv}{\frac{\partial}{\partial \timeVar}}
\newcommand{\spaceDeriv}{\frac{\partial}{\partial \spaceVar}}
\renewcommand{\d}[1]{\ensuremath{\operatorname{d}\!{#1}}}
\newcommand{\cA}{\mathcal{A}}
\newcommand{\cB}{\mathcal{B}}
\newcommand{\cD}{\mathcal{D}}
\newcommand{\cE}{\mathcal{E}}
\newcommand{\cN}{\mathcal{N}}
\newcommand{\cT}{\mathcal{T}}
\newcommand{\cU}{\mathcal{U}}
\newcommand{\cX}{\mathcal{X}}
\newcommand{\bbE}{\mathbb{E}}
\newcommand{\bbN}{\mathbb{N}}
\newcommand{\bbP}{\mathbb{P}}
\newcommand{\bbQ}{\mathbb{Q}}
\newcommand{\bbR}{\mathbb{R}}
\newcommand{\bbT}{\mathbb{T}}
\newcommand{\bbX}{\mathbb{X}}
\title{Scalar conservation laws with stochastic discontinuous flux function}
\newcommand{\specificthanks}[1]{\@fnsymbol{#1}}% Inserts a specific \thanks symbol
\author{
	Lukas Brencher\textsuperscript{\specificthanks{2},}\thanks{Corresponding author. 
		(\href{mailto:lukas.brencher@mathematik.uni-stuttgart.de}{lukas.brencher@mathematik.uni-stuttgart.de})}
	\and
	Andrea Barth\thanks{University of Stuttgart, Allmandring 5b, 70569 Stuttgart, Germany}
}
\begin{document}
\maketitle
 
\begin{abstract}
	A variety of real-world applications are modeled via hyperbolic conservation laws.
	To account for uncertainties or insufficient measurements, random coefficients may be incorporated.
	These random fields may depend discontinuously on the state space, e.g., to represent permeability in a heterogeneous or fractured medium.
	We introduce a suitable admissibility criterion for the resulting stochastic discontinuous-flux conservation law and prove its well-posedness. 
	Therefore, we ensure the pathwise existence and uniqueness of the corresponding deterministic setting and present a novel proof for the measurability of the solution, since classical approaches fail in the discontinuous-flux case.
	As an example of the developed theory, we present a specific advection coefficient, which is modeled as a sum of a continuous random field and a pure jump field.
	This random field is employed in the stochastic conservation law, in particular a stochastic Burgers' equation, for numerical experiments.
	We approximate the solution to this problem via the Finite Volume method and introduce a new meshing strategy that accounts for the resulting standing wave profiles caused by the flux-discontinuities.
	The ability of this new meshing method to reduce the sample-wise variance is demonstrated in numerous numerical investigations.
\end{abstract}
\begin{keywords}
	Stochastic conservation laws, discontinuous flux function, stochastic entropy solution, Finite Volume method, non-continuous random fields, fractured media, jump-advection coefficient
\end{keywords}
\begin{AMS}
	65M55, 60G60, 60H15, 35R60, 60H35, 65C30, 46N20, 35B30, 35F20, 35F25, 35L02, 35L03, 35L65
\end{AMS}

\section{Introduction}

Conservation laws are widely used to model problems in various applications, such as fluid and gas dynamics \cite{Andreianov2014Approximating, Jaffre1996Numerical}, hydrodynamics \cite{Chen2008Hyperbolic}, nonlinear elasticity, semiconductor simulation and many more. We refer to the books of Dafermos \cite{Dafermos2000Hyperbolic} and LeVeque \cite{LeVeque1992Numerical} and the references therein for more applications. 
There are two natural extensions of models based on conservation laws: 
\begin{enumerate}[(i)]
	\item One may want to allow the flux function to depend (possibly discontinuously) on the state space, e.g., to model heterogeneities or fractures in a porous medium for subsurface or two-phase flow simulations \cite{Andreianov2014Approximating, Jaffre1996Numerical, Gimse1992Solution}.
	This approach is also used for many other models in a variety of fields, e.g.,  vehicular traffic flows \cite{Andreianov2010Finite, Coclite2005Traffic}, statistical mechanics \cite{Chen2008Hyperbolic}, sedimentation \cite{Buerger2004Well} and kidney physiology \cite{Perthame2014simple}.
	\item One might be interested in including stochastic terms, e.g., a stochastic flux function \cite{Holden1997Conservation, Lions2013Scalar, Lions2014Scalar} or a stochastic source \cite{Karlsen2017stochastic, Kim2003stochastic}, to account for uncertainties or insufficient data in the underlying model.
\end{enumerate}
To combine both extensions, we consider a stochastic conservation law, where the flux function depends discontinuously on the state space. 
In this paper, we are concerned with well-posedness of the solution to such a stochastic conservation law. 
A special case of the considered stochastic conservation law are flux functions, in which a discontinuous random coefficient is incorporated. 
This random coefficient is constructed via a continuous part and a discontinuous part, which is inspired by the unique characterization of Lévy processes via the Lévy-Khintchine formula \cite{Applebaum2009Levy, Sato1999Levy}. By this formula, every Lévy process can be seen as the composition of three independent components, namely, a drift term, a Brownian motion and a pure jump process.

The paper is structured as follows: 
In the remainder of this section, we discuss existing results on conservation laws with discontinuous flux function in Section \ref{ssec:intro_discontinuous_CLs}, before stating existing results on stochastic conservation laws in Section \ref{ssec:intro_SCLs}.
In the following section, we state the considered problem and show our main result: the well-posedness of pathwise solutions.
In Section \ref{sec:stochJumpCoeff}, we define a random coefficient and its numerical approximation.
Afterwards, we introduce a numerical discretization of the stochastic conservation law with discontinuous random coefficient and investigate the convergence of the approximations in numerical experiments.

\subsection{Conservation laws with spatial discontinuities}
\label{ssec:intro_discontinuous_CLs}

We are interested in scalar conservation laws with a flux that depends discontinuously on the spatial variable.
For $t \in \bbT$, where $\bbT \subset \bbR$ denotes the time interval, the conservation law is given by 
\begin{equation}
	\label{eq:deterministicConservationLaw}
	\begin{aligned}
		\solution_{\timeVar} + \fluxFunction(\spaceVar, \solution)_{\spaceVar} &= 0 &&\text{in } \bbR \times \timeInterval \ , \\
		\solution(\spaceVar, 0) &= \initialCond (\spaceVar) &&\text{in } \bbR \ ,
	\end{aligned}
\end{equation}
where $\initialCond (\spaceVar) \in L^{\infty}(\bbR)$ denotes the initial condition of the partial differential equation.
Furthermore, the flux $\fluxFunction$ is assumed to depend discontinuously on space. 
In case the flux function $\fluxFunction$ is sufficiently smooth, well-posedness of \eqref{eq:deterministicConservationLaw} is well established \cite{Bouchut1998Krukovs, Kruzkov1970First}.
However, even with smooth initial data and a sufficiently smooth flux function, the solution might develop discontinuities.
Therefore, weak solutions are sought, but they are not unique without an additional admissibility criterion selecting the physical solution.
In case of a smooth space dependency of the flux $\fluxFunction$ the entropy condition was described by Kru\v{z}kov \cite{Kruzkov1970First}.
A function $\solution \in L^{\infty}(\bbR \times \timeInterval; \bbR)$ is an entropy solution of \eqref{eq:deterministicConservationLaw} if for all $\entropyConstant \in \bbR$ it satisfies the following inequality in the distributional sense:
\begin{equation}
	\label{eq:KruzkovEntropyCondition}
	\timeDeriv \abs{\solution - \entropyConstant} + \spaceDeriv \left( \sign(\solution - \entropyConstant) (\fluxFunction(\spaceVar, \solution) - \fluxFunction(\spaceVar, \entropyConstant)) \right) + \sign(\solution - \entropyConstant) \spaceDeriv \fluxFunction(\spaceVar, \entropyConstant) \leq 0 \ .
\end{equation}
For discontinuous flux functions, the Kru\v{z}kov entropy condition \eqref{eq:KruzkovEntropyCondition} does not make sense due to the term $\sign(\solution - \entropyConstant) \spaceDeriv \fluxFunction(\spaceVar, \entropyConstant)$.
To overcome this difficulty, a variety of criteria were introduced to select the appropriate weak solution. 
However, for some problems, different selection criteria might lead to different solutions.
Therefore, the solution we are seeking might depend on the context.

A common approach to define a selection criterion is to require the usual entropy condition, when the flux is continuous, and impose additional constraints at the discontinuities (see, e.g., \cite{Adimurthi2005Godunov, Bulicek2011scalar, Ghoshal2011Existence}).
A natural choice of such a condition is the continuity of the flux at the heterogeneities,  given by the Rankine-Hugoniot condition
\begin{equation}
	\label{eq:Rankine-Hugoniot}
	\fluxFunction(x^-, u(x^-, t)) = \fluxFunction(x^+, u(x^+, t)) \quad t \in \bbT .
\end{equation}
However, this condition is not sufficient to select an unique solution.
Note that for discontinuous flux functions this procedure requires the existence of traces along the discontinuities, which implies the spatial points to occur at isolated points.
%We will focus on two formulations here, which both are based on removing the problematic term in \eqref{eq:KruzkovEntropyCondition}.
For an exhaustive review of different selection criteria, the reader is referred to \cite{Andreianov2011theory}, where each existing admissibility criterion is associated to a so-called \textit{Germ}.

%\subsubsection{Panov formulation}
%
%One formulation to select a unique solution was introduced by Towers \cite{towers2000convergence} in 2000, based on a formulation of Klingenberg and Risebro \cite{klingenberg1995convex}.
%This theory only considers flux functions of the form $\fluxFunction(x, u) = a(x) f(u)$, where $f(u) = u (1 - u)$ and $a(x)$ piecewise constant with a finite number of jumps.
%Towers proved the uniqueness of piecewise smooth entropy solutions.
%Further, he showed existence of this solution via the convergence of a Godunov scheme based on the discretizations of $a(x)$ and $u$ being staggered with respect to each other.
%This framework was extended to non-separated variables by Panov in \cite{panov2010existence} under the following assumptions:
%\input{./theorems/assumption_Panov-formulation.tex}
%Note, Assumption \ref{as:P-2} is crucial for guaranteeing the existence of traces.
%
%The existence result in \cite{panov2010existence} was proved for multidimensional conservation laws via convergence of measure-valued solutions. 
%We note that there exist a variety of numerical schemes for computing solutions of this formulation, based on the specific form of the considered flux function, e.g., \cite{burger2009engquist, burger2008family, towers2016convergence}.

\subsubsection{Audusse-Perthame adapted entropy formulation}

The theory of adapted-entropy solutions for conservation laws was introduced by Audusse and Perthame \cite{Audusse2005Uniqueness} in 2005, based on the ideas of Baiti and Jenssen \cite{Baiti1997Well} from 1997.
The provided framework is formulated, such that no additional interface conditions are imposed. 
As a result, the existence of traces is not required, since the discontinuity points are not distinguished separately. This makes it possible to include an infinite number of discontinuity points in the flux function.

The main idea of the framework is to adapt the classical Kru\v{z}khov entropy condition \eqref{eq:KruzkovEntropyCondition} in such a way that the troublesome term vanishes.
Therefore, instead of the classical Kru\v{z}khov \cite{Kruzkov1970First} entropies $(\abs{\solution - \entropyConstant})_{\entropyConstant \in \bbR}$, adapted entropies $(\abs{\solution - \steadyStateSolution{}})_{\steadyStatePar \in \bbR}$ are considered.
Here, $\steadyStateSolution{}$ denotes the unique solution of the problem
\begin{equation}
	\label{eq:steadyStateFlux}
	\fluxFunction(\spaceVar, \steadyStateSolution{}) = \steadyStatePar \quad \text{for a.e. }\spaceVar \in \bbR \ .
\end{equation}
The existence and uniqueness of $\steadyStateSolution{}$ in \eqref{eq:steadyStateFlux} is guaranteed by the following assumptions:
\begin{assumption}[Assumption on the flux function: adapted entropy formulation]
	\label{as:deterministicFlux}
	\begin{enumerate}[label=(A-\arabic*)]
		\item \label{as:A-1} The flux function $\fluxFunction(\spaceVar, \rho)$ is continuous at all points of $(\bbR \setminus \nullSet) \times \bbR$, where $\nullSet \subset \bbR$ is a closed set of measure zero that contains the discontinuity points of $\fluxFunction(\cdot, \rho)$.
		\item \label{as:A-2} There exist two functions $\subsolution, \supersolution \in C^{0}(\bbR)$ such that for all $\spaceVar \in \bbR$ it holds that $\subsolution (\rho) \leq |\fluxFunction(\spaceVar, \rho)| \leq \supersolution (\rho)$, where $\subsolution$ is a non-negative (non-strictly) decreasing then increasing function which satisfies $|\subsolution(\pm \infty)| = +\infty$.
		\item \label{as:A-3} There exists a function $\modalityPointFct (x): \bbR \rightarrow \bbR$ and a constant $\imageBound$ such that for $x \in \bbR \setminus \nullSet$, $\fluxFunction(\spaceVar, \cdot)$ is a locally Lip\-schitz one-to-one function from $(-\infty, \modalityPointFct(x)]$ and $[\modalityPointFct(x), +\infty)$ to $[\imageBound, +\infty)$ (or $(-\infty, \imageBound]$) that satisfies $\fluxFunction( \spaceVar, \modalityPointFct(x)) = 0$ and with common Lipschitz constant $L_I$ for all $x \in \bbR \setminus \nullSet$ and all $\solution \in I$, where $I \subset \bbR$ is any bounded interval.
	\end{enumerate}
	Alternatively, instead of \ref{as:A-3} we may consider the assumption
	\begin{enumerate}
		\item[\mylabel{as:A-3alt}{(A-3')}] For $\spaceVar \in \bbR \setminus \nullSet$, the flux function $\fluxFunction (\spaceVar, \cdot)$ is a locally Lipschitz one-to-one function from $\bbR$ to $\bbR$ with common Lipschitz constant $L_I$ for all $x \in \bbR \setminus \nullSet$ and all $\solution \in I$, where $I \subset \bbR$ is any bounded interval.
	\end{enumerate}
\end{assumption}

In case the flux function satisfies Assumptions \ref{as:A-1}\--\ref{as:A-3}, for any constant $\steadyStatePar \in [\imageBound, \infty)$ (or $\alpha \in (-\infty, \imageBound]$), there exist two steady-state solutions $\steadyStateSolution{+}$ from $\bbR$ to $[\modalityPointFct (x), \infty)$ and $\steadyStateSolution{-}$ from $\bbR$ to $(-\infty, \modalityPointFct (x)]$ of \eqref{eq:deterministicConservationLaw} satisfying \eqref{eq:steadyStateFlux}.
In case of Assumptions \ref{as:A-1}\--\ref{as:A-3alt}, the two steady state solutions coincide, which is even simpler.

Now, the adapted entropies $(\abs{\solution - \steadyStateSolution{}})_{\steadyStatePar \in \bbR}$ allow us to define adapted entropy solutions.
\begin{definition}[adapted entropy solution]
	\label{def:adaptedEntropySolution}
	A function $\solution \in L^{\infty}(\bbR \times \timeInterval) \cap C^{0}(\timeInterval, L^{1}_{\text{loc}}(\bbR))$ is an adapted entropy solution of \eqref{eq:deterministicConservationLaw} on $\bbR \times \timeInterval$, provided that for each $\steadyStatePar \in [\imageBound, \infty)$ (or $\alpha \in (-\infty, \imageBound]$) and the corresponding two steady-state solutions $\steadyStateSolutions$ of \eqref{eq:deterministicConservationLaw}, the following inequality holds in the sense of distributions:
	\begin{equation}
		\label{eq:adaptedEntropyCondition}
		\timeDeriv \abs{\solution(\spaceVar, \timeVar) - \steadyStateSolutions} + \spaceDeriv \left( \sign \left( \solution(\spaceVar, \timeVar) - \steadyStateSolutions \right) (\fluxFunction(\spaceVar, \solution(\spaceVar, \timeVar)) - \fluxFunction(\spaceVar, \steadyStateSolutions)) \right) \leq 0 \ .
	\end{equation}
\end{definition}

This formulation guarantees the uniqueness of solutions, since the \textit{doubling of variables} approach can be applied.
The existence was shown in \cite{Chen2008Hyperbolic} via the reduction of measure-valued solutions.
However, to the best of the author's knowledge, extending the adapted entropy formulation to multidimensional conservation laws is still an open problem.

\subsection{Stochastic conservation laws}
\label{ssec:intro_SCLs}

The field of stochastic conservation laws has attracted considerable attention in the past 20 years.
The concept of stochastic weak and entropy solutions was initially introduced by Holden and Risebro in \cite{Holden1997Conservation}, who considered nonlinear hyperbolic problems with time-dependent stochastic source terms.
This concept was further developed by Kim in \cite{Kim2003stochastic} and extended to conservation laws with multiplicative noise, see \cite{Feng2008Stochastic} for Gaussian and \cite{Biswas2015Conservation} for L\'{e}vy noise.
In \cite{Bauzet2012Cauchy}, Bauzet et al. used the concept of Kru\v{z}kov's entropy formulation and measure-valued solutions to establish existence and uniqueness of entropy solutions for multi-dimensional nonlinear conservation laws with multiplicative stochastic perturbation.
Recently, Karlsen and Storr{\o}sten generalized this approach by allowing the stochastic Kru\v{z}kov entropy condition to contain Malliavin differentiable random variables \cite{Karlsen2017stochastic}.
They also provided existence and uniqueness results for this extended framework.

In 2013 Lions \cite{Lions2013Scalar} considered scalar conservation laws with rough stochastic fluxes and extended this in \cite{Lions2014Scalar} to the case, where the rough stochastic flux function is space dependent.
The field of scalar conservation laws with rough fluxes combined with stochastic forcing was also considered by Hofmanov\'{a} in \cite{Hofmanova2016Scalar}, who proved well-posedness for the corresponding kinetic formulation and its solution.

Stochastic systems of conservation laws were considered by Mishra and Schwab in \cite{Mishra2012Sparse}, who introduced the uncertainty via random initial data.
In \cite{Poette2009Uncertainty} Po{\"e}tte et al. considered hyperbolic systems with a randomly perturbed flux function and showed the hyperbolicity of a stochastic Galerkin representation.
The stochastic Galerkin representation was later extended by \cite{Tryoen2012Adaptive} and \cite{Buerger2014hybrid}.
A comprehensive discussion on current numerical methods for hyperbolic systems of conservation laws can be found in \cite{Abgrall2017Uncertainty}.

In \cite{Mishra2016Numerical}, Mishra et al. considered the numerical approximation of scalar conservation laws in several space dimensions with random flux functions and also showed convergence analysis for the multilevel Monte Carlo finite volume method \cite{Mishra2012Sparse, Mishra2012Multi, Mishra2012Multilevel}.
Another numerical approximation was introduced by Risebro et. al. \cite{Risebro2016Multilevel}, who proposed a multilevel Monte Carlo front tracking algorithm for stochastic scalar conservation laws with bounded random flux function.
This approach was generalized in \cite{Koley2017multilevel} to the case of random degenerate convection-diffusion equations.

Recently, Barth and Stein \cite{Barth2019stochastic} considered semilinear hyperbolic stochastic partial differential equations, which are driven by L\'{e}vy noise and provided a fully discrete numerical approximation for this problem.

\section{Stochastic scalar conservation laws with discontinuous flux functions}

Let $\probSpace$ be a complete probability space and let $\timeInterval := [0, \stopTime] \subset \bbR_{\geq 0}$ be a time interval for some $\stopTime \in \bbR_{>0}$.
We consider a scalar, one-dimensional conservation law of the form
\begin{equation}
	\label{eq:stochasticConservationLaw}
	\begin{aligned}
		\solution_{\timeVar} + \fluxFunction(\stochVar, \spaceVar, \solution)_{\spaceVar} &= 0 &&\text{in } \stochDomain \times \bbR \times \timeInterval \ , \\
		\solution(\stochVar, \spaceVar, 0) &= \initialCond (\stochVar, \spaceVar) &&\text{in } \stochDomain \times \bbR \ ,
	\end{aligned}
\end{equation}
where $\initialCond \in L^p(\stochDomain, L^{\infty}(\bbR))$ denotes the stochastic initial condition of the partial differential equation.
We impose the following assumptions on the stochastic flux function:

\begin{assumption}[Pathwise assumption on the stochastic flux function]
	\label{as:stochFlux}
	We assume the stochastic discontinuous flux function to satisfy the following assumptions for every $\stochVar \in \stochDomain$:
	\begin{enumerate}[label=(B-\arabic*)]
		\item \label{as:B-1} The flux function $\fluxFunction(\stochVar, \cdot, \cdot)$ is continuous at all points of $(\bbR \setminus \nullSet(\stochVar)) \times \bbR$ with $\nullSet(\stochVar) \subset \bbR$ a closed set of measure zero that contains the discontinuity points of $\fluxFunction (\stochVar, \cdot, \rho)$.
		\item \label{as:B-2} There exist two functions $\subsolution (\stochVar, \cdot), \supersolution (\stochVar, \cdot) \in C^{0}(\bbR)$ such that for all $\spaceVar \in \bbR$ it holds that $\subsolution (\stochVar, \rho) \leq |\fluxFunction(\stochVar, \spaceVar, \rho)| \leq \supersolution (\stochVar, \rho)$, where $\subsolution (\stochVar, \cdot)$ is a decreasing then increasing function with $\subsolution(\stochVar, \rho) \geq 0$ and $|\subsolution(\stochVar, \pm \infty)| = +\infty$.
		\item \label{as:B-3} There exists a function $\modalityPointFct (\stochVar, \spaceVar): \stochDomain \times \bbR \rightarrow \bbR$ and a constant $\imageBound \in \bbR$ such that for  $x \in \bbR \setminus \nullSet(\stochVar)$, the flux function $\fluxFunction(\stochVar, \spaceVar, \cdot)$ is locally Lipschitz and one-to-one from $(-\infty, \modalityPointFct (\stochVar, \spaceVar)]$ and $[\modalityPointFct(\stochVar, \spaceVar), +\infty)$ to $[\imageBound, +\infty)$ (or $(-\infty, \imageBound]$). Further it satisfies $\fluxFunction(\stochVar, \spaceVar, \modalityPointFct(\stochVar, \spaceVar)) = \imageBound$ with common Lipschitz constant $\fluxLipschitzConstant$ for all $\spaceVar \in \bbR \setminus \nullSet(\stochVar)$ and all $\solution \in \lipschitzInterval$, where $\lipschitzInterval \subset \bbR$ is any bounded interval.
	\end{enumerate}
	
	Alternatively, instead of \ref{as:B-3} we may consider the assumption
	\begin{enumerate}
		\item[\mylabel{as:B-3alt}{(B-3')}] For $\spaceVar \in \bbR \setminus \nullSet (\stochVar)$, the flux function $\fluxFunction (\stochVar, \spaceVar, \cdot)$ is locally Lipschitz and one-to-one from $\bbR$ to $\bbR$ with common Lipschitz constant $\fluxLipschitzConstant$ for all $\spaceVar \in \bbR \setminus \nullSet(\stochVar)$ and all $\solution \in \lipschitzInterval$, where $\lipschitzInterval \subset \bbR$ is any bounded interval.
	\end{enumerate}
\end{assumption}

\begin{example}[Multiplicative flux]
	\label{ex:multiplicativeFlux}
	
	One example of stochastic flux functions satisfying Assumption \ref{as:stochFlux} is
	\begin{equation}
		\label{eq:multiplicativeFlux}
		\fluxFunction(\stochVar, \spaceVar, \rho) = \jumpCoeff (\stochVar, \spaceVar) \solFlux (\rho) .
	\end{equation}
	Here, $\jumpCoeff(\stochVar, \spaceVar)$ is continuous in $\spaceVar \in \bbR$ except on a closed set $\nullSet$ of measure zero that might depend on $\stochVar \in \stochDomain$ (assuring Assumption \ref{as:B-1}).
	Further, the coefficient $\jumpCoeff(\stochVar, \spaceVar)$ has positive spatially bounded paths, i.e., $0 < a_{-}(\stochVar) \leq \jumpCoeff(\stochVar, \spaceVar) \leq a_{+}(\stochVar) < \infty$ for some constants $a_{-}$ and $a_{+}$ that might depend on $\stochVar \in \stochDomain$ (assuring Assumption \ref{as:B-2}).
	The flux function $\solFlux (\rho)$ is locally Lipschitz continuous and is either monotone (Assumption \ref{as:B-3alt}) or convex (or concave) with $\solFlux(\modalityPointFct (\stochVar, \spaceVar)) = 0$ for some $\modalityPointFct$, in which case we have $\imageBound = 0$ (Assumption \ref{as:B-3}).
\end{example}

\begin{remark}[Existence of stochastic steady state solutions]
	\label{rem:stochFluxAssumptions}
	
	To define a meaningful entropy solution, we have to ensure the well-posedness of the stochastic steady state equation
	\begin{equation}
		\label{eq:stochSteadyState}
		\fluxFunction(\stochVar, \spaceVar, \stochSteadyStateSolutions) = \steadyStatePar \quad \text{for a.e. }\spaceVar \in \bbR \ .
	\end{equation}
	If assumptions \ref{as:B-1}\--\ref{as:B-3} are satisfied, then for any constant $\steadyStatePar \in [\imageBound, \infty)$ (or $\steadyStatePar \in (-\infty, \imageBound]$) and any $\stochVar \in \stochDomain$, there exist two steady state solutions $\stochSteadyStateSolution{+}: \stochDomain \times \bbR \rightarrow [\imageBound, \infty)$ and $\stochSteadyStateSolution{-}: \stochDomain \times \bbR \rightarrow (- \infty, \imageBound]$ of \eqref{eq:stochasticConservationLaw} satisfying \eqref{eq:stochSteadyState}.
	For the case \ref{as:B-1}\--\ref{as:B-3alt} the two steady state solutions coincide, i.e., $\stochSteadyStateSolutionArg{+} = \stochSteadyStateSolutionArg{-}$.
\end{remark}

The steady state solutions defined in Remark \ref{rem:stochFluxAssumptions} allow us to introduce the partially adapted Kru\v{z}khov entropies $\partAdaptedEntropies{\steadyStatePar}(\stochVar, \spaceVar, \solution) = |\solution - \stochSteadyStateSolutions|$, which lead to the notion of pathwise adapted entropy solutions.
\begin{definition}[Pathwise adapted entropy solution]
	Let $\stochVar \in \stochDomain$ be fixed.
	A function $\solution(\stochVar, \cdot, \cdot) \in L^{\infty}(\timeInterval \times \bbR) \cap C^{0}(\timeInterval, L^{1}_{\text{loc}}(\bbR))$ is an adapted entropy solution of \eqref{eq:stochasticConservationLaw} on $\bbR \times \timeInterval$, provided that for each $\steadyStatePar \in [\imageBound, \infty)$ (or $\steadyStatePar \in (-\infty, \imageBound]$) and the corresponding two steady-state solutions $\stochSteadyStateSolutions$ of \eqref{eq:stochSteadyState}, the inequality
	\begin{equation}
		\label{eq:adaptedEntropyCondition_integralForm}
		\begin{aligned}
			&\int \Big| \solution(\stochVar, \spaceVar, \timeVar) - \stochSteadyStateSolutions \Big| \timeDeriv \phi(x, t) \d t \d x \\
			&\hspace*{1.5cm} + \int \sign \Big( \solution(\stochVar, \spaceVar, \timeVar) - \stochSteadyStateSolutions \Big) \Big( \fluxFunction(\stochVar, \spaceVar, \solution(\stochVar, \spaceVar, \timeVar)) - \steadyStatePar \Big) \spaceDeriv \phi(x, t) \d t \d x \\
			&\hspace*{3cm} + \int \Big| \initialCond(\stochVar, \spaceVar) - \stochSteadyStateSolutions \Big| \phi (\spaceVar, 0) \d x \geq 0
		\end{aligned}
	\end{equation}
	holds for every test function $\phi \in C^{\infty}_{c}(\bbR \times \bbT; \bbR_{\geq 0})$.
\end{definition}

As a direct consequence of Assumption \ref{as:stochFlux} and the definition of pathwise adapted entropy solutions, we can conclude the following lemma.
\begin{lemma}
	\label{lem:fluxBound}
	Let Assumption \ref{as:B-2} be satisfied and fix an arbitrary $\stochVar \in \stochDomain$. Furthermore, suppose $\solution(\stochVar, \cdot, \cdot) \in L^{\infty}(\timeInterval \times \bbR) \cap C^{0}(\timeInterval, L^{1}_{\text{loc}}(\bbR))$ is a pathwise adapted entropy solution of \eqref{eq:stochasticConservationLaw}.
	Then the following estimate holds for a.e. $\spaceVar \in \bbR$ and a.e. $\timeVar \in \timeInterval$:
	\begin{equation}
		\label{eq:fluxBound}
		\abs{\fluxFunction (\stochVar, \spaceVar, \solution(\stochVar, \spaceVar, \timeVar))} \leq \max\limits_{-\norm{\solution(\stochVar, \cdot, \cdot)}_{L^{\infty}} \leq \sigma \leq \norm{\solution(\stochVar, \cdot, \cdot)}_{L^{\infty}}} \supersolution (\stochVar, \sigma) =: M(\stochVar) < \infty \ .
	\end{equation}
\end{lemma}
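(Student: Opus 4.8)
The plan is to chain the essential-supremum bound on the solution together with the pointwise upper envelope supplied by Assumption \ref{as:B-2}, and then invoke continuity of $\supersolution(\stochVar, \cdot)$ to replace the envelope by a finite maximum. First I would record that, since $\solution(\stochVar, \cdot, \cdot) \in L^{\infty}(\timeInterval \times \bbR)$, the norm $\norm{\solution(\stochVar, \cdot, \cdot)}_{L^{\infty}}$ is finite, so that for a.e.\ $(\spaceVar, \timeVar) \in \bbR \times \timeInterval$ the value $\solution(\stochVar, \spaceVar, \timeVar)$ lies in the compact interval $[-\norm{\solution(\stochVar, \cdot, \cdot)}_{L^{\infty}}, \norm{\solution(\stochVar, \cdot, \cdot)}_{L^{\infty}}]$.

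Next I would apply the upper bound of Assumption \ref{as:B-2}, namely $\abs{\fluxFunction(\stochVar, \spaceVar, \rho)} \leq \supersolution(\stochVar, \rho)$, which holds for every $\spaceVar \in \bbR$ and every $\rho \in \bbR$. Evaluating this at $\rho = \solution(\stochVar, \spaceVar, \timeVar)$ gives, for a.e.\ $(\spaceVar, \timeVar)$,
\[
	\abs{\fluxFunction(\stochVar, \spaceVar, \solution(\stochVar, \spaceVar, \timeVar))} \leq \supersolution(\stochVar, \solution(\stochVar, \spaceVar, \timeVar)) .
\]
Since $\supersolution(\stochVar, \cdot)$ is continuous on $\bbR$ and its argument ranges over the compact interval identified in the first step, a continuous function on a compact set attains a finite maximum there; bounding the right-hand side by that maximum yields precisely $M(\stochVar)$ and simultaneously shows $M(\stochVar) < \infty$, which completes the estimate.

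There is essentially no serious obstacle here, as the argument is a direct composition of the $L^{\infty}$ bound and the growth envelope. The only point warranting care is the uniformity in the spatial variable $\spaceVar$: the estimate must not degenerate as $\spaceVar$ varies over $\bbR$. This is immediate, however, because the bound $\abs{\fluxFunction(\stochVar, \spaceVar, \cdot)} \leq \supersolution(\stochVar, \cdot)$ in Assumption \ref{as:B-2} holds for \emph{every} $\spaceVar$ with the \emph{same} envelope $\supersolution(\stochVar, \cdot)$, so the spatial variable drops out entirely after the envelope is applied, leaving a bound that depends only on $\stochVar$.
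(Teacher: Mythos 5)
Your proposal is correct and follows essentially the same argument as the paper's proof: use the $L^{\infty}$ bound on $\solution(\stochVar, \cdot, \cdot)$ to restrict the flux's third argument to a compact interval, apply the envelope $\supersolution(\stochVar, \cdot)$ from Assumption \ref{as:B-2} (which is uniform in $\spaceVar$), and invoke continuity of $\supersolution(\stochVar, \cdot)$ on that compact interval to obtain a finite maximum. Your write-up is in fact slightly more explicit than the paper's about the evaluation step and the uniformity in $\spaceVar$, but there is no substantive difference in approach.
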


\begin{proof}
	By hypothesis, we have, for fixed $\stochVar \in \stochDomain$, that $\solution (\stochVar, \cdot, \cdot) \in L^{\infty}(\timeInterval \times \bbR)$. 
	Therefore, the maximization is over a compact interval.
	Further, by assumption \ref{as:B-2}, we have the existence of an upper bound $\supersolution$ of $\fluxFunction$, which is continuous in the second argument.
	The maximization is over a compact interval, hence we obtain the existence of the maximum.
	Also, with the same argument, we have $M (\stochVar) < \infty$, which concludes the proof.
\end{proof}

We are now able to state the following pathwise existence and uniqueness result:
\begin{theorem}
	\label{thm:stochExistenceUniqueness}
	Let the flux function $\fluxFunction$ satisfy Assumption \ref{as:stochFlux}.
	Then, for every $\stochVar \in \stochDomain$, there exists a unique pathwise adapted entropy solution $\solution (\stochVar, \cdot, \cdot) \in L^{\infty}(\timeInterval \times \bbR) \cap C^{0}(\timeInterval, L^{1}_{\text{loc}}(\bbR))$ of the initial value problem \eqref{eq:stochasticConservationLaw}, which satisfies for a.e. $t \in \bbT$:
	\begin{equation}
		\label{eq:stochUniqueness}
		\int_{a}^{b} \abs{\solution(\stochVar, \spaceVar, \timeVar)} \d{x} \leq \int_{a-M(\stochVar)\timeVar}^{b+M(\stochVar)\timeVar}  \abs{\initialCond(\stochVar, \spaceVar)} \d{x} \ .
	\end{equation}
	Here, $M (\stochVar)$ is defined as in \eqref{eq:fluxBound} and $a, b \in \bbR$ are arbitrary with $a < b$.
\end{theorem}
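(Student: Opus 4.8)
The plan is to prove everything pathwise, reducing each realization to the deterministic adapted-entropy theory of Audusse and Perthame \cite{Audusse2005Uniqueness} and Chen et al. \cite{Chen2008Hyperbolic}. Fix $\stochVar \in \stochDomain$ once and for all and abbreviate $\fluxFunction_{\stochVar} := \fluxFunction(\stochVar, \cdot, \cdot)$. The first step is to observe that the frozen flux $\fluxFunction_{\stochVar}$ satisfies the deterministic Assumption \ref{as:deterministicFlux}: the pathwise hypotheses \ref{as:B-1}, \ref{as:B-2} and \ref{as:B-3} (respectively \ref{as:B-3alt}) are exactly \ref{as:A-1}, \ref{as:A-2} and \ref{as:A-3} (respectively \ref{as:A-3alt}) with the null set $\nullSet(\stochVar)$, the envelopes $\subsolution(\stochVar, \cdot), \supersolution(\stochVar, \cdot)$, the modality function $\modalityPointFct(\stochVar, \cdot)$ and the constant $\imageBound$ frozen at $\stochVar$. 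By Remark \ref{rem:stochFluxAssumptions} the steady states $\stochSteadyStateSolutions$ solving \eqref{eq:stochSteadyState} then exist for this fixed $\stochVar$, so the adapted entropies $\abs{\solution - \stochSteadyStateSolutions}$ and the notion of a pathwise adapted entropy solution are meaningful.

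For existence I would invoke the deterministic construction directly: since $\fluxFunction_{\stochVar}$ meets Assumption \ref{as:deterministicFlux}, the reduction of measure-valued solutions carried out in \cite{Chen2008Hyperbolic} yields an adapted entropy solution $\solution(\stochVar, \cdot, \cdot) \in L^{\infty}(\timeInterval \times \bbR) \cap C^{0}(\timeInterval, L^{1}_{\text{loc}}(\bbR))$ of the frozen Cauchy problem with datum $\initialCond(\stochVar, \cdot) \in L^{\infty}(\bbR)$, i.e.\ a function satisfying \eqref{eq:adaptedEntropyCondition_integralForm}. For uniqueness I would run the doubling-of-variables argument of \cite{Audusse2005Uniqueness} on the two adapted-entropy inequalities: because the entropies are built from the steady states $\stochSteadyStateSolutions$ rather than from constants, the obstructing term $\sign(\solution - \steadyStatePar)\,\spaceDeriv \fluxFunction(\spaceVar, \steadyStatePar)$ appearing in the classical Kru\v{z}kov condition \eqref{eq:KruzkovEntropyCondition} is absent, and the Kato-type inequality closes to give a pathwise $L^{1}$-stability estimate, hence uniqueness of $\solution(\stochVar, \cdot, \cdot)$.

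It remains to establish the localized bound \eqref{eq:stochUniqueness}, which I would obtain from a finite-speed-of-propagation argument driven by the flux bound of Lemma \ref{lem:fluxBound}. Into the adapted-entropy inequality \eqref{eq:adaptedEntropyCondition_integralForm} I would insert a test function $\phi$ approximating the indicator of the truncated cone
\[
\set{(\spaceVar, s) : a - M(\stochVar)(\timeVar - s) \leq \spaceVar \leq b + M(\stochVar)(\timeVar - s), \ 0 \leq s \leq \timeVar} ,
\]
chosen so that its two lateral faces travel with speed $M(\stochVar)$. On these faces the time- and space-derivative contributions combine into an expression of the form $M(\stochVar)\,\abs{\solution - \stochSteadyStateSolutions} \pm \sign(\solution - \stochSteadyStateSolutions)(\fluxFunction(\stochVar, \spaceVar, \solution) - \steadyStatePar)$, whose sign is controlled by the flux magnitude bound $\abs{\fluxFunction(\stochVar, \spaceVar, \solution)} \leq M(\stochVar)$ from \eqref{eq:fluxBound}; the lateral terms then carry the favorable sign and can be discarded. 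Passing to the characteristic-function limit over the cone, with the steady-state reference specialized so that both sides reduce to $\abs{\solution}$ and $\abs{\initialCond}$, delivers \eqref{eq:stochUniqueness} for a.e.\ $\timeVar \in \timeInterval$ and arbitrary $a < b$.

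The main obstacle I anticipate is this final step: making the cone/tent argument rigorous within the discontinuous-flux, adapted-entropy framework. One must justify that the lateral boundary terms have the correct sign using the flux control $M(\stochVar)$ of Lemma \ref{lem:fluxBound} rather than a pointwise wave speed, handle the interplay with the measure-zero discontinuity set $\nullSet(\stochVar)$ where no traces are available (which is precisely why the adapted rather than Kru\v{z}kov entropies are indispensable), and carefully pass from the adapted entropies $\abs{\solution - \stochSteadyStateSolutions}$ to $\abs{\solution}$ in the limit. Everything else is a faithful transcription of the deterministic existence and uniqueness results applied realization by realization.
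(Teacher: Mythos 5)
Your proposal is correct and follows essentially the same route as the paper: fix $\stochVar$, observe the frozen flux satisfies the deterministic hypotheses, obtain existence from the measure-valued-solution reduction of \cite{Chen2008Hyperbolic}, uniqueness from the doubling-of-variables argument of \cite{Audusse2005Uniqueness}, and read off \eqref{eq:stochUniqueness} from the resulting deterministic theory. The only difference is presentational: the paper treats the localized $L^1$ bound as an immediate byproduct of the cited uniqueness/stability result, whereas you sketch the underlying cone test-function argument with the flux bound $M(\stochVar)$ of Lemma \ref{lem:fluxBound} explicitly.
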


\begin{proof}
	Let $\stochVar \in \stochDomain$ be fixed. 
	The existence of a pathwise adapted entropy solution is proved similarly as for the deterministic case, see \cite{Chen2008Hyperbolic}.
	The proof is based on the reduction of measure-valued solutions to entropy solutions in $L^{\infty}$ and the consideration of a mollified version of the problem. \\
	For the uniqueness, which was shown in \cite{Audusse2005Uniqueness}, the use of adapted entropies allows the argumentation via the "doubling of variables" procedure of Kru\v{z}khov \cite{Kruzkov1970First}.
	Then, estimate \eqref{eq:stochUniqueness} is an immediate consequence of the uniqueness result.
\end{proof}

Note, the uniqueness result in Theorem \ref{thm:stochExistenceUniqueness} immediately implies that $\solution (\stochVar, \cdot, \timeVar) \in L^1(\bbR)$ for fixed $\timeVar \in \timeInterval$, if $\initialCond (\stochVar, \cdot) \in L^1(\bbR)$.
Furthermore, since we assumed $\initialCond \in L^{p}(\stochDomain; L^{\infty}(\bbR))$, we immediately obtain the existence of the $p$-th moment of the stochastic entropy solution via Inequality \eqref{eq:stochUniqueness}.

\subsection{Measurability of the solution}

In Theorem \ref{thm:stochExistenceUniqueness}, we have shown the existence and uniqueness of pathwise adapted entropy solutions. 
To complete the well-posedness investigation, it remains to show the measurability of the solution map $\solution: \stochDomain \rightarrow \solutionSpace$.
The following assumptions will allow us to prove the measurability of the solution map.
\begin{assumption}[Measurability of the stochastic flux function]
	\label{as:stochFlux_measurability}
	\begin{enumerate}[label=(B-\arabic*)]
		\item[\mylabel{as:B-4}{(B-4)}] For almost every $\spaceVar \in \bbR$, the mapping $\stochVar \mapsto \fluxFunction(\stochVar, \spaceVar, \rho)$ is measurable.
		\item[\mylabel{as:B-5}{(B-5)}] For almost every $\spaceVar \in \bbR$, the mapping $\stochVar \mapsto \invFlux(\stochVar, \spaceVar, \rho)$ is measurable, where $\invFlux(\stochVar, \spaceVar, \cdot)$ is the inverse function of $\fluxFunction(\stochVar, \spaceVar, \cdot)$.
		\item[\mylabel{as:B-6}{(B-6)}] The spatial paths of $\invFlux$ are bounded in the sense that, for fixed $\stochVar \in \stochDomain, \rho \in \bbR$, we have $\invFlux(\stochVar, \cdot, \rho) \in L^{\infty}(\bbR)$.
	\end{enumerate}
\end{assumption}
\begin{remark}
	\label{rem:measurabilityAssumption}
	
	The existence of the function $\invFlux$ is already guaranteed by Assumption \ref{as:stochFlux}, see also Remark \ref{rem:stochFluxAssumptions}.
\end{remark}
\begin{example}[Measurability of multiplicative flux]
	\label{ex:multiplicativeFlux_measurability}
	
	Considering again the flux defined in Example \ref{ex:multiplicativeFlux}, the Assumption \ref{as:B-4} reduces to the following: 
	For almost every $x \in \bbR$, the mapping $\stochVar \mapsto \jumpCoeff (\stochVar, \spaceVar)$ is measurable. \\
	For Assumption \ref{as:B-5}, we have
	\begin{equation}
		\invFlux(\stochVar, \spaceVar, \rho) = \frac{1}{\jumpCoeff(\stochVar, \spaceVar)} \solFlux^{-1}(\rho) .
	\end{equation}
	Since $\solFlux^{-1}(\rho)$ is continuous, it is especially measurable and thus, $\invFlux(\stochVar, \spaceVar, \rho)$ is measurable as the composition of two measurable functions.
	Consequently, for multiplicative flux functions satisfying Assumptions \ref{as:B-1}\--\ref{as:B-2} and \ref{as:B-3} or \ref{as:B-3alt}, together with Assumption \ref{as:B-4}, Assumption \ref{as:B-5} is automatically satisfied.
	Finally, since we have $0 < a_{-}(\stochVar) \leq \jumpCoeff(\stochVar, \spaceVar) \leq a_{+}(\stochVar) < \infty$, the spatial paths of $\invFlux$ are bounded.
\end{example}

To show the measurability of the solution map $\solution: \stochDomain \rightarrow \solutionSpace$, we will need the following \textit{adapted entropy functional}.
For the sake of notational simplicity, we define the function space $\cX \coloneqq \solutionSpace$.
Note that $\cX$ is separable, since $L^{1}_{\text{loc}}(\bbR)$ is separable (due to $L^{1}(\bbR)$ being separable and a dense subspace of $L^{1}_{\text{loc}}(\bbR)$) and $\timeInterval$ is compact.
\begin{definition}[Adapted entropy functional]
	\label{def:entropyFunctional}
	
	Let Assumption \ref{as:stochFlux} be satisfied. 
	Further, let $\alpha \in [\imageBound, \infty)$ (or $(-\infty, \imageBound]$) be fixed with corresponding steady state solutions $\stochSteadyStateSolution{\pm}$ defined by Equation \eqref{eq:stochSteadyState}.
	Then, the adapted entropy functional $J^{\alpha}: \Omega \times \cX \rightarrow \bbR$ is defined as
	\begin{subequations}
		\label{eq:entropyFunctional}
		\begin{align}
			(\stochVar, \nu) &\mapsto \int \abs{\nu(\spaceVar, \timeVar) - \stochSteadyStateSolutions} \timeDeriv \phi(x, t) \d t \d x \label{eq:entropyIntegral-1}\\
			&\qquad+ \int \sign (\nu(\spaceVar, \timeVar) - \stochSteadyStateSolutions) (\fluxFunction(\stochVar, \spaceVar, \nu(\spaceVar, \timeVar)) - \steadyStatePar) \spaceDeriv \phi(x, t) \d t \d x \label{eq:entropyIntegral-2} \\
			&\qquad\qquad + \int \abs{\initialCond(\stochVar, \spaceVar) - \stochSteadyStateSolutions} \phi (\spaceVar, 0) \d x \label{eq:entropyIntegral-3} ,
		\end{align}
	\end{subequations}
	for some test function $\phi \in C^{\infty}_{c}(\bbR \times \bbT; \bbR_{\geq 0})$.
\end{definition}
\begin{remark}[Adapted entropy condition via adapted entropy functional]
	Note, with the adapted entropy functional, the adapted entropy condition \eqref{eq:adaptedEntropyCondition_integralForm} can be formulated as: \\
	A function $\solution (\stochVar, \cdot, \cdot)$ is an adapted entropy solution, provided that for each $\alpha \in [\imageBound, \infty)$ (or $(-\infty, \imageBound]$) and the corresponding two steady state solutions $\stochSteadyStateSolutions$, it holds that $J^{\alpha}(\stochVar, \solution) \geq 0$, for every test function $\phi \in C^{\infty}_{c}(\bbR \times \bbT; \bbR_{\geq 0})$.
\end{remark}

Before we are able to prove the measurability of the solution map, we need to establish a few properties of the steady state equations $\stochSteadyStateSolutionArg{\pm}$ and the adapted entropy functional $J^{\alpha}$.
\begin{corollary}[Measurability of steady state solutions]
	\label{cor:measurability_steadyState}
	Let Assumptions \ref{as:stochFlux} and \ref{as:stochFlux_measurability} be satisfied. Then, for any constant $\steadyStatePar \in [\imageBound, \infty)$ (or $\steadyStatePar \in (-\infty, \imageBound]$), the stochastic steady state solutions $\stochSteadyStateSolution{\pm}$ defined by Equation \eqref{eq:stochSteadyState} are measurable in the sense that, for almost every $\spaceVar \in \bbR$, the mapping $\stochVar \mapsto \stochSteadyStateSolutions$ is measurable.
\end{corollary}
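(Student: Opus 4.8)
The plan is to observe that the steady state solution is nothing but the branchwise inverse of the flux evaluated at the fixed constant $\steadyStatePar$, so that its measurability in $\stochVar$ is handed over almost verbatim to Assumption \ref{as:B-5}.

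First I would recall, as in Remark \ref{rem:stochFluxAssumptions}, that under Assumptions \ref{as:B-1}--\ref{as:B-3} (respectively \ref{as:B-1}--\ref{as:B-3alt}) the map $\fluxFunction(\stochVar, \spaceVar, \cdot)$, restricted to each monotone branch, is a one-to-one map onto $[\imageBound, \infty)$ (or $(-\infty, \imageBound]$) for each fixed $\stochVar \in \stochDomain$ and a.e.\ $\spaceVar \in \bbR$. Consequently the defining relation \eqref{eq:stochSteadyState}, namely $\fluxFunction(\stochVar, \spaceVar, \stochSteadyStateSolutions) = \steadyStatePar$, can be solved explicitly by inverting the flux, giving
\[
	\stochSteadyStateSolutions = \invFlux(\stochVar, \spaceVar, \steadyStatePar),
\]
where the inverse $\invFlux$ exists by Assumption \ref{as:stochFlux} (cf.\ Remark \ref{rem:measurabilityAssumption}). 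Under Assumption \ref{as:B-3alt} the two branches coincide; under Assumption \ref{as:B-3} the $+$ branch takes values in $[\modalityPointFct(\stochVar, \spaceVar), \infty)$ and the $-$ branch in $(-\infty, \modalityPointFct(\stochVar, \spaceVar)]$, and each $\stochSteadyStateSolution{\pm}$ is the corresponding branchwise inverse.

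Having identified $\stochSteadyStateSolutions$ with the fixed evaluation $\invFlux(\stochVar, \spaceVar, \steadyStatePar)$, I would then specialize Assumption \ref{as:B-5} to $\rho = \steadyStatePar$. Since $\steadyStatePar$ is a single constant, Assumption \ref{as:B-5} supplies one common $\bbR$-null set outside of which the map $\stochVar \mapsto \invFlux(\stochVar, \spaceVar, \steadyStatePar)$ is measurable, and the same conclusion holds for each of the two branches separately. Hence, for almost every $\spaceVar \in \bbR$, the map $\stochVar \mapsto \stochSteadyStateSolutions$ is measurable, which is exactly the assertion of the corollary.

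The argument is short precisely because the only genuine content is the identification of the steady state solution with a fixed evaluation of the inverse flux; measurability is then inherited directly from the hypotheses. I do not expect a real obstacle here. The one point warranting a line of care is that the exceptional null set in Assumption \ref{as:B-5} may a priori depend on $\rho$, but because we evaluate at the single constant $\rho = \steadyStatePar$ this dependence is harmless, and no uniformity over $\rho$ is needed.
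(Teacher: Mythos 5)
Your proof is correct and follows essentially the same route as the paper: the paper's (very terse) proof likewise obtains existence of $\stochSteadyStateSolution{\pm}$ from Assumptions \ref{as:B-1}--\ref{as:B-3} and then deduces measurability directly from Assumption \ref{as:B-5}, which implicitly relies on exactly the identification $\stochSteadyStateSolutions = \invFlux(\stochVar, \spaceVar, \steadyStatePar)$ that you spell out (the same identification the paper uses explicitly in Lemma \ref{lem:steadyStateContinuity}). Your version simply makes this step, and the harmlessness of the $\rho$-dependent null set, explicit.
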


\begin{proof}
	By assumptions \ref{as:B-1}\--\ref{as:B-3}, we have the existence of the steady state solutions, which are measurable due to Assumption \ref{as:B-5}.
	Here, we mean measurability in the sense that, for each $\spaceVar \in \bbR$, the mapping $\stochVar \mapsto \stochSteadyStateSolutions$ is measurable.
\end{proof}
\begin{lemma}[Steady state solutions depend continuously on $\alpha$]
	\label{lem:steadyStateContinuity}
	Let Assumption \ref{as:stochFlux} be satisfied.
	Then, the steady state solutions defined via Equation \eqref{eq:stochSteadyState} depend continuously on $\alpha \in [\imageBound, \infty)$ (or $\alpha \in (-\infty, \imageBound]$).
\end{lemma}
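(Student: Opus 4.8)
The plan is to reduce the statement to the elementary fact that the inverse of a continuous, strictly monotone bijection between intervals is again continuous. Fix $\stochVar \in \stochDomain$ and $\spaceVar \in \bbR \setminus \nullSet(\stochVar)$. By Assumption \ref{as:B-3}, the flux $\fluxFunction(\stochVar, \spaceVar, \cdot)$ restricted to $[\modalityPointFct(\stochVar, \spaceVar), \infty)$ (respectively to $(-\infty, \modalityPointFct(\stochVar, \spaceVar)]$) is locally Lipschitz and one-to-one onto $[\imageBound, \infty)$ (or $(-\infty, \imageBound]$); since a continuous injection on an interval is automatically strictly monotone, each branch is a strictly monotone continuous bijection onto its image. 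By Remark \ref{rem:stochFluxAssumptions}, the steady state value $\stochSteadyStateSolution{+}$ (resp. $\stochSteadyStateSolution{-}$) is precisely the point at which the upper (resp. lower) branch attains the level $\steadyStatePar$, i.e. $\stochSteadyStateSolutions = \invFlux(\stochVar, \spaceVar, \steadyStatePar)$ read off on the appropriate branch.

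First I would record that each branch, being a continuous strictly monotone bijection of intervals, has a continuous (and strictly monotone) inverse; this is a standard one-variable fact and needs no regularity beyond continuity. Consequently, for the fixed pair $(\stochVar, \spaceVar)$ the map $\steadyStatePar \mapsto \stochSteadyStateSolutions = \invFlux(\stochVar, \spaceVar, \steadyStatePar)$ is continuous on $[\imageBound, \infty)$ (or on $(-\infty, \imageBound]$), with the appropriate one-sided continuity at the endpoint $\steadyStatePar = \imageBound$, where both branches meet at $\modalityPointFct(\stochVar, \spaceVar)$. Under the alternative Assumption \ref{as:B-3alt} the argument is even shorter: $\fluxFunction(\stochVar, \spaceVar, \cdot)$ is a single continuous monotone bijection of $\bbR$ onto $\bbR$, so its inverse is continuous on all of $\bbR$ and the two steady states coincide, as already noted in Remark \ref{rem:stochFluxAssumptions}.

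The point that requires care is the \emph{type} of continuity being asserted. Assumption \ref{as:B-3} supplies only an upper Lipschitz bound $\fluxLipschitzConstant$ on $\fluxFunction(\stochVar, \spaceVar, \cdot)$ over bounded intervals, but no matching lower bound on its increments; hence one cannot upgrade the conclusion to Lipschitz continuity of $\steadyStatePar \mapsto \stochSteadyStateSolutions$, nor obtain a modulus of continuity uniform in $\spaceVar$, and the purely topological continuity of the inverse is genuinely all that is available. This is, however, exactly what is needed downstream: the pointwise-in-$\spaceVar$ continuity established here, together with a dominated-convergence argument, will yield continuity of $\steadyStatePar \mapsto J^{\alpha}(\stochVar, \nu)$, which in turn allows the family of adapted entropy inequalities to be tested only against a countable dense set of levels $\steadyStatePar$ in the subsequent measurability proof.
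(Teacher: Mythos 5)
Your proof is correct, and its core step is the same as the paper's: identify the steady state $m^{\pm}_{\steadyStatePar}(\stochVar,\spaceVar)$ with $\invFlux(\stochVar,\spaceVar,\steadyStatePar)$ and deduce continuity in $\steadyStatePar$ from continuity of the inverse. The genuine difference is in what is claimed beyond that, and here your version is the more defensible one. The paper does not stop at topological continuity: it asserts the quantitative bound $\abs{m^{\pm}_{\alpha}-m^{\pm}_{\beta}} = \abs{\invFlux(\stochVar,\spaceVar,\alpha)-\invFlux(\stochVar,\spaceVar,\beta)} \leq C_{\stochVar}\abs{\alpha-\beta}$, justified only by ``the continuity of $\invFlux$'', and then invokes Assumption \ref{as:B-6} to make $C_{\stochVar}$ independent of $\spaceVar$ --- even though the lemma hypothesizes only Assumption \ref{as:stochFlux}, not Assumption \ref{as:stochFlux_measurability}. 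As you observe, this Lipschitz upgrade does not follow from the stated assumptions: local Lipschitz continuity of $\fluxFunction(\stochVar,\spaceVar,\cdot)$ bounds increments from \emph{above}, which for the inverse yields only the lower bound $\abs{m^{\pm}_{\alpha}-m^{\pm}_{\beta}} \geq \abs{\alpha-\beta}/\fluxLipschitzConstant$. Concretely, $\fluxFunction(\stochVar,\spaceVar,\rho)=\rho^{3}$ satisfies \ref{as:B-1}, \ref{as:B-2}, \ref{as:B-3alt} (and, being deterministic, all of Assumption \ref{as:stochFlux_measurability} as well), yet its inverse $\rho^{1/3}$ is not Lipschitz at the origin; one can likewise build examples, consistent with all assumptions, in which no modulus of continuity uniform in $\spaceVar$ exists. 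So your proof establishes exactly the continuity the lemma states, while exposing that the paper's own argument overclaims. What the paper's (unjustified) stronger estimate buys is the linear-in-$\abs{\alpha-\beta}$ bound $\abs{G^{\alpha}-G^{\beta}} \leq (C_{\phi}+3L_{I}C_{\phi}C_{\omega})\abs{\alpha-\beta}$ in the subsequent proposition on continuity of $J^{\alpha}$ in $\alpha$; with your weaker conclusion those estimates must indeed be rerun qualitatively, via the dominated-convergence argument you sketch (domination on the compact support of $\phi$ follows from the coercivity in \ref{as:B-2}, which bounds $m^{\pm}_{\beta}$ uniformly for $\beta$ in a neighborhood of $\alpha$). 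Since Theorem \ref{thm:measurability} only needs qualitative continuity in $\alpha$ to restrict to $\steadyStatePar \in [\imageBound,\infty)\cap\bbQ$, your route repairs the whole chain of reasoning at no real cost.
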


\begin{proof}
	Let $\alpha, \beta \in [\imageBound, \infty)$ (or $\alpha, \beta \in (-\infty, \imageBound]$) with $\abs{\alpha - \beta} < \delta$.
	Consider the two steady state equations
	\begin{subequations}
		\begin{align}
			\fluxFunction(\stochVar, \spaceVar, \entropyConstant^{\pm}_{\alpha}) &= \alpha \quad \text{for a.e. } x \in \bbR \label{eq:steadyStateAlpha} , \\
			\fluxFunction(\stochVar, \spaceVar, \entropyConstant^{\pm}_{\beta}) &= \beta \quad \text{for a.e. } x \in \bbR \label{eq:steadyStateBeta} .
		\end{align}
	\end{subequations}
	By assumption, we know that there exist two steady state solutions $\entropyConstant^{\pm}_{\alpha}$ and $\entropyConstant^{\pm}_{\beta}$ for each \eqref{eq:steadyStateAlpha} and \eqref{eq:steadyStateBeta}, respectively.
	Thus, we may rewrite these equations as
	\begin{subequations}
		\begin{align}
			\entropyConstant^{\pm}_{\alpha} &= \invFlux(\stochVar, \spaceVar, \alpha) \quad \text{for a.e. } x \in \bbR \label{eq:steadyStateInvAlpha} , \\
			\entropyConstant^{\pm}_{\beta} &= \invFlux(\stochVar, \spaceVar, \beta) \quad \text{for a.e. } x \in \bbR \label{eq:steadyStateInvBeta} .
		\end{align}
	\end{subequations}
	Here $\invFlux(\stochVar, \spaceVar, \cdot)$ denotes the inverse of $\fluxFunction(\stochVar, \spaceVar, \cdot)$.
	Further, since $\fluxFunction(\stochVar, \spaceVar, \cdot)$ is a continuous map from $\bbR$ to $[\imageBound, \infty)$ (or $(-\infty, \imageBound]$), we get that $\invFlux(\stochVar, \spaceVar, \cdot)$ is a continuous function from either $[\imageBound, \infty)$ or $(-\infty, \imageBound]$ to $\bbR$.
	Now, consider
	\begin{equation}
		\begin{aligned}
			\abs{\entropyConstant^{\pm}_{\alpha} - \entropyConstant^{\pm}_{\beta}} = \abs{\invFlux(\stochVar, \spaceVar, \alpha) - \invFlux(\stochVar, \spaceVar, \beta)} 
			\leq C_{\stochVar} \abs{\alpha - \beta} ,
		\end{aligned}
	\end{equation}
	where we used the continuity of $\invFlux(\stochVar, \spaceVar, \cdot)$.
	Note, by Assumption \ref{as:B-6} the constant $C_{\stochVar}$ does not depend on $x \in \bbR$.
	Consequently, we have shown that the steady state solutions \eqref{eq:stochSteadyState} depend continuously on $\alpha$.
\end{proof}
\begin{proposition}[Adapted entropy functional depends continuously on $\alpha$]
	Let the flux function satisfy Assumptions \ref{as:stochFlux} and \ref{as:stochFlux_measurability}. 
	Let $\alpha \in [\imageBound, \infty)$ (or $(-\infty, \imageBound]$) be fixed and let $\stochSteadyStateSolution{\pm}$ denote the steady state solutions defined by Equation \eqref{eq:stochSteadyState}.
	Then the adapted entropy functional $J^{\alpha}$ defined in Equation \eqref{eq:entropyFunctional} depends continuously on $\alpha$.
\end{proposition}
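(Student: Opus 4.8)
The plan is to fix $\stochVar \in \stochDomain$, a test function $\phi \in C^{\infty}_{c}(\bbR \times \bbT; \bbR_{\geq 0})$ and an argument $\nu \in \cX$, and to show that $\steadyStatePar \mapsto J^{\steadyStatePar}(\stochVar, \nu)$ is (locally Lipschitz) continuous by estimating $\abs{J^{\steadyStatePar}(\stochVar, \nu) - J^{\beta}(\stochVar, \nu)}$ for $\steadyStatePar, \beta$ in the same half-line with $\abs{\steadyStatePar - \beta}$ small. I would treat the three integrals \eqref{eq:entropyIntegral-1}--\eqref{eq:entropyIntegral-3} separately. Throughout I denote by $\entropyConstant^{\pm}_{\steadyStatePar}, \entropyConstant^{\pm}_{\beta}$ the steady state solutions associated with $\steadyStatePar, \beta$ as in Lemma \ref{lem:steadyStateContinuity}, and I use that $\nu \in L^{\infty}$ confines the relevant values to a compact interval on which, by Assumption \ref{as:B-2}, $\fluxFunction(\stochVar, \spaceVar, \nu)$ is uniformly bounded, so that all three integrals are finite.

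For the first and third integrals I would invoke the reverse triangle inequality together with the Lipschitz estimate from Lemma \ref{lem:steadyStateContinuity}. Indeed,
\begin{equation*}
	\abs{\, \abs{\nu - \entropyConstant^{\pm}_{\steadyStatePar}} - \abs{\nu - \entropyConstant^{\pm}_{\beta}} \,} \leq \abs{\entropyConstant^{\pm}_{\steadyStatePar} - \entropyConstant^{\pm}_{\beta}} \leq C_{\stochVar} \abs{\steadyStatePar - \beta},
\end{equation*}
and since $\phi$ has compact support, integrating against $\timeDeriv \phi$ and $\phi(\cdot, 0)$ yields bounds of the form $C_{\stochVar} \abs{\steadyStatePar - \beta} \norm{\timeDeriv \phi}_{L^1}$ and $C_{\stochVar} \abs{\steadyStatePar - \beta} \norm{\phi(\cdot, 0)}_{L^1}$, both of which vanish as $\beta \to \steadyStatePar$.

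The main obstacle is the flux integral \eqref{eq:entropyIntegral-2}, because the discontinuous $\sign$ factor cannot be controlled by the continuity of $\entropyConstant^{\pm}_{\steadyStatePar}$ alone. I would split the integrand as
\begin{equation*}
	\begin{aligned}
		&\sign(\nu - \entropyConstant^{\pm}_{\steadyStatePar})(\fluxFunction(\stochVar, \spaceVar, \nu) - \steadyStatePar) - \sign(\nu - \entropyConstant^{\pm}_{\beta})(\fluxFunction(\stochVar, \spaceVar, \nu) - \beta) \\
		&\qquad = \sign(\nu - \entropyConstant^{\pm}_{\steadyStatePar})(\beta - \steadyStatePar) + \big( \sign(\nu - \entropyConstant^{\pm}_{\steadyStatePar}) - \sign(\nu - \entropyConstant^{\pm}_{\beta}) \big)(\fluxFunction(\stochVar, \spaceVar, \nu) - \beta).
	\end{aligned}
\end{equation*}
The first summand is bounded in absolute value by $\abs{\steadyStatePar - \beta}$. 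For the second summand, observe that the $\sign$-difference vanishes unless $\nu(\spaceVar, \timeVar)$ lies between $\entropyConstant^{\pm}_{\steadyStatePar}$ and $\entropyConstant^{\pm}_{\beta}$; on that set the monotonicity of $\fluxFunction(\stochVar, \spaceVar, \cdot)$ on each branch (Assumption \ref{as:B-3}, resp.\ \ref{as:B-3alt}) forces $\fluxFunction(\stochVar, \spaceVar, \nu)$ to lie between $\steadyStatePar = \fluxFunction(\stochVar, \spaceVar, \entropyConstant^{\pm}_{\steadyStatePar})$ and $\beta = \fluxFunction(\stochVar, \spaceVar, \entropyConstant^{\pm}_{\beta})$, so that $\abs{\fluxFunction(\stochVar, \spaceVar, \nu) - \beta} \leq \abs{\steadyStatePar - \beta}$ there. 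Since the $\sign$-difference is bounded by $2$, the second summand is pointwise bounded by $2\abs{\steadyStatePar - \beta}$, and integrating against $\spaceDeriv \phi$ gives a bound of the form $C \abs{\steadyStatePar - \beta} \norm{\spaceDeriv \phi}_{L^1}$.

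Combining the three estimates yields $\abs{J^{\steadyStatePar}(\stochVar, \nu) - J^{\beta}(\stochVar, \nu)} \leq C_{\stochVar, \phi} \abs{\steadyStatePar - \beta}$, with a constant independent of $\nu$ on $L^{\infty}$-bounded sets, which proves the asserted continuity (in fact local Lipschitz continuity) in $\steadyStatePar$. The crucial insight, and the only place where more than the triangle inequality is needed, is that the jump set of the $\sign$ factor is exactly the region where, by the one-to-one branch structure of the flux, the factor $\fluxFunction(\stochVar, \spaceVar, \nu) - \beta$ is itself of order $\abs{\steadyStatePar - \beta}$.
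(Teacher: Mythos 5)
Your proposal is correct, and for the crucial flux term it takes a genuinely different route from the paper. The paper also dispatches the first and third integrals via Lemma \ref{lem:steadyStateContinuity}, but for the term \eqref{eq:entropyIntegral-2} it partitions the spatial domain into three sets according to the configuration of the two sign factors (signs equal, signs opposite, and the set where $\nu$ equals one of the steady states), and then estimates each piece using the local Lipschitz continuity of $\fluxFunction$ (constant $L_I$) together with the steady-state continuity lemma, arriving at a bound of the form $\left( C_{\phi} + 3 L_I C_{\phi} C_{\omega} \right) \abs{\alpha - \beta}$. You instead perform an add-and-subtract decomposition of the integrand itself, so that one summand is trivially of size $\abs{\alpha - \beta}$, and the other is supported exactly where the sign factor jumps; there you invoke the branch-wise injectivity (hence strict monotonicity) of $\fluxFunction(\stochVar, \spaceVar, \cdot)$ to conclude that $\fluxFunction(\stochVar, \spaceVar, \nu)$ is pinched between $\alpha$ and $\beta$, so the flux factor is itself of order $\abs{\alpha - \beta}$ precisely where the sign factor is discontinuous. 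Your endpoint cases ($\nu$ equal to a steady state, where the sign is zero) are absorbed into this "between" set, whereas the paper isolates them in the separate set $D^{m}$. What each approach buys: the paper's argument recycles machinery it needs anyway (the Lipschitz bound and Lemma \ref{lem:steadyStateContinuity}) and runs parallel to its later proof of Proposition \ref{prop:EntropyFunctional_Caratheodory}, at the cost of a three-way case analysis and a constant involving both $L_I$ and $C_{\omega}$; your argument is shorter, and for the flux term it produces a bound $3 C_{\phi} \abs{\alpha - \beta}$ that is independent of $L_I$, of $C_{\omega}$, and of $\nu$, with dependence on the sample and on $\norm{\nu}_{L^{\infty}}$ entering only through the first and third integrals. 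Both establish the asserted (indeed local Lipschitz) continuity in $\alpha$.
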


\begin{proof}
	Let $\alpha, \beta \in [\imageBound, \infty)$ (or $(-\infty, \imageBound]$) with $\abs{\alpha - \beta} < \delta$.
	Due to Lemma \ref{lem:steadyStateContinuity}, it immediately follows that the terms \eqref{eq:entropyIntegral-1} and \eqref{eq:entropyIntegral-3} are continuous in $\alpha$.
	To show the continuity of \eqref{eq:entropyIntegral-2}, we define
	\begin{equation}
		\label{eq:G_defintion}
		G^{\alpha}(\omega, \nu) \coloneqq \int \sign (\nu(\spaceVar, \timeVar) - \stochSteadyStateSolutions) (\fluxFunction(\stochVar, \spaceVar, \nu(\spaceVar, \timeVar)) - \steadyStatePar) \spaceDeriv \phi(x, t) \d t \d x ,
	\end{equation}
	and consider
	\begin{equation}
		\label{eq:continuityDef_alpha}
		\begin{aligned}
			\abs{ G^{\alpha} (\omega, \nu(\spaceVar, \timeVar)) - G^{\beta}(\omega, \nu(\spaceVar, \timeVar))} &= \Big| \int \Big( \sign (\nu(\spaceVar, \timeVar) - m^{\pm}_{\alpha}(\stochVar, \spaceVar)) (\fluxFunction(\stochVar, \spaceVar, \nu(\spaceVar, \timeVar)) - \alpha) \\
			&\hspace*{2cm} - \sign(\nu(\spaceVar, \timeVar) - m^{\pm}_{\beta}(\stochVar, \spaceVar) (\fluxFunction(\stochVar, \spaceVar, \nu(\spaceVar, \timeVar)) - \beta ) \Big) \spaceDeriv \phi \d t \d x \Big| .
		\end{aligned}
	\end{equation}
	
	Define the following three sets:
	\begin{subequations}
		\begin{align}
			D^{=} &= \set{x \in \bbR : \sign(\nu(\spaceVar, \timeVar) - m^{\pm}_{\alpha}(\stochVar, \spaceVar)) = \sign(\nu(\spaceVar, \timeVar) - m^{\pm}_{\beta}(\stochVar, \spaceVar))} , \\
			D^{\pm} &= \set{x \in \bbR : \sign(\nu(\spaceVar, \timeVar) - m^{\pm}_{\alpha}(\stochVar, \spaceVar)) = - \sign(\nu(\spaceVar, \timeVar) - m^{\pm}_{\beta}(\stochVar, \spaceVar))} , \\
			D^{m} &= \set{x \in \bbR : \nu(\spaceVar, \timeVar) = m^{\pm}_{\alpha}(\stochVar, \spaceVar) \text{ or } \nu(\spaceVar, \timeVar) = m^{\pm}_{\beta}(\stochVar, \spaceVar)} .
		\end{align}
	\end{subequations}
	
	Note, these sets are disjoint, whenever we have $\alpha \neq \beta$, since we have 
	\begin{equation}
		D^{=} \cap D^{\pm} \cap D^{m} = \set{x \in \bbR: m^{\pm}_{\alpha}(\stochVar, \spaceVar) = \nu(\spaceVar, \timeVar) = m^{\pm}_{\beta}(\stochVar, \spaceVar)} .
	\end{equation}
	Hence, we can rewrite Equation \eqref{eq:continuityDef_alpha} as
	\begin{subequations}
		\label{eq:integralSplit_alpha}
		\begin{align}
			\Big| G^{\alpha} (\omega, \nu(\spaceVar, \timeVar)) &- G^{\beta}(\omega, \nu(\spaceVar, \timeVar)) \Big|  \\
			\begin{split} 
				&\leq \Big|\int_{D^{=}} \Big( \sign (\nu(\spaceVar, \timeVar) - m^{\pm}_{\alpha}(\stochVar, \spaceVar)) (\fluxFunction(\stochVar, \spaceVar, \nu(\spaceVar, \timeVar)) - \alpha) \\
				&\hspace*{3cm} - \sign(\nu(\spaceVar, \timeVar) - m^{\pm}_{\beta}(\stochVar, \spaceVar)) (\fluxFunction(\stochVar, \spaceVar, \nu(\spaceVar, \timeVar)) - \beta ) \Big) \spaceDeriv \phi \d t \d x \Big| \label{eq:integral_D=_alpha}
			\end{split}\\
			\begin{split}
				&\qquad + \Big| \int_{D^{\pm}} \Big( \sign (\nu(\spaceVar, \timeVar) - m^{\pm}_{\alpha}(\stochVar, \spaceVar)) (\fluxFunction(\stochVar, \spaceVar, \nu(\spaceVar, \timeVar)) - \alpha) \\
				&\hspace*{3cm} - \sign(\nu(\spaceVar, \timeVar) - m^{\pm}_{\beta}(\stochVar, \spaceVar)) (\fluxFunction(\stochVar, \spaceVar, \nu(\spaceVar, \timeVar)) - \beta ) \Big) \spaceDeriv \phi \d t \d x \Big| \label{eq:integral_Dpm_alpha}
			\end{split}\\
			\begin{split}
				&\qquad + \Big| \int_{D^{m}} \Big( \sign (\nu(\spaceVar, \timeVar) - m^{\pm}_{\alpha}(\stochVar, \spaceVar)) (\fluxFunction(\stochVar, \spaceVar, \nu(\spaceVar, \timeVar)) - \alpha) \\
				&\hspace*{3cm} - \sign(\nu(\spaceVar, \timeVar) - m^{\pm}_{\beta}(\stochVar, \spaceVar)) (\fluxFunction(\stochVar, \spaceVar, \nu(\spaceVar, \timeVar)) - \beta ) \Big) \spaceDeriv \phi \d t \d x \Big| \label{eq:integral_Dm_alpha} .
			\end{split}
		\end{align}
	\end{subequations}
	
	We will now estimate the terms in \eqref{eq:integralSplit_alpha} separately, starting with \eqref{eq:integral_D=_alpha}:
	\begin{equation}
		\begin{aligned}
			\Big| \int_{D^{=}} \Big( \sign (\nu(\spaceVar, \timeVar) &- m^{\pm}_{\alpha}(\stochVar, \spaceVar)) (\fluxFunction(\stochVar, \spaceVar, \nu(\spaceVar, \timeVar)) - \alpha) \\
			&- \sign(\nu(\spaceVar, \timeVar) - m^{\pm}_{\beta}(\stochVar, \spaceVar)) (\fluxFunction(\stochVar, \spaceVar, \nu(\spaceVar, \timeVar)) - \beta ) \Big) \spaceDeriv \phi \d t \d x \Big| \\
			&\hspace*{2cm}\leq \int_{D^{=}} \Big| \sign (\nu(\spaceVar, \timeVar) - m^{\pm}_{\alpha}(\stochVar, \spaceVar)) (\fluxFunction(\stochVar, \spaceVar, \nu(\spaceVar, \timeVar)) - \alpha) \\
			&\hspace*{4cm} - \sign(\nu(\spaceVar, \timeVar) - m^{\pm}_{\beta}(\stochVar, \spaceVar)) (\fluxFunction(\stochVar, \spaceVar, \nu(\spaceVar, \timeVar)) - \beta ) \spaceDeriv \phi \Big| \d t \d x \\
			&\hspace*{2cm}\leq \int_{D^{=}} \abs{ \Big( \fluxFunction(\stochVar, \spaceVar, \nu(\spaceVar, \timeVar)) - \alpha - \fluxFunction(\stochVar, \spaceVar, \nu(\spaceVar, \timeVar)) + \beta \Big) \spaceDeriv \phi} \d t \d x \\
			&\hspace*{2cm}\leq C_{\phi} \abs{\alpha - \beta} .
		\end{aligned}
	\end{equation}
	Here, we first used the definition of $D^{=}$ and afterwards the boundedness of $\spaceDeriv \phi$.
	
	Now, consider the integral \eqref{eq:integral_Dpm_alpha}:
	\begin{equation}
		\begin{aligned}
			\Big| \int_{D^{\pm}} \Big( \sign (&\nu(\spaceVar, \timeVar) - m^{\pm}_{\alpha}(\stochVar, \spaceVar)) (\fluxFunction(\stochVar, \spaceVar, \nu(\spaceVar, \timeVar)) - \alpha) \\
			&- \sign(\nu(\spaceVar, \timeVar) - m^{\pm}_{\beta}(\stochVar, \spaceVar) (\fluxFunction(\stochVar, \spaceVar, \nu(\spaceVar, \timeVar)) - \beta ) \Big) \spaceDeriv \phi \d t \d x \Big| \\
			&\hspace*{1.5cm}\leq \int_{D^{\pm}} \Big| \sign (\nu(\spaceVar, \timeVar) - m^{\pm}_{\alpha}(\stochVar, \spaceVar)) (\fluxFunction(\stochVar, \spaceVar, \nu(\spaceVar, \timeVar)) - \alpha) \\
			&\hspace*{3cm} - \sign(\nu(\spaceVar, \timeVar) - m^{\pm}_{\beta}(\stochVar, \spaceVar)) (\fluxFunction(\stochVar, \spaceVar, \nu(\spaceVar, \timeVar)) - \beta ) \spaceDeriv \phi \Big| \d t \d x \\
			&\hspace*{1.5cm}\leq \int_{D^{\pm}} \abs{ (\fluxFunction(\stochVar, \spaceVar, \nu(\spaceVar, \timeVar)) - \alpha) \spaceDeriv \phi } \d t \d x + \int_{D^{\pm}} \abs{ (\fluxFunction(\stochVar, \spaceVar, \nu(\spaceVar, \timeVar)) - \beta) \spaceDeriv \phi } \d t \d x \\
			&\hspace*{1.5cm}\leq \int_{D^{\pm}} L_{I} \abs{ \nu(\spaceVar, \timeVar) - m^{\pm}_{\alpha}(\stochVar, \spaceVar)} \abs{ \spaceDeriv \phi } \d t \d x + \int_{D^{\pm}} L_{I} \abs{ \nu(\spaceVar, \timeVar) - m^{\pm}_{\beta}(\stochVar, \spaceVar)} \abs{ \spaceDeriv \phi } \d t \d x .
		\end{aligned}
	\end{equation}
	In the second step, we used the definition of $D^{\pm}$ together with the triangle inequality and estimated $\sign(\nu - m^{\pm}_{\alpha}) \leq 1$.
	Afterwards, we inserted the definition of $\alpha$ and $\beta$ via Equations \eqref{eq:steadyStateAlpha} and \eqref{eq:steadyStateBeta}, respectively, and used the local Lipschitz continuity of $\fluxFunction$.
	Using now the fact that for $\xi_1, \xi_2 \in \bbR$, we have the identity $\abs{\xi_1 - \xi_2} = \sign(\xi_1 - \xi_2)(\xi_1 - \xi_2)$, we obtain
	\begin{equation}
		\begin{aligned}
			\Big| \int_{D^{\pm}} \Big( \sign (&\nu(\spaceVar, \timeVar) - m^{\pm}_{\alpha}(\stochVar, \spaceVar)) (\fluxFunction(\stochVar, \spaceVar, \nu(\spaceVar, \timeVar)) - \alpha) \\
			&- \sign(\nu(\spaceVar, \timeVar) - m^{\pm}_{\beta}(\stochVar, \spaceVar) (\fluxFunction(\stochVar, \spaceVar, \nu(\spaceVar, \timeVar)) - \beta ) \Big) \spaceDeriv \phi \d t \d x \Big| \\
			&\hspace*{0.5cm}\leq \int_{D^{\pm}} L_{I} \Big( \sign(\nu(\spaceVar, \timeVar) - m^{\pm}_{\alpha}(\stochVar, \spaceVar)) (\nu(\spaceVar, \timeVar) - m^{\pm}_{\alpha}(\stochVar, \spaceVar)) \\
			&\hspace*{3cm} + \sign(\nu(\spaceVar, \timeVar) - m^{\pm}_{\beta}(\stochVar, \spaceVar)) (\nu(\spaceVar, \timeVar) - m^{\pm}_{\beta}(\stochVar, \spaceVar)) \Big) \abs{ \spaceDeriv \phi } \d t \d x \\
			&\hspace*{0.5cm}= \int_{D^{\pm}} L_{I} \sign(\nu(\spaceVar, \timeVar) - m^{\pm}_{\alpha}(\stochVar, \spaceVar)) \Big( \nu(\spaceVar, \timeVar) - m^{\pm}_{\alpha}(\stochVar, \spaceVar) - \nu(\spaceVar, \timeVar) + m^{\pm}_{\beta}(\stochVar, \spaceVar) \Big) \abs{ \spaceDeriv \phi } \d t \d x \\
			&\hspace*{0.5cm}\leq \int_{D^{\pm}} L_{I} \abs{ m^{\pm}_{\alpha}(\stochVar, \spaceVar) - m^{\pm}_{\beta}(\stochVar, \spaceVar)} \abs{\spaceDeriv \phi} \d t \d x \\
			&\hspace*{0.5cm}\leq L_{I} C_{\phi} \abs{ m^{\pm}_{\alpha}(\stochVar, \spaceVar) - m^{\pm}_{\beta}(\stochVar, \spaceVar)} \\
			&\hspace*{0.5cm}\leq L_{I} C_{\phi} C_{\omega} \abs{\alpha - \beta}.
		\end{aligned}
	\end{equation}
	Here, we used in the last step the continuity of $m^{\pm}_{\alpha}$ in $\alpha$ from Lemma \ref{lem:steadyStateContinuity}.
	It remains to estimate \eqref{eq:integral_Dm_alpha}.
	Therefore, we define the two sets
	\begin{subequations}
		\begin{align}
			D^{m}_{\alpha} &= \set{x \in \bbR: \nu(\spaceVar, \timeVar) = m^{\pm}_{\alpha}(\stochVar, \spaceVar)} , \\
			D^{m}_{\beta} &= \set{x \in \bbR: \nu(\spaceVar, \timeVar) = m^{\pm}_{\beta}(\stochVar, \spaceVar)} .
		\end{align}
	\end{subequations}
	Note that these sets are disjoint, whenever $\alpha \neq \beta$.
	With the definition of $D^{m}_{\alpha}$ and $D^{m}_{\beta}$ and the triangle inequality, we obtain for \eqref{eq:integral_Dm_alpha}
	\begin{equation}
		\begin{aligned}
			\Big| \int_{D^{m}} \Big( \sign (\nu(\spaceVar, \timeVar) &- m^{\pm}_{\alpha}(\stochVar, \spaceVar)) (\fluxFunction(\stochVar, \spaceVar, \nu(\spaceVar, \timeVar)) - \alpha) \\
			&- \sign(\nu(\spaceVar, \timeVar) - m^{\pm}_{\beta}(\stochVar, \spaceVar)) (\fluxFunction(\stochVar, \spaceVar, \nu(\spaceVar, \timeVar)) - \beta ) \Big) \spaceDeriv \phi \d t \d x \Big| \\
			&\hspace*{1.5cm}\leq \Big| \int_{D^{m}_{\alpha}} - \sign(\nu(\spaceVar, \timeVar) - m^{\pm}_{\beta}(\stochVar, \spaceVar)) (\fluxFunction(\stochVar, \spaceVar, \nu(\spaceVar, \timeVar)) - \beta ) \spaceDeriv \phi \d t \d x \Big| \\
			&\hspace*{3cm} + \Big| \int_{D^{m}_{\beta}} \sign (\nu(\spaceVar, \timeVar) - m^{\pm}_{\alpha}(\stochVar, \spaceVar)) (\fluxFunction(\stochVar, \spaceVar, \nu(\spaceVar, \timeVar)) - \alpha) \spaceDeriv \phi \d t \d x \Big| \\
			&\hspace*{1.5cm}\leq \int_{D^{m}_{\alpha}} \abs{\fluxFunction(\stochVar, \spaceVar, \nu(\spaceVar, \timeVar)) - \beta} \abs{\spaceDeriv \phi} \d t \d x + \int_{D^{m}_{\beta}} \abs{\fluxFunction(\stochVar, \spaceVar, \nu(\spaceVar, \timeVar)) - \alpha} \abs{\spaceDeriv \phi} \d t \d x . \\
		\end{aligned}
	\end{equation}
	Now, using the steady state equations \eqref{eq:steadyStateAlpha} and \eqref{eq:steadyStateBeta} and the local Lipschitz continuity of $\fluxFunction$, we get
	\begin{equation}
		\begin{aligned}
			\Big| \int_{D^{m}} \Big( &\sign (\nu(\spaceVar, \timeVar) - m^{\pm}_{\alpha}(\stochVar, \spaceVar)) (\fluxFunction(\stochVar, \spaceVar, \nu(\spaceVar, \timeVar)) - \alpha) \\
			&- \sign(\nu(\spaceVar, \timeVar) - m^{\pm}_{\beta}(\stochVar, \spaceVar)) (\fluxFunction(\stochVar, \spaceVar, \nu(\spaceVar, \timeVar)) - \beta ) \Big) \spaceDeriv \phi \d t \d x \Big| \\
			&\hspace*{1.5cm}= \int_{D^{m}_{\alpha}} \abs{\fluxFunction(\stochVar, \spaceVar, \nu(\spaceVar, \timeVar)) - \fluxFunction(\stochVar, \spaceVar, m^{\pm}_{\beta}(\stochVar, \spaceVar))} \abs{\spaceDeriv \phi} \d t \d x \\
			&\hspace*{3cm}+ \int_{D^{m}_{\beta}} \abs{\fluxFunction(\stochVar, \spaceVar, \nu(\spaceVar, \timeVar)) - \fluxFunction(\stochVar, \spaceVar, m^{\pm}_{\alpha}(\stochVar, \spaceVar))} \abs{\spaceDeriv \phi} \d t \d x \\
			&\hspace*{1.5cm}= \int_{D^{m}_{\alpha}} \abs{\fluxFunction(\stochVar, \spaceVar, m^{\pm}_{\alpha}(\stochVar, \spaceVar)) - \fluxFunction(\stochVar, \spaceVar, m^{\pm}_{\beta}(\stochVar, \spaceVar))} \abs{\spaceDeriv \phi} \d t \d x \\
			&\hspace*{3cm}+ \int_{D^{m}_{\beta}} \abs{\fluxFunction(\stochVar, \spaceVar, m^{\pm}_{\beta}(\stochVar, \spaceVar)) - \fluxFunction(\stochVar, \spaceVar, m^{\pm}_{\alpha}(\stochVar, \spaceVar))} \abs{\spaceDeriv \phi} \d t \d x \\
			&\hspace*{1.5cm}\leq \int_{D^{m}_{\alpha}} L_{I} \abs{m^{\pm}_{\alpha}(\stochVar, \spaceVar) - m^{\pm}_{\beta}(\stochVar, \spaceVar)} \abs{\spaceDeriv \phi} \d t \d x + \int_{D^{m}_{\beta}} L_{I} \abs{m^{\pm}_{\beta}(\stochVar, \spaceVar) - m^{\pm}_{\alpha}(\stochVar, \spaceVar)} \abs{\spaceDeriv \phi} \d t \d x \\
			&\hspace*{1.5cm}\leq 2 L_{I} C_{\phi} C_{\omega} \abs{\alpha - \beta} .
		\end{aligned}
	\end{equation}
	Here, we used Lemma \ref{lem:steadyStateContinuity} in the last step.
	Consequently, we get the following estimation for \eqref{eq:integralSplit_alpha}:
	\begin{equation}
		\begin{aligned}
			\Big| G^{\alpha} (\omega, \nu) - G^{\beta}(\omega, \nu) \Big| &\leq C_{\phi} \abs{\alpha - \beta} + L_{I} C_{\phi} C_{\omega} \abs{\alpha - \beta} + 2 L_{I} C_{\phi} C_{\omega} \abs{\alpha - \beta} \\
			&\leq \Big( C_{\phi} + 3 L_{I} C_{\phi} C_{\omega} \Big) \abs{\alpha - \beta}
		\end{aligned}
	\end{equation}
	and thus, the functional $J^{\alpha}$ depends continuously on $\alpha$.
\end{proof}

As a last property of the adapted entropy functional, we will show that $J^{\alpha}$ is a Carathéodory functional.
\begin{proposition}[Adapted entropy functional is Carathéodory]
	\label{prop:EntropyFunctional_Caratheodory}
	Let the flux function $\fluxFunction$ satisfy Assumptions \ref{as:stochFlux} and \ref{as:stochFlux_measurability} and let $\initialCond \in L^{p}(\Omega, L^{\infty}(\bbR))$.
	Further, let $\alpha \in [\imageBound, \infty)$ (or $(-\infty, \imageBound]$) be fixed and let $\stochSteadyStateSolution{\pm}$ denote the steady state solutions defined by Equation \eqref{eq:stochSteadyState}.
	Then the adapted entropy functional $J^{\alpha}$, defined in \eqref{eq:entropyFunctional} is a Carathéodory map, i.e., $J^{\alpha}(\omega, \nu)$ is measurable in $\stochVar \in \Omega$ and continuous in $\nu \in \cX$.
\end{proposition}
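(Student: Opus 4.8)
The plan is to verify the two defining properties of a Carathéodory map separately: measurability of $\stochVar \mapsto J^{\alpha}(\stochVar, \nu)$ for fixed $\nu \in \cX$, and continuity of $\nu \mapsto J^{\alpha}(\stochVar, \nu)$ for fixed $\stochVar \in \stochDomain$. Observe first that the third summand \eqref{eq:entropyIntegral-3} does not depend on $\nu$ at all, so it is relevant only for the measurability part; the continuity argument reduces to the first two terms.

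For the measurability I would first establish that the integrands in \eqref{eq:entropyIntegral-1}--\eqref{eq:entropyIntegral-3} are jointly measurable in $(\stochVar, \spaceVar, \timeVar)$. By Assumption \ref{as:B-4} the map $\stochVar \mapsto \fluxFunction(\stochVar, \spaceVar, \rho)$ is measurable, while Assumption \ref{as:B-1} gives continuity of $\fluxFunction(\stochVar, \cdot, \cdot)$ off the null set $\nullSet(\stochVar)$; hence $\fluxFunction$ is (essentially) a Carathéodory function and therefore $\sigmaAlgebra \otimes \borelSigmaAlgebra(\bbR^2)$-measurable. Composing with the measurable map $(\stochVar, \spaceVar, \timeVar) \mapsto (\stochVar, \spaceVar, \nu(\spaceVar, \timeVar))$ yields joint measurability of $(\stochVar, \spaceVar, \timeVar) \mapsto \fluxFunction(\stochVar, \spaceVar, \nu(\spaceVar, \timeVar))$. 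The same reasoning applies to the steady states $\stochSteadyStateSolutions = \invFlux(\stochVar, \spaceVar, \alpha)$: they are measurable in $\stochVar$ by Corollary \ref{cor:measurability_steadyState} (through Assumption \ref{as:B-5}) and inherit continuity in $\spaceVar$ off $\nullSet(\stochVar)$ from Assumption \ref{as:B-1}, hence are jointly measurable as well. Since $\abs{\cdot}$ and $\sign$ are Borel measurable and $\phi$ is smooth, all three integrands are jointly measurable; they are also integrable, because $\phi$ has compact support, the bound \ref{as:B-6} controls the steady states, and the flux term is bounded via Lemma \ref{lem:fluxBound} together with Assumption \ref{as:B-2}. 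A Fubini--Tonelli argument then makes the parameter integrals \eqref{eq:entropyIntegral-1}--\eqref{eq:entropyIntegral-3} measurable in $\stochVar$, and $J^{\alpha}(\cdot, \nu)$ is measurable as a finite sum of measurable functions.

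For the continuity in $\nu$ it suffices to argue along $L^{\infty}$-convergent sequences: since the topology of $\cX$ is finer than that of $L^{\infty}(\bbR \times \timeInterval)$, convergence $\nu_k \to \nu$ in $\cX$ implies $\norm{\nu_k - \nu}_{L^{\infty}} \to 0$, hence $\nu_k \to \nu$ pointwise a.e. with a uniform bound $\sup_k \norm{\nu_k}_{L^{\infty}} < \infty$. The first term \eqref{eq:entropyIntegral-1} is then immediate from the $1$-Lipschitz continuity of $\abs{\cdot}$ and the compact support of $\timeDeriv \phi$, giving $\abs{\int (\abs{\nu_k - \stochSteadyStateSolutions} - \abs{\nu - \stochSteadyStateSolutions}) \timeDeriv \phi \, \d t \, \d x} \leq \norm{\nu_k - \nu}_{L^{\infty}} \norm{\timeDeriv \phi}_{L^1} \to 0$. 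The crucial term is \eqref{eq:entropyIntegral-2}. Here I would exploit the steady-state identity $\fluxFunction(\stochVar, \spaceVar, \stochSteadyStateSolutions) = \alpha$ from \eqref{eq:stochSteadyState}: writing the integrand as $\Psi(\stochVar, \spaceVar, \nu) := \sign(\nu - \stochSteadyStateSolutions)(\fluxFunction(\stochVar, \spaceVar, \nu) - \fluxFunction(\stochVar, \spaceVar, \stochSteadyStateSolutions))$, the jump of $\sign$ at $\nu = \stochSteadyStateSolutions$ is annihilated by the simultaneous vanishing of $\fluxFunction(\stochVar, \spaceVar, \nu) - \alpha$, so $\nu \mapsto \Psi(\stochVar, \spaceVar, \nu)$ is continuous, with $\abs{\Psi(\stochVar, \spaceVar, \nu)} \leq L_I \abs{\nu - \stochSteadyStateSolutions}$ by the local Lipschitz bound of Assumption \ref{as:B-3} (or \ref{as:B-3alt}). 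Then $\Psi(\stochVar, \spaceVar, \nu_k) \to \Psi(\stochVar, \spaceVar, \nu)$ a.e., dominated by an integrable function of the form $C_{\stochVar} \abs{\spaceDeriv \phi}$, where $C_{\stochVar} < \infty$ is the flux bound of Lemma \ref{lem:fluxBound} applied to the uniform $L^{\infty}$-bound of the $\nu_k$ plus $\abs{\alpha}$; the dominated convergence theorem yields convergence of \eqref{eq:entropyIntegral-2}.

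I expect the main obstacle to be precisely the continuity of the flux term \eqref{eq:entropyIntegral-2}: naively the $\sign$ factor is discontinuous exactly on $\set{\nu = \stochSteadyStateSolutions}$, and continuity would genuinely fail for an arbitrary constant in place of $\alpha$. What makes it work — and the structural reason for using adapted rather than Kru\v{z}hkov entropies — is the cancellation forced by \eqref{eq:stochSteadyState}, which recasts $\Psi$ as the honestly continuous superposition $s \mapsto \sign(s)(\fluxFunction(\stochVar, \spaceVar, \stochSteadyStateSolutions + s) - \fluxFunction(\stochVar, \spaceVar, \stochSteadyStateSolutions))$ in $s = \nu - \stochSteadyStateSolutions$. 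A secondary technical point is the passage from pointwise-in-$(\spaceVar, \timeVar)$ measurability in $\stochVar$ to the joint measurability required for Fubini; this is delicate only because the exceptional set $\nullSet(\stochVar)$ may vary with $\stochVar$, but it is of measure zero in $\spaceVar$ and hence irrelevant for the integrals, so the Carathéodory structure of $\fluxFunction$ and $\invFlux$ inherited from Assumptions \ref{as:B-1}, \ref{as:B-4} and \ref{as:B-5} suffices.
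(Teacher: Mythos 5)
Your proposal is correct, and it establishes both halves of the Carath\'eodory property by routes genuinely different from the paper's, even though the key structural observation is shared. For continuity in $\nu$, the paper fixes $\nu,\tilde\nu \in \cX$ and splits the spatial domain into three sets according to whether the two sign factors agree, one of the arguments sits on the steady state, or the signs are opposite; on each set it combines the local Lipschitz bound from \ref{as:B-3}/\ref{as:B-3alt} with the steady-state identity \eqref{eq:stochSteadyState}, arriving at the quantitative estimate $\abs{G^{\alpha}(\stochVar,\nu)-G^{\alpha}(\stochVar,\tilde\nu)} \leq 4 C_{\phi} L_I \norm{\nu-\tilde\nu}_{L^{\infty}(\bbR\times\bbT)}$ for the flux term \eqref{eq:entropyIntegral-2}, i.e.\ Lipschitz continuity. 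You instead observe that the integrand $\sign(v-\stochSteadyStateSolutions)\,(\fluxFunction(\stochVar,\spaceVar,v)-\steadyStatePar)$ is continuous in the value $v$ --- precisely because \eqref{eq:stochSteadyState} forces $\fluxFunction-\steadyStatePar$ to vanish where $\sign$ jumps --- and conclude by dominated convergence along $L^{\infty}$-convergent sequences. Both proofs hinge on exactly this cancellation (your remark that continuity would fail with a generic constant in place of $\steadyStatePar$ is the structural reason for adapted entropies); the paper's computation buys an explicit Lipschitz modulus, yours buys a shorter, more conceptual argument yielding sequential continuity, which suffices since $\cX$ is normed, hence metric. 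For measurability in $\stochVar$, the paper checks measurability of the integrands pointwise in $(\spaceVar,\timeVar)$ (via \ref{as:B-4}, \ref{as:B-5} and Corollary \ref{cor:measurability_steadyState}) and then composes with integration over $\operatorname{supp}\phi$ viewed as a continuous linear operation, whereas you take the textbook route of joint measurability plus Fubini--Tonelli. The thinnest spot in your sketch is the promotion from separate to joint measurability of $\fluxFunction$ and $\invFlux$ when the exceptional set $\nullSet(\stochVar)$ varies with $\stochVar$: the standard Carath\'eodory lemma does not apply verbatim there. It can be repaired along exactly the line you indicate --- approximate $\fluxFunction$ by functions piecewise constant in $\spaceVar$ over a fixed countable partition; each approximant is jointly measurable by \ref{as:B-4}, and their limit superior agrees with $\fluxFunction$, for every $\stochVar$, at all $\spaceVar \notin \nullSet(\stochVar)$, hence a.e.\ in $\spaceVar$, which is all Tonelli needs --- but this step deserves to be written out. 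Note that the paper's own argument has a comparable gap (composing ``integration is continuous'' with pointwise measurability tacitly requires measurability of $\stochVar \mapsto$ integrand as an $L^{1}(\operatorname{supp}\phi)$-valued map), so this is not a defect of your approach relative to the published proof.
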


\begin{proof}
	We divide the step into two steps: we first show the continuity with respect to $\nu \in \cX$ and afterwards the measurability in $\omega \in \Omega$.
	
	\textit{Step 1:}
	We start by showing the continuity of $J^{\alpha}$ w.r.t. $\nu \in \cX$.
	The continuity of the first integral \eqref{eq:entropyIntegral-1} and the third integral \eqref{eq:entropyIntegral-3} is obvious.
	For the second integral \eqref{eq:entropyIntegral-2}, we explicitly show the continuity: \\
	Let $\stochVar \in \stochDomain$ and $\alpha \in [\imageBound, \infty)$ (or $(-\infty, \imageBound]$) be fixed and let $\nu, \tilde{\nu} \in \cX$ with $\norm{\nu - \tilde{\nu}}_{\cX} < \delta$, for some $\delta > 0$. 
	Consider now
	\begin{equation}
		\begin{aligned}
			\abs{G^{\alpha}(\omega, \nu) - G^{\alpha}(\omega, \tilde{\nu})} &= \Big| \int \Big(\sign (\nu(\spaceVar, \timeVar) - \stochSteadyStateSolutions) (\fluxFunction(\stochVar, \spaceVar, \nu(\spaceVar, \timeVar)) - \steadyStatePar) \\
			&\qquad \qquad \qquad - \sign (\tilde{\nu}(\spaceVar, \timeVar) - \stochSteadyStateSolutions) (\fluxFunction(\stochVar, \spaceVar, \tilde{\nu}(\spaceVar, \timeVar)) - \steadyStatePar) \Big) \spaceDeriv \phi(x, t) \d t \d x \Big| ,
		\end{aligned}
	\end{equation}
	where $G^{\alpha}(\omega, \nu)$ is defined as in Equation \eqref{eq:G_defintion}.
	Let us define the following three sets:
	\begin{equation}
		\begin{aligned}
			D^{=} &\coloneqq \set{x \in \bbR | \sign (\nu(\spaceVar, \timeVar) - \stochSteadyStateSolutions) = \sign (\tilde{\nu}(\spaceVar, \timeVar) - \stochSteadyStateSolutions)} , \\
			D^{\entropyConstant} &\coloneqq \set{x \in \bbR | \nu(\spaceVar, \timeVar) = \stochSteadyStateSolutions \text{ or } \tilde{\nu}(\spaceVar, \timeVar) = \stochSteadyStateSolutions} , \\
			D^{\pm} &\coloneqq \set{x \in \bbR | \sign (\nu(\spaceVar, \timeVar) - \stochSteadyStateSolutions) = - \sign (\tilde{\nu}(\spaceVar, \timeVar) - \stochSteadyStateSolutions)} .
		\end{aligned}
	\end{equation}
	
	Note that in general we have $D^{=} \cap D^{\entropyConstant} \cap D^{\pm} \neq \emptyset$, but we have $\bbR = D^{=} \cup D^{\entropyConstant} \cup D^{\pm}$. 
	Thus, we can write
	\begin{subequations}
		\label{eq:continuityEstimate}
		\begin{align}
			\abs{G^{\alpha}(\omega, \nu) - G^{\alpha}(\omega, \tilde{\nu})} &= \Big| \int \Big(\sign (\nu(\spaceVar, \timeVar) - \stochSteadyStateSolutions) (\fluxFunction(\stochVar, \spaceVar, \nu(\spaceVar, \timeVar)) - \steadyStatePar) \\
			&\qquad \qquad \qquad - \sign (\tilde{\nu}(\spaceVar, \timeVar) - \stochSteadyStateSolutions) (\fluxFunction(\stochVar, \spaceVar, \tilde{\nu}(\spaceVar, \timeVar)) - \steadyStatePar) \Big) \spaceDeriv \phi(x, t) \d t \d x \Big| \\
			\begin{split}
				&\leq \int_{D^{=} \times \bbT} \Big| \sign (\nu(\spaceVar, \timeVar) - \stochSteadyStateSolutions) (\fluxFunction(\stochVar, \spaceVar, \nu(\spaceVar, \timeVar)) - \steadyStatePar) \label{eq:integral_D=} \\
				&\qquad \qquad \qquad - \sign (\tilde{\nu}(\spaceVar, \timeVar) - \stochSteadyStateSolutions) (\fluxFunction(\stochVar, \spaceVar, \tilde{\nu}(\spaceVar, \timeVar)) - \steadyStatePar) \Big| \abs{\spaceDeriv \phi(x, t)} \d t \d x 
			\end{split}\\
			\begin{split}
				&\qquad + \int_{D^{\entropyConstant} \times \bbT} \Big| \sign (\nu(\spaceVar, \timeVar) - \stochSteadyStateSolutions) (\fluxFunction(\stochVar, \spaceVar, \nu(\spaceVar, \timeVar)) - \steadyStatePar) \label{eq:integral_Dm} \\
				&\qquad \qquad \qquad - \sign (\tilde{\nu}(\spaceVar, \timeVar) - \stochSteadyStateSolutions) (\fluxFunction(\stochVar, \spaceVar, \tilde{\nu}(\spaceVar, \timeVar)) - \steadyStatePar) \Big| \abs{\spaceDeriv \phi(x, t)} \d t \d x
			\end{split}\\
			\begin{split}
				&\qquad + \int_{D^{\pm} \times \bbT} \Big| \sign (\nu(\spaceVar, \timeVar) - \stochSteadyStateSolutions) (\fluxFunction(\stochVar, \spaceVar, \nu(\spaceVar, \timeVar)) - \steadyStatePar) \label{eq:integral_Dpm} \\
				&\qquad \qquad \qquad - \sign (\tilde{\nu}(\spaceVar, \timeVar) - \stochSteadyStateSolutions) (\fluxFunction(\stochVar, \spaceVar, \tilde{\nu}(\spaceVar, \timeVar)) - \steadyStatePar) \Big| \abs{\spaceDeriv \phi(x, t)} \d t \d x.
			\end{split}
		\end{align}
	\end{subequations}
	
	When considering the integral \eqref{eq:integral_D=}, we obtain
	\begin{equation}
		\begin{aligned}
			\int\limits_{D^{=} \times \bbT} \Big| \Big( \sign &(\nu(\spaceVar, \timeVar) - \stochSteadyStateSolutions) (\fluxFunction(\stochVar, \spaceVar, \nu(\spaceVar, \timeVar)) - \steadyStatePar) \\
			& - \sign (\tilde{\nu}(\spaceVar, \timeVar) - \stochSteadyStateSolutions) (\fluxFunction(\stochVar, \spaceVar, \tilde{\nu}(\spaceVar, \timeVar)) - \steadyStatePar) \Big) \spaceDeriv \phi(x, t) \Big| \d t \d x \\[5pt]
			&\hspace*{3cm} = \int\limits_{D^{=} \times \bbT} \Big| \Big( (\fluxFunction(\stochVar, \spaceVar, \nu(\spaceVar, \timeVar)) - \steadyStatePar) - (\fluxFunction(\stochVar, \spaceVar, \tilde{\nu}(\spaceVar, \timeVar)) - \steadyStatePar) \Big) \spaceDeriv \phi(x, t) \Big| \d t \d x \\
			&\hspace*{3cm} = \int\limits_{D^{=} \times \bbT} \Big| \fluxFunction(\stochVar, \spaceVar, \nu(\spaceVar, \timeVar)) - \fluxFunction(\stochVar, \spaceVar, \tilde{\nu}(\spaceVar, \timeVar)) \Big| \abs{\spaceDeriv \phi(x, t)} \d t \d x \\
			&\hspace*{3cm} \leq \int\limits_{D^{=} \times \bbT} L_I \abs{\nu(\spaceVar, \timeVar) - \tilde{\nu}(\spaceVar, \timeVar)} \abs{\spaceDeriv \phi(x, t)} \d t \d x \\
			&\hspace*{3cm} \leq C_{\phi} L_I \norm{\nu - \tilde{\nu}}_{L^{\infty}(D^{=} \times \bbT)} .
		\end{aligned}
	\end{equation}
	Here, we first used the definition of $D^{=}$ to estimate the sign functions.
	Afterwards, we used the locally Lipschitz continuity of $\fluxFunction$ in the third argument by Assumption \ref{as:B-3} or \ref{as:B-3alt}.
	Note that the Lipschitz constant depends on the interval $I \subset \bbR$, which contains $\nu(x,t), \tilde{\nu}(x,t)$ for all $(x,t) \in \bbR \times \bbT$. 
	This interval exists, since by hypothesis we have $\nu, \tilde{\nu} \in \cX$, which implies $\nu, \tilde{\nu} \in L^{\infty}(\bbR \times \bbT)$.
	
	Now, before we consider the integral \eqref{eq:integral_Dm}, we define
	\begin{equation}
		\label{eq:Dm_splitting}
		\begin{aligned}
			D^{\entropyConstant}_{\nu} &\coloneqq \set{x \in D^{\entropyConstant} | \nu(\spaceVar, \timeVar) = \stochSteadyStateSolutions} \\
			D^{\entropyConstant}_{\tilde{\nu}} &\coloneqq \set{x \in D^{\entropyConstant} | \tilde{\nu}(\spaceVar, \timeVar) = \stochSteadyStateSolutions} .
		\end{aligned}
	\end{equation}
	With this, we can rewrite the integral \eqref{eq:integral_Dm} as
	\begin{equation}
		\begin{aligned}
			\int\limits_{D^{\entropyConstant} \times \bbT} \Big| \sign (&\nu(\spaceVar, \timeVar) - \stochSteadyStateSolutions) (\fluxFunction(\stochVar, \spaceVar, \nu(\spaceVar, \timeVar)) - \steadyStatePar) \\
			&- \sign (\tilde{\nu}(\spaceVar, \timeVar) - \stochSteadyStateSolutions) (\fluxFunction(\stochVar, \spaceVar, \tilde{\nu}(\spaceVar, \timeVar)) - \steadyStatePar) \Big| \abs{\spaceDeriv \phi(x, t)} \d t \d x \\
			&\hspace*{1cm} = \int\limits_{D^{\entropyConstant}_{\nu} \times \bbT} \Big| \sign (\tilde{\nu}(\spaceVar, \timeVar) - \stochSteadyStateSolutions) (\fluxFunction(\stochVar, \spaceVar, \tilde{\nu}(\spaceVar, \timeVar)) - \steadyStatePar) \Big| \abs{\spaceDeriv \phi(x, t)} \d t \d x \\
			&\hspace*{3cm} + \int\limits_{D^{\entropyConstant}_{\tilde{\nu}} \times \bbT} \Big| \sign (\nu(\spaceVar, \timeVar) - \stochSteadyStateSolutions) (\fluxFunction(\stochVar, \spaceVar, \nu(\spaceVar, \timeVar)) - \steadyStatePar) \Big| \abs{\spaceDeriv \phi(x, t)} \d t \d x \\
			&\hspace*{1cm} \leq \int\limits_{D^{\entropyConstant}_{\nu} \times \bbT} \Big|  \fluxFunction(\stochVar, \spaceVar, \tilde{\nu}(\spaceVar, \timeVar)) - \steadyStatePar \Big| \abs{ \spaceDeriv \phi(x,t)} \d t \d x + \int\limits_{D^{\entropyConstant}_{\tilde{\nu}} \times \bbT} \Big| \fluxFunction(\stochVar, \spaceVar, \nu(\spaceVar, \timeVar)) - \steadyStatePar \Big| \abs{\spaceDeriv \phi(x, t)} \d t \d x .
		\end{aligned}
	\end{equation}
	Now, note that Equation \eqref{eq:Dm_splitting} gives by definition
	\begin{equation}
		\begin{aligned}
			\alpha &= \fluxFunction(\omega, x, \stochSteadyStateSolutions) = \fluxFunction(\stochVar, \spaceVar, \nu(\spaceVar, \timeVar)) \qquad \text{for } x \in D^{\entropyConstant}_{\nu} , \\
			\alpha &= \fluxFunction(\omega, x, \stochSteadyStateSolutions) = \fluxFunction(\stochVar, \spaceVar, \tilde{\nu}(\spaceVar, \timeVar)) \qquad \text{for } x \in D^{\entropyConstant}_{\tilde{\nu}} .
		\end{aligned}
	\end{equation}
	Consequently, we obtain
	\begin{equation}
		\begin{aligned}
			\int\limits_{D^{\entropyConstant} \times \bbT} \Big| \sign (&\nu(\spaceVar, \timeVar) - \stochSteadyStateSolutions) (\fluxFunction(\stochVar, \spaceVar, \nu(\spaceVar, \timeVar)) - \steadyStatePar) \\
			&- \sign (\tilde{\nu}(\spaceVar, \timeVar) - \stochSteadyStateSolutions) (\fluxFunction(\stochVar, \spaceVar, \tilde{\nu}(\spaceVar, \timeVar)) - \steadyStatePar) \Big| \abs{\spaceDeriv \phi(x, t)} \d t \d x \\
			&\hspace*{1cm} \leq \int\limits_{D^{\entropyConstant}_{\nu} \times \bbT} \Big|  \fluxFunction(\stochVar, \spaceVar, \tilde{\nu}(\spaceVar, \timeVar)) - \steadyStatePar \Big| \abs{ \spaceDeriv \phi(x,t)} \d t \d x + \int\limits_{D^{\entropyConstant}_{\tilde{\nu}} \times \bbT} \Big| \fluxFunction(\stochVar, \spaceVar, \nu(\spaceVar, \timeVar)) - \steadyStatePar \Big| \abs{\spaceDeriv \phi(x, t)} \d t \d x \\
			&\hspace*{1cm} = \int_{D^{\entropyConstant}_{\nu} \times \bbT} \Big|  \fluxFunction(\stochVar, \spaceVar, \tilde{\nu}(\spaceVar, \timeVar)) - \fluxFunction(\stochVar, \spaceVar, \nu(\spaceVar, \timeVar)) \Big| \abs{\spaceDeriv \phi(x,t)} \d t \d x \\
			&\hspace*{3cm} + \int_{D^{\entropyConstant}_{\tilde{\nu}} \times \bbT} \Big| \fluxFunction(\stochVar, \spaceVar, \nu(\spaceVar, \timeVar)) - \fluxFunction(\stochVar, \spaceVar, \tilde{\nu}(\spaceVar, \timeVar)) \Big| \abs{\spaceDeriv \phi(x,t)} \d t \d x \\
			&\hspace*{1cm} \leq C_{\phi} L_I \norm{\nu - \tilde{\nu}}_{L^{\infty}(D^{\entropyConstant}_{\nu} \times \bbT)}  + C_{\phi} L_I \norm{\nu - \tilde{\nu}}_{L^{\infty}(D^{\entropyConstant}_{\tilde{\nu}} \times \bbT)} \\
			&\hspace*{1cm} \leq 2 C_{\phi} L_I \norm{\nu - \tilde{\nu}}_{L^{\infty}(D^{\entropyConstant} \times \bbT)} .
		\end{aligned}
	\end{equation}
	It remains to consider the integral \eqref{eq:integral_Dpm}:
	\begin{equation}
		\begin{aligned}
			\int_{D^{\pm} \times \bbT} \Big| \sign (\nu(\spaceVar, \timeVar) - \stochSteadyStateSolutions) (\fluxFunction(&\stochVar, \spaceVar, \nu(\spaceVar, \timeVar)) - \steadyStatePar) \\
			&- \sign (\tilde{\nu}(\spaceVar, \timeVar) - \stochSteadyStateSolutions) (\fluxFunction(\stochVar, \spaceVar, \tilde{\nu}(\spaceVar, \timeVar)) - \steadyStatePar) \Big| \abs{\spaceDeriv \phi(x,t)} \d t \d x.
		\end{aligned}
	\end{equation}
	%By the definition of $D^{\pm}$, we have 
	%\begin{equation}
	%	\label{eq:D_plusminus}
	%	\sign (\nu(\spaceVar, \timeVar) - \stochSteadyStateSolutions) = - \sign (\tilde{\nu}(\spaceVar, \timeVar) - \stochSteadyStateSolutions) \qquad \text{for } x \in D^{\pm} ,
	%\end{equation}
	%which implies
	%\begin{equation}
	%	\label{eq:steadyState-in-L1}
	%	\min (\abs{\nu(\spaceVar, \timeVar)}, \abs{\tilde{\nu}(\spaceVar, \timeVar)}) \leq \abs{\stochSteadyStateSolutions} \leq \max (\abs{\nu(\spaceVar, \timeVar)}, \abs{\tilde{\nu}(\spaceVar, \timeVar)}) .
	%\end{equation}
	%To see this, assume $\abs{\stochSteadyStateSolutions} > \max (\abs{\nu(\spaceVar, \timeVar)}, \abs{\tilde{\nu}(\spaceVar, \timeVar)})$. 
	%However, this would imply
	%\begin{equation}
	%	\sign (\nu(\spaceVar, \timeVar) - \stochSteadyStateSolutions)
	%	=
	%	\left\{\!\begin{aligned}
	%		-1 \quad &\text{for } \stochSteadyStateSolutions \geq 0 \\
	%		1 \quad &\text{for } \stochSteadyStateSolutions \leq 0
	%	\end{aligned}\right\}
	%	=
	%	\sign (\tilde{\nu}(\spaceVar, \timeVar) - \stochSteadyStateSolutions) ,
	%\end{equation}
	%which is a contradiction to the definition of $D^{\pm}$ and thus
	%\begin{equation}
	%\abs{\stochSteadyStateSolutions} \leq \max (\abs{\nu(\spaceVar, \timeVar)}, \abs{\tilde{\nu}(\spaceVar, \timeVar)})
	%\end{equation}
	%has to hold. 
	%Analogously, one can show $\min (\abs{\nu(\spaceVar, \timeVar)}, \abs{\tilde{\nu}(\spaceVar, \timeVar)}) \leq \abs{\stochSteadyStateSolutions}$.
	%Hence, with \eqref{eq:steadyState-in-L1}, we obtain that $\stochSteadyStateSolution{\pm} \in L^{1}(D^{\pm} \times \bbT)$.
	We start with the estimation
	\begin{equation}
		\begin{aligned}
			\int\limits_{D^{\pm} \times \bbT} \Big| \sign (&\nu(\spaceVar, \timeVar) - \stochSteadyStateSolutions) (\fluxFunction(\stochVar, \spaceVar, \nu(\spaceVar, \timeVar)) - \steadyStatePar) \\
			&- \sign (\tilde{\nu}(\spaceVar, \timeVar) - \stochSteadyStateSolutions) (\fluxFunction(\stochVar, \spaceVar, \tilde{\nu}(\spaceVar, \timeVar)) - \steadyStatePar) \Big| \abs{\spaceDeriv \phi(x,t)} \d t \d x \\
			&\hspace*{2cm} \leq \int\limits_{D^{\pm} \times \bbT} \Big| \fluxFunction(\stochVar, \spaceVar, \nu(\spaceVar, \timeVar)) - \steadyStatePar \Big| \abs{\spaceDeriv \phi(x,t)} \d t \d x \\
			&\hspace*{4cm} + \int\limits_{D^{\pm} \times \bbT} \Big| \fluxFunction(\stochVar, \spaceVar, \tilde{\nu}(\spaceVar, \timeVar)) - \steadyStatePar \Big| \abs{\spaceDeriv \phi(x,t)} \d t \d x \\
			&\hspace*{2cm} = \int\limits_{D^{\pm} \times \bbT} \Big| \fluxFunction(\stochVar, \spaceVar, \nu(\spaceVar, \timeVar)) - \fluxFunction(\stochVar, \spaceVar, \stochSteadyStateSolutions) \Big| \abs{\spaceDeriv \phi(x,t)} \d t \d x \\
			&\hspace*{4cm} + \int\limits_{D^{\pm} \times \bbT} \Big| \fluxFunction(\stochVar, \spaceVar, \tilde{\nu}(\spaceVar, \timeVar)) - \fluxFunction(\stochVar, \spaceVar, \stochSteadyStateSolutions) \Big| \abs{\spaceDeriv \phi(x,t)} \d t \d x \\ 
			&\hspace*{2cm} \leq \int\limits_{D^{\pm} \times \bbT} L_I \abs{\nu(\spaceVar, \timeVar) - \stochSteadyStateSolutions} \abs{\spaceDeriv \phi(x,t)} \d t \d x \\
			&\hspace*{4cm} + \int\limits_{D^{\pm} \times \bbT} L_I \abs{\tilde{\nu}(\spaceVar, \timeVar) - \stochSteadyStateSolutions} \abs{\spaceDeriv \phi(x,t)} \d t \d x \\
			&\hspace*{2cm} \leq \int\limits_{D^{\pm} \times \bbT} L_I \Big( \abs{\nu(\spaceVar, \timeVar) - \stochSteadyStateSolutions} + \abs{\tilde{\nu}(\spaceVar, \timeVar) - \stochSteadyStateSolutions} \Big) \abs{\spaceDeriv \phi(x,t)} \d t \d x . \\
		\end{aligned}
	\end{equation}
	Here, we first used the steady state equation \eqref{eq:stochSteadyState} and then the local Lipschitz continuity of $\fluxFunction$ in the third argument.
	Note that the Lipschitz constant is dependent on the interval $I$ containing $\nu(x,t), \tilde{\nu}(x,t)$ and $\stochSteadyStateSolutions$ for all $(x,t) \in \bbR \times \bbT$.
	The interval exists, since $\nu, \tilde{\nu} \in L^{\infty}(\bbR \times \bbT)$ by hypothesis and, for each $\omega, \alpha$, we have $\stochSteadyStateSolutions \in L^{\infty}(\bbR \times \bbT)$ due to Assumption \ref{as:B-2}.
	Note, even though we do not explicitly know that the above integrals are well defined, their existence is ensured via dominated convergence with the subsequent estimation.
	
	By the definition of $D^{\pm}$, we have 
	\begin{equation}
		\label{eq:D_plusminus}
		\sign (\nu(\spaceVar, \timeVar) - \stochSteadyStateSolutions) = - \sign (\tilde{\nu}(\spaceVar, \timeVar) - \stochSteadyStateSolutions) \qquad \text{for } x \in D^{\pm} .
	\end{equation}
	
	For $\xi_1, \xi_2 \in \bbR$ it holds that $\abs{\xi_1 - \xi_2} = \sign(\xi_1 - \xi_2)(\xi_1 - \xi_2)$.
	Together with \eqref{eq:D_plusminus} this yields
	\begin{equation}
		\begin{aligned}
			\int\limits_{D^{\pm} \times \bbT} \Big| \sign (&\nu(\spaceVar, \timeVar) - \stochSteadyStateSolutions) (\fluxFunction(\stochVar, \spaceVar, \nu(\spaceVar, \timeVar)) - \steadyStatePar) \\
			&- \sign (\tilde{\nu}(\spaceVar, \timeVar) - \stochSteadyStateSolutions) (\fluxFunction(\stochVar, \spaceVar, \tilde{\nu}(\spaceVar, \timeVar)) - \steadyStatePar) \Big| \abs{\spaceDeriv \phi(x,t)} \d t \d x \\
			&\hspace*{2cm} \leq \int\limits_{D^{\pm} \times \bbT} L_I \Big( \abs{\nu(\spaceVar, \timeVar) - \stochSteadyStateSolutions} + \abs{\tilde{\nu}(\spaceVar, \timeVar) - \stochSteadyStateSolutions} \Big) \abs{\spaceDeriv \phi(x,t)} \d t \d x \\
			&\hspace*{2cm} = \int\limits_{D^{\pm} \times \bbT} L_I \Big( \sign(\nu(\spaceVar, \timeVar) - \stochSteadyStateSolutions)(\nu(\spaceVar, \timeVar) - \stochSteadyStateSolutions) \\
			&\hspace*{4cm} + \sign(\tilde{\nu}(\spaceVar, \timeVar) - \stochSteadyStateSolutions)(\tilde{\nu}(\spaceVar, \timeVar) - \stochSteadyStateSolutions) \Big) \abs{\spaceDeriv \phi(x,t)} \d t \d x \\
			&\hspace*{2cm} = \int\limits_{D^{\pm} \times \bbT} L_I \Big( \sign(\nu(\spaceVar, \timeVar) - \stochSteadyStateSolutions)(\nu(\spaceVar, \timeVar) - \stochSteadyStateSolutions) \\
			&\hspace*{4cm}  - \sign(\nu(\spaceVar, \timeVar) - \stochSteadyStateSolutions)(\tilde{\nu}(\spaceVar, \timeVar) - \stochSteadyStateSolutions) \Big) \abs{\spaceDeriv \phi(x,t)} \d t \d x \\
			&\hspace*{2cm} = L_I \int\limits_{D^{\pm} \times \bbT} \sign(\nu(\spaceVar, \timeVar) - \stochSteadyStateSolutions) \\
			&\hspace*{4cm} \cdot \Big( (\nu(\spaceVar, \timeVar) - \stochSteadyStateSolutions) - (\tilde{\nu}(\spaceVar, \timeVar) - \stochSteadyStateSolutions) \Big) \abs{\spaceDeriv \phi(x,t)} \d t \d x \\
			&\hspace*{2cm} = L_I \int\limits_{D^{\pm} \times \bbT} \sign(\nu(\spaceVar, \timeVar) - \stochSteadyStateSolutions) \Big( \nu(\spaceVar, \timeVar) - \tilde{\nu}(\spaceVar, \timeVar) \Big) \abs{\spaceDeriv \phi(x,t)} \d t \d x \\
			&\hspace*{2cm} \leq L_I \int\limits_{D^{\pm} \times \bbT} \Big| \nu(\spaceVar, \timeVar) - \tilde{\nu}(\spaceVar, \timeVar) \Big| \abs{\spaceDeriv \phi(x,t)} \d t \d x \\
			&\hspace*{2cm} \leq C_{\phi} L_I \norm{\nu - \tilde{\nu}}_{L^{\infty}(D^{\pm} \times \bbT)}.
		\end{aligned}
	\end{equation}
	
	Consequently, the continuity estimation \eqref{eq:continuityEstimate} reduces to
	\begin{equation}
		\begin{aligned}
			\abs{G^{\alpha}(\omega, \nu) - G^{\alpha}(\omega, \tilde{\nu})} &\leq C_{\phi} L_I \norm{\nu - \tilde{\nu}}_{L^{\infty}(D^{=} \times \bbT)} + 2 C_{\phi} L_I \norm{\nu - \tilde{\nu}}_{L^{\infty}(D^{\entropyConstant} \times \bbT)} + C_{\phi} L_I \norm{\nu - \tilde{\nu}}_{L^{\infty}(D^{\pm} \times \bbT)} \\
			&\leq 4 C_{\phi} L_I \norm{\nu - \tilde{\nu}}_{L^{\infty}(\bbR \times \bbT)} ,
		\end{aligned}
	\end{equation}
	and thus, 
	\begin{equation}
		G^{\alpha}(\omega, \nu) \coloneqq \int \sign (\nu(\spaceVar, \timeVar) - \stochSteadyStateSolutions) (\fluxFunction(\stochVar, \spaceVar, \nu(\spaceVar, \timeVar)) - \steadyStatePar) \spaceDeriv \phi \d t \d x
	\end{equation}
	is continuous in $\nu \in \cX$. 
	
	\textit{Step 2:}
	It remains to show the measurability of $J^{\alpha}$ with resprect to $\omega \in \Omega$.
	Therefore, we first establish that all integrands in Equation \eqref{eq:entropyFunctional} are measurable: \\
	Let $\alpha \in [\imageBound, \infty)$ (or $(-\infty, \imageBound]$) be arbitrary but fixed.
	Note that by Corollary \ref{cor:measurability_steadyState}, the steady state solutions $\stochSteadyStateSolutions$ are measurable as a mapping $\stochVar \mapsto \stochSteadyStateSolutions$.
	Hence, the integrand of \eqref{eq:entropyIntegral-1}, i.e., $\abs{\nu(\spaceVar, \timeVar) - \stochSteadyStateSolutions} \timeDeriv \phi(x, t)$ is measurable.
	Since $\stochSteadyStateSolutions$ is measurable, the measurability of the integrand of \eqref{eq:entropyIntegral-2} is an immediate consequence of Assumption \ref{as:B-4}.
	Finally, the integrand of \eqref{eq:entropyIntegral-3} is measurable, since the steady state solutions $\stochSteadyStateSolutions$ are measurable and $\initialCond \in L^{p}(\Omega, L^{\infty}(\bbR))$ is measurable by assumption.
	
	Now, since $\phi \in C^{\infty}_{c}(\bbR \times \bbT)$ has compact support we can write $J^{\alpha}$ as
	\begin{subequations}
		\label{eq:functional_compactSet}
		\begin{align}
			(\stochVar, \nu) &\mapsto \int\limits_{\supp{\phi}} \abs{\nu(\spaceVar, \timeVar) - \stochSteadyStateSolutions} \timeDeriv \phi(x, t) \d t \d x \\
			&\qquad+ \int\limits_{\supp{\phi}} \sign (\nu(\spaceVar, \timeVar) - \stochSteadyStateSolutions) (\fluxFunction(\stochVar, \spaceVar, \nu(\spaceVar, \timeVar)) - \steadyStatePar) \spaceDeriv \phi(x, t) \d t \d x \\
			&\qquad\qquad + \int\limits_{\supp{\phi}} \abs{\initialCond(\stochVar, \spaceVar) - \stochSteadyStateSolutions} \phi (\spaceVar, 0) \d x .
		\end{align}
	\end{subequations}
	Taking the integral over $\supp{\phi}$ is a bounded linear operator, because $\supp{\phi}$ is compact. 
	Since linear operators are bounded if and only if they are continuous, we know that taking integrals over a compact domain is a continuous operation.
	Now, since the composition of a continuous function with a measurable function is measurable by {\cite[Lemma 4.22]{Aliprantis2006Infinite}}, we obtain the measurability of all three integrals in $\stochVar \in \stochDomain$.
	Therefore, the measurability of $J^{\alpha}$ in $\stochVar \in \stochDomain$ follows and the proof is complete.
\end{proof}

We are now able to state the main result about the measurability of the solution map $\solution: \stochDomain \rightarrow \solutionSpace$.
\begin{theorem}[Measurability of the stochastic entropy solution]
	\label{thm:measurability}
	Let the flux function $\fluxFunction$ satisfy Assumptions \ref{as:stochFlux} and \ref{as:stochFlux_measurability} and let the initial condition satisfy $\initialCond \in L^p (\stochDomain, L^1(\bbR))$.
	Then, the mapping $\solution: \stochDomain \rightarrow \solutionSpace, \stochVar \mapsto \solution(\stochVar, \cdot, \cdot)$ is strongly measurable, which means that it is measurable and $\solution (\stochDomain) \subset \solutionSpace$ is separable.
\end{theorem}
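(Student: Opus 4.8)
The plan is to deduce measurability of $\solution$ from the observation that its graph is a measurable subset of $\stochDomain \times \cX$, after which a measurable graph theorem finishes the argument; the separability requirement is then free, since $\solution(\stochDomain) \subseteq \cX$ and every subset of the separable metric space $\cX$ is itself separable. The starting point is the characterization provided by Theorem~\ref{thm:stochExistenceUniqueness}: for each fixed $\stochVar$, the value $\solution(\stochVar, \cdot, \cdot)$ is the \emph{unique} $\nu \in \cX$ for which $J^{\alpha}(\stochVar, \nu) \geq 0$ holds for every admissible $\alpha$ and every nonnegative test function $\phi \in C^{\infty}_{c}(\bbR \times \bbT; \bbR_{\geq 0})$. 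By this uniqueness, the $\stochVar$-section of the set below is exactly $\set{\solution(\stochVar, \cdot, \cdot)}$, so that
\begin{equation*}
	\operatorname{Graph}(\solution) = \set{(\stochVar, \nu) \in \stochDomain \times \cX : J^{\alpha}(\stochVar, \nu) \geq 0 \text{ for all admissible } \alpha \text{ and all } \phi \geq 0},
\end{equation*}
and it suffices to show that this set belongs to $\cA \otimes \borelSigmaAlgebra(\cX)$.

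The first step is to replace the uncountable family of conditions by a countable one. By the proposition asserting continuity of $J^{\alpha}$ in $\alpha$, it is enough to impose the inequalities for $\alpha$ in a countable dense subset $\set{\alpha_j}$ of $[\imageBound, \infty)$ (resp. $(-\infty, \imageBound]$), as the limit of nonnegative quantities is nonnegative. To reduce the test functions, I fix an exhaustion of $\bbR \times \bbT$ by compact sets $K_m$ and note that, for fixed $\stochVar, \nu, \alpha$, the map $\phi \mapsto J^{\alpha}(\stochVar, \nu)$ is linear in $\phi$ and bounded on $(C^{1}_{c}(K_m), \norm{\cdot}_{C^1})$: its coefficients $\abs{\nu - \stochSteadyStateSolutions}$, $\sign(\nu - \stochSteadyStateSolutions)(\fluxFunction(\stochVar, \spaceVar, \nu) - \alpha)$ and $\abs{\initialCond(\stochVar, \cdot) - \stochSteadyStateSolutions}$ are integrable on compacta by Lemma~\ref{lem:fluxBound} and Assumption~\ref{as:B-2}. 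Hence $\phi \mapsto J^{\alpha}(\stochVar, \nu)$ is $C^1$-continuous, and choosing a countable $C^1$-dense family $\set{\phi_k}$ in the nonnegative cone $\bigcup_m C^{\infty}_{c}(K_m; \bbR_{\geq 0})$ allows us to replace ``for all $\phi \geq 0$'' by ``for all $k$'' without altering the set.

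Next I invoke Proposition~\ref{prop:EntropyFunctional_Caratheodory}: for each fixed $\alpha_j$ and $\phi_k$, the functional $(\stochVar, \nu) \mapsto J^{\alpha_j}(\stochVar, \nu)$ is Carathéodory, i.e. measurable in $\stochVar$ and continuous in $\nu$. Since $\cX$ is separable, every such Carathéodory functional is jointly $\cA \otimes \borelSigmaAlgebra(\cX)$-measurable \cite{Aliprantis2006Infinite}, so each slice $\set{(\stochVar, \nu) : J^{\alpha_j}(\stochVar, \nu) \geq 0}$ is measurable. Consequently,
\begin{equation*}
	\operatorname{Graph}(\solution) = \bigcap_{j} \bigcap_{k} \set{(\stochVar, \nu) : J^{\alpha_j}(\stochVar, \nu) \geq 0 \text{ for } \phi_k}
\end{equation*}
is a countable intersection of measurable sets, hence lies in $\cA \otimes \borelSigmaAlgebra(\cX)$.

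Finally, working if necessary inside the ambient Polish space $C^{0}(\timeInterval; L^{1}_{\text{loc}}(\bbR))$ containing $\cX$, and using that $\probSpace$ is complete, the measurable graph theorem applies: a single-valued map with $\cA \otimes \borelSigmaAlgebra(\cX)$-measurable graph into a (Souslin) separable metric space is measurable \cite{Aliprantis2006Infinite}. This gives measurability of $\solution$, and together with the automatic separability of $\solution(\stochDomain)$ yields strong measurability. I expect the main obstacle to be the second step, namely the passage to countably many test functions: one must check that $J^{\alpha}$ is continuous in $\phi$ in a topology whose nonnegative cone carries a countable dense subset, and handle the noncompact spatial domain through the exhaustion $\set{K_m}$. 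By comparison, the reduction in $\alpha$ and the concluding appeal to the graph theorem are routine, the latter resting only on completeness of the probability space.
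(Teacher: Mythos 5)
Your proposal is correct, and it reaches the conclusion by a genuinely different final mechanism than the paper. The shared core is identical: you reduce the uncountable family of entropy inequalities to a countable one in $\alpha$ via the continuity of $J^{\alpha}$ in $\alpha$ (the paper does this by a contradiction argument, you do it directly by density and passage to the limit, which is cleaner), and you rely on Proposition \ref{prop:EntropyFunctional_Caratheodory} for the Carath\'eodory property. The divergence is in how measurability of the solution map is extracted. The paper stays inside set-valued analysis: it defines the multifunctions $\Xi^{\alpha}_{i,N}(\stochVar) = \set{\nu \in \cX : J^{\alpha}_{i,N}(\stochVar,\nu) \geq 0}$, invokes a measurability result for such maps \cite{Geletu2006Introduction}, intersects countably, and concludes because uniqueness forces $\Xi(\stochVar)$ to be the singleton $\set{\solution(\stochVar,\cdot,\cdot)}$. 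You instead observe that uniqueness identifies $\operatorname{Graph}(\solution)$ with a countable intersection of jointly measurable sets (Carath\'eodory functionals on $\stochDomain \times \cX$ with $\cX$ separable are jointly measurable \cite{Aliprantis2006Infinite}) and then apply the measurable graph theorem, which uses the completeness of $\probSpace$ (assumed in the paper) and requires $\cX$ to sit as a Borel/Souslin subset of a Polish space --- a point you gesture at but do not verify; it does hold, since $\cX$ is an $F_\sigma$ subset of $C^{0}(\timeInterval; L^{1}_{\text{loc}}(\bbR))$, as the sets $\set{\norm{\nu}_{L^\infty} \leq n}$ are closed there. In exchange, your treatment of the reduction to countably many test functions is more rigorous than the paper's: the paper takes a ``countable basis'' of each $S_N$ and never argues why nonnegativity of $J^{\alpha}$ against those basis elements implies nonnegativity against \emph{all} nonnegative test functions (a linear basis does not respect the cone structure), whereas your argument --- linearity and $C^1$-boundedness of $\phi \mapsto J^{\alpha}$ on functions supported in a fixed compactum, plus a countable dense subset of the nonnegative cone itself --- is exactly the justification that step needs. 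In short: both proofs rest on the same countability reductions and the same Carath\'eodory input; the paper packages the conclusion in measurable-multifunction language, you in projection-theorem language, and your version fills a gap in the test-function reduction while leaving a small (but repairable) gap about the ambient Polish structure of $\cX$.
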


\begin{proof}
	For $N \in \bbN$, define the space
	\begin{equation}
		S_N \coloneqq \set{\phi \in C^{\infty}(\bbR \times \bbT; \bbR_{\geq0}): \supp(\phi) \subseteq \set{(x, t) \in \bbR \times \bbT: \abs{x}, t \leq N}} .
	\end{equation}
	Each space $S_N$ is a subspace of $C^{\infty}(\bbR \times \bbT; \bbR_{\geq0})$ and thus has a countable basis, since $C^{\infty}$ has a countable basis.
	Note that for every $\phi \in C^{\infty}_{c}(\bbR \times \bbT; \bbR_{\geq0})$, there exists $S_K$, $K \in \bbN$, such that $\phi \in S_K$.
	
	Now, for fixed $N \in \bbN$, let $(\varphi^{i}_{N}, i \in \bbN) \subset S_N$  be a basis of $S_N$.
	For fixed $i \in \bbN$ and $\steadyStatePar \in [\imageBound, \infty)$ (or $\steadyStatePar \in (-\infty, \imageBound]$), define the functional
	\begin{align*}
		J^{\alpha}_{i, N} &: \stochDomain \times \cX \rightarrow \bbR , \\
		(\stochVar, \nu) &\mapsto \int \abs{\nu(\spaceVar, \timeVar) - \stochSteadyStateSolutions} \timeDeriv \varphi^{i}_{N} (x,t) \d t \d x \\
		&\hspace*{2cm} + \int \sign (\nu(\spaceVar, \timeVar) - \stochSteadyStateSolutions) (\fluxFunction(\stochVar, \spaceVar, \nu(\spaceVar, \timeVar)) - \steadyStatePar) \spaceDeriv \varphi^{i}_{N} (x,t) \d t \d x \\
		&\hspace*{3cm} + \int \abs{\initialCond(\stochVar, \spaceVar) - \stochSteadyStateSolutions} \varphi^{i}_{N} (\spaceVar, 0) \d x .
	\end{align*}
	
	Due to Proposition \ref{prop:EntropyFunctional_Caratheodory}, the functional $J^{\alpha}_{i, N}$ is a Carathéodory map, i.e., it is measurable w.r.t. $\omega \in \stochDomain$ and continuous w.r.t. $w \in \cX$.
	Note, this implies $-J^{\alpha}_{i, N}$ being a Carathéodory map.
	To show the measurability of $\solution$, we define the set-valued map $\Xi^{\alpha}_{i, N}: \stochDomain \rightrightarrows \cX$
	\begin{align*}
		\Xi^{\alpha}_{i, N}(\stochVar) = \{ \nu \in \cX | J^{\alpha}_{i, N}(\omega, \nu) \geq 0 \} , \text{ for } \stochVar \in \stochDomain .
	\end{align*}
	By {\cite[Prop. 6.3.4]{Geletu2006Introduction}}, this set-valued map $\Xi^{\alpha}_{i, N}(\stochVar)$ is measurable, since $\cX = \solutionSpace$ is separable.
	
	Define now the set-valued map $\Xi^{\alpha}: \stochDomain \rightrightarrows \cX$ via
	\begin{equation}
		\Xi^{\alpha}(\stochVar) = \bigcap_{i \in \bbN} \bigcap_{N \in \bbN} \Xi^{\alpha}_{i, N}(\stochVar) ,
	\end{equation}
	which is measurable as a countable intersection of measurable maps.
	
	We are now able to define the set-valued map $\Xi: \stochDomain \rightrightarrows \cX$ as
	\begin{equation}
		\Xi(\stochVar) = \bigcap_{\alpha \in [\imageBound, \infty) \cap \bbQ} \Xi^{\alpha}(\stochVar) ,
	\end{equation}
	which is also measurable as a countable intersection of measurable maps.
	Note, if $\steadyStatePar \in (-\infty, \imageBound]$, we need to intersect over $\steadyStatePar \in (-\infty, \imageBound] \cap \bbQ$ instead of $\alpha \in [\imageBound, \infty) \cap \bbQ$.
	
	Note, since the adapted entropy functionals $J^{\alpha}_{i, N}$ depend continuously on $\alpha \in [\imageBound, \infty)$ it is sufficient to only consider $\alpha \in [\imageBound, \infty) \cap \bbQ$ here, since the rational numbers are dense in $\bbR$.
	To see this, let us first note that the intersection over $\alpha$ is necessary to select the adapted entropy solution, which satisfies the adapted entropy condition for all $\alpha \in [\imageBound, \infty)$. 
	Let us now assume, that for $\alpha \in [\imageBound, \infty) \cap \bbQ$ the set $\Xi^{\alpha}(\stochVar)$ does contain the adapted entropy solution $u$ and an additional function $\tilde{u}$, satisfying the entropy condition for these $\alpha$.
	This implies, as a direct consequence of the existence and uniqueness theorem \ref{thm:stochExistenceUniqueness}, that for $\gamma \in [\imageBound, \infty) \setminus \bbQ$, the image of $\Xi^{\gamma}(\stochVar)$ does only contain the adapted entropy solution. 
	By definition of the adapted entropy functional, this means that there exist $i_{-}, N_{-} \in \bbN$ such that
	\begin{align*}
		J^{\alpha}_{i, N}(\stochVar, \tilde{u}) &\geq 0 \qquad \forall i, N \in \bbN , \\
		J^{\gamma}_{i_{-}, N_{-}}(\stochVar, \tilde{u}) &< 0 \qquad \text{for } i_{-}, N_{-} \in \bbN .
	\end{align*}
	Since $J^{\gamma}_{i_{-}, N_{-}}$ is continuous in $\gamma$, we know that there exist an $\varepsilon$-neighbourhood $\cU_{\varepsilon}$ of $J^{\gamma}_{i_{-}, N_{-}}(\stochVar, \tilde{u})$ such that for every $\xi \in \cU_{\varepsilon}$, we have $\xi < 0$.
	From $\bbQ$ being dense in $\bbR$ and the continuity of $J^{\gamma}_{i_{-}, N_{-}}$ in $\gamma$, we know that there exist $\delta > 0$ and $\tilde{\gamma} \in [\imageBound, \infty) \cap \bbQ$, such that $\abs{\gamma - \tilde{\gamma}} < \delta$ and $J^{\tilde{\gamma}}_{i_{-}, N_{-}}(\stochVar, \tilde{u}) \in \cU_{\varepsilon}$. 
	However, $J^{\tilde{\gamma}}_{i_{-}, N_{-}}(\stochVar, \tilde{u}) \in \cU_{\varepsilon}$ implies that $J^{\tilde{\gamma}}_{i_{-}, N_{-}}(\stochVar, \tilde{u}) < 0$. 
	But this is a contradiction to $J^{\alpha}_{i, N}(\stochVar, \tilde{u}) \geq 0$ for all $i, N \in \bbN$ and all $\alpha \in [\imageBound, \infty) \cap \bbQ$.
	Consequently, it is sufficient to intersect over $\alpha \in [\imageBound, \infty) \cap \bbQ$ to select the adapted entropy condition.
	
	Due to the pathwise existence and uniqueness theorem \ref{thm:stochExistenceUniqueness}, we know that for fixed $\stochVar \in \stochDomain$, $\Xi(\stochVar)$ only contains the pathwise unique adapted entropy solution of \eqref{eq:stochasticConservationLaw}.
	Thus, the solution map $\solution: \stochDomain \rightarrow \cX$ is measurable.
\end{proof}

\section{Stochastic jump coefficient}
\label{sec:stochJumpCoeff}

In this section, we focus on the specific choice of a multiplicative flux function (see also Example \ref{ex:multiplicativeFlux} and \ref{ex:multiplicativeFlux_measurability}), where the stochastic coefficient models heterogeneities in a medium.
For this jump coefficient, we use the random coefficient introduced in \cite{Barth2018study, Barth2020Numerical} for an elliptic diffusion problem.
It consists of a (spatial) Gaussian random field with additive discontinuities on random submanifolds.
Since the coefficient is intended to model fractured or heterogeneous media, it is assumed to be time-independent. \\
In general, it will not be possible to sample the coefficient exactly.
Therefore, we also introduce suitable approximations of the coefficient.

\begin{definition}[Stochastic jump coefficient]
	\label{def:stochasticJumpCoefficient}
	
	Let $\domainOfInterest \subset \bbR$ be a compact subset. 
	We define the stochastic jump coefficient as a function
	\begin{equation}
		\label{eq:stochasticJumpCoefficient}
		\begin{aligned}
			\jumpCoeff: \stochDomain \times \bbR &\rightarrow \bbR_{> 0} \ , \\
			(\stochVar, \spaceVar) &\mapsto \meanField(\spaceVar) + \gaussFunctional (\gaussField(\stochVar, \spaceVar)) + \jumpField (\stochVar, \spaceVar) \ ,
		\end{aligned}
	\end{equation}
	where
	\begin{itemize}
		\item $\meanField \in C(\bbR; \bbR_{\geq 0})$ is a deterministic, uniformly bounded mean function.
		\item $\phi \in C^{1}(\bbR; \bbR_{>0})$ is a continuously differentiable, positive mapping.
		\item For a (zero-mean) Gaussian random field $\gaussField_{\bbR} \in L^2(\stochDomain; L^2(\bbR))$ associated to a non-negative, symmetric trace class (covariance) operator $\covarianceOperator: L^2(\bbR) \rightarrow L^2(\bbR)$, the truncated Gaussian random field $\gaussField \in L^{2}(\stochDomain; L^2(\bbR))$ is defined as
		\begin{equation}
			\label{eq:truncatedGaussField}
			\begin{aligned}
				\gaussField (\stochVar, \spaceVar) = 
				\begin{cases}
					\gaussField_{\bbR} (\stochVar, \spaceVar) , & \spaceVar \in \domainOfInterest \\
					\min \left( \gaussField_{\bbR} (\stochVar, \spaceVar), \sup_{\spaceVar \in \domainOfInterest} \gaussField_{\bbR} (\stochVar, \spaceVar) \right) , & \spaceVar \in \bbR \setminus \domainOfInterest .
				\end{cases}
			\end{aligned}
		\end{equation}
		\item $\partition: \stochDomain \rightarrow \borelSigmaAlgebra(\domainOfInterest), \ \stochVar \mapsto \{ \partition_1, \ldots, \partition_{\numberOfJumps} \}$ is a random partition of $\domainOfInterest$, i.e., the $\partition_i$ are disjoint open subsets of $\domainOfInterest$ with $\overline{\domainOfInterest} = \bigcup_{i=1}^{\numberOfJumps} \overline{\partition_i}$. The number of elements in $\partition$ is a random variable $\numberOfJumps: \stochDomain \rightarrow \bbN$ on $\probSpace$.\\
		For $\leftDomainBound$ and $\rightDomainBound$ being the left and right boundary of $\domainOfInterest$, respectively, we define $\partition_0 := (-\infty, \leftDomainBound)$ and $\partition_{\numberOfJumps+1} := (\rightDomainBound, +\infty)$.
		\item A measure $\jumpMeasure$ on $\domainOfInterest, \borelSigmaAlgebra(\domainOfInterest)$ is associated to $\partition$ and controls the position of the random elements $\partition_i$.
		\item $(\jumpHeights{i}, i \in \bbN_{0})$ is a sequence of random variables on $\probSpace$ with arbitrary positive distribution(s) satisfying $\jumpHeights{i} < \infty$ for every $\stochVar \in \stochDomain$.
		Further, the sequence $(\jumpHeights{i}, i \in \bbN_{0})$ is independent of $\numberOfJumps$ (but not necessarily i.i.d.) and we have
		\begin{equation}
			\label{eq:defJumpField}
			\jumpField: \stochDomain \times \domainOfInterest \rightarrow \bbR_{>0}, \quad (\stochVar, \spaceVar) \mapsto \sum_{i = 0}^{\numberOfJumps+1} \boldsymbol{1}_{\partition_i}(x) \jumpHeights{i}(\stochVar) \ .
		\end{equation}
	\end{itemize}
\end{definition}
\begin{remark}
	We note that we do not require the truncated Gaussian random field $W$ and the jump field $P$ to be stochastically independent.
	
	Further, the measure $\jumpMeasure$ associated to $\partition$ does not only affect the average number of partition elements $\bbE (\numberOfJumps)$, but also the size of the partition elements $\partition_i$.
\end{remark}

\begin{corollary}[Assumptions on stochastic jump coefficient are satisfied]
	The stochastic jump coefficient defined in Definition \ref{def:stochasticJumpCoefficient} satisfies the assumptions for the stochastic flux function, see Example \ref{ex:multiplicativeFlux}.
\end{corollary}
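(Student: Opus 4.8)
The plan is to check, one by one, the conditions to which Example~\ref{ex:multiplicativeFlux} reduces Assumption~\ref{as:stochFlux} for a multiplicative flux $\fluxFunction(\stochVar,\spaceVar,\rho)=\jumpCoeff(\stochVar,\spaceVar)\solFlux(\rho)$: that the paths $\jumpCoeff(\stochVar,\cdot)$ are continuous off a closed null set (giving \ref{as:B-1}), that they are pathwise bounded away from $0$ and $\infty$ (giving \ref{as:B-2}), and that the scalar factor $\solFlux$ is locally Lipschitz and either monotone or convex/concave with a zero (giving \ref{as:B-3alt} or \ref{as:B-3}). The last item is imposed directly in Example~\ref{ex:multiplicativeFlux}, so the work concentrates on the two pathwise properties of $\jumpCoeff=\meanField+\gaussFunctional(\gaussField)+\jumpField$ from \eqref{eq:stochasticJumpCoefficient}.

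First I would treat continuity. The mean $\meanField$ is continuous by definition, and $\gaussFunctional\in C^{1}(\bbR;\bbR_{>0})$ is continuous, so $\gaussFunctional(\gaussField(\stochVar,\cdot))$ is continuous as soon as $\gaussField(\stochVar,\cdot)$ is; this follows from sample-path continuity of the underlying field $\gaussField_{\bbR}$ (guaranteed by the regularity of the covariance operator $\covarianceOperator$), since the truncation \eqref{eq:truncatedGaussField} composes $\gaussField_{\bbR}$ with the continuous map $s\mapsto\min(s,c)$ and matches continuously across $\partial\domainOfInterest$ (a boundary point lies in the compact set $\domainOfInterest$, where $\gaussField_{\bbR}\le\sup_{\domainOfInterest}\gaussField_{\bbR}$). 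The only discontinuities then come from the jump field $\jumpField$ of \eqref{eq:defJumpField}, which is piecewise constant and jumps exactly on the interface set $\discontinuitySet(\stochVar):=\bigcup_{i}\partial\partition_i$. Since $\numberOfJumps(\stochVar)<\infty$, this set is finite, hence closed and of Lebesgue measure zero, and I would set $\nullSet(\stochVar):=\discontinuitySet(\stochVar)$, verifying \ref{as:B-1}.

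Next I would establish the pathwise bounds. For the lower bound, $\meanField\ge0$ and $\gaussFunctional>0$, while the extended partition $\{\partition_0,\dots,\partition_{\numberOfJumps+1}\}$ covers $\bbR$, so for each $\spaceVar\notin\nullSet(\stochVar)$ one has $\jumpField(\stochVar,\spaceVar)=\jumpHeights{i}(\stochVar)$ for some $i$; as all jump heights are positive and finite in number, $\jumpCoeff(\stochVar,\spaceVar)\ge\min_{0\le i\le\numberOfJumps+1}\jumpHeights{i}(\stochVar)=:\coeffLowerBound(\stochVar)>0$. For the upper bound, $\meanField$ is uniformly bounded, $\jumpField\le\max_{0\le i\le\numberOfJumps+1}\jumpHeights{i}(\stochVar)<\infty$, and the one-sided truncation \eqref{eq:truncatedGaussField} gives $\gaussField(\stochVar,\spaceVar)\le\sup_{\spaceVar\in\domainOfInterest}\gaussField_{\bbR}(\stochVar,\spaceVar)=:M_{\gaussField}(\stochVar)$, finite because a continuous path is bounded on the compact $\domainOfInterest$; monotonicity and continuity of $\gaussFunctional$ then bound $\gaussFunctional(\gaussField(\stochVar,\cdot))$ above by $\gaussFunctional(M_{\gaussField}(\stochVar))$. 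Summing the three contributions yields a finite $\coeffUpperBound(\stochVar)$, verifying \ref{as:B-2}.

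The main obstacle is the Gaussian term: everything hinges on $\gaussField_{\bbR}$ admitting a continuous modification and on $\sup_{\domainOfInterest}\gaussField_{\bbR}(\stochVar,\cdot)$ being pathwise finite. This is precisely what the one-sided truncation in \eqref{eq:truncatedGaussField} is engineered to exploit: it converts the (possibly unbounded-above) field into one bounded above by its supremum over the compact $\domainOfInterest$, so that $\gaussFunctional(\gaussField)$ stays bounded even though $\gaussField$ need not be bounded below, the lower bound on $\jumpCoeff$ being supplied by $\jumpField$ rather than by $\gaussFunctional(\gaussField)$. I would therefore spell out that the assumptions on $\covarianceOperator$ force sample-path continuity (e.g.\ via a Kolmogorov-type criterion), restricting if necessary to a set of full probability so that the pathwise statements \ref{as:B-1}--\ref{as:B-2} hold for every admissible $\stochVar$, and I would make explicit that the upper bound uses $\gaussFunctional$ being non-decreasing (as in the standard choice $\gaussFunctional=\exp$), which is what closes the estimate.
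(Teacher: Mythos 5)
Your proof is correct and takes essentially the same route as the paper's: continuity of $\jumpCoeff(\stochVar,\cdot)$ off the finite (hence closed, measure-zero) set of jump interfaces using a continuous modification of the Gaussian field, a positive lower bound supplied by the positive jump heights together with $\meanField \geq 0$ and $\gaussFunctional > 0$, and an upper bound from the boundedness of $\meanField$, the finitely many jump heights, and the truncation of $\gaussField_{\bbR}$ over the compact set $\domainOfInterest$. Your additional observation that passing from $\gaussField(\stochVar,\spaceVar) \leq W_{+}$ to a bound on $\gaussFunctional(\gaussField(\stochVar,\spaceVar))$ actually requires $\gaussFunctional$ to be non-decreasing (or bounded on $(-\infty, W_{+}]$), as with the standard choice $\gaussFunctional = \exp$, is a point the paper's own proof passes over silently, so making it explicit is a refinement rather than a deviation.
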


\begin{proof}
	To satisfy the assumptions for a multiplicative flux function, the stochastic coefficient $\jumpCoeff (\stochVar, \spaceVar)$ has to be continuous in $\spaceVar \in \bbR$ except on a closed set $\nullSet$ of measure zero (which might depend on $\stochVar \in \stochDomain$).
	Further, the coefficient needs to have positive spatially bounded paths, where the bounds might be stochastic. 
	
	Since the number of elements $\numberOfJumps$ in the partition $\partition$ is finite for every $\stochVar \in \stochDomain$, the jump field $\jumpField (\stochVar, \cdot)$ is a piecewise constant function with finitely many disconitnuities.
	Further, since $\domainOfInterest$ is compact, the set $\mathfrak{D}(\stochVar)$ of discontinuities is closed and has measure zero.
	Thus, since the deterministic mean function $\meanField$ is continuous by assumption and we can always consider a continuous modification of the truncated Gaussian random field $\gaussField$, it follows that $\jumpCoeff (\stochVar, \spaceVar)$ is continuous in $\spaceVar \in \bbR$, except on a closed set $\mathfrak{D}(\stochVar, \cdot)$ of measure zero.
	
	It remains to show that the coefficient $\jumpCoeff$ has positive spatially bounded paths:
	We have that $\jumpHeights{i} > 0$ for all $i \in \bbN_0$ by construction. 
	Thus, there exists a $\coeffLowerBound (\stochVar) > 0$, such that $\jumpField(\stochVar, \spaceVar) \geq \coeffLowerBound (\stochVar) > 0$ for every $\spaceVar \in \bbR$.
	Since $\meanField$ and $\phi$ are nonnegative, we have that $\jumpCoeff(\stochVar, \spaceVar) \geq \coeffLowerBound (\stochVar) > 0$
	
	Further, since $\numberOfJumps (\stochVar) < \infty$, for each $\stochVar \in \stochDomain$ there exists $P_{+} (\stochVar) < \infty$ with $\jumpField(\stochVar, \spaceVar) \leq \max_{0 \leq i \leq \numberOfJumps (\stochVar)+1} \jumpHeights{i}(\stochVar) \eqqcolon P_{+} (\stochVar)$.
	Since the Gaussian random field $\gaussField_{\bbR}(\stochVar, \spaceVar)$ is continuous, it is bounded on $\domainOfInterest$.
	Then, by construction, it follows that there exists $W_{+} < \infty$ such that $\gaussField (\stochVar, \spaceVar) \leq W_{+}$.
	Since the mean function $\meanField$ is bounded by assumption, we have shown that the stochastic jump coefficient from Definition \ref{def:stochasticJumpCoefficient} has positive spatially bounded paths, which proves the assertion.
\end{proof}
\begin{remark}[Alternative construction of the Gaussian random field]
	\label{rem:alternativeGaussFieldConstruction}
	The rather complicated construction of the random field $\gaussField$ via the domain of interest $\domainOfInterest$ is necessary to ensure the boundedness of the spatial paths of the coefficient.
	Alternatively, it is possible to replace $\phi(\gaussField(\stochVar, \spaceVar))$ by $\phi(\gaussField_{\bbR}(\stochVar, \spaceVar))$ in Equation \eqref{eq:stochasticJumpCoefficient}, if one additionally assumes $\phi \in C^{1}(\bbR; \bbR_{>0})$ to be bounded.
\end{remark}

\subsection{Numerical approximation of the stochastic jump coefficient}

In general, the structure of the stochastic jump coefficient does not allow us to sample from its exact distribution. 
To approximate the Gaussian field $\gaussField_{\bbR}$, one can use the Karhunen-Lo\`{e}ve expansion:\\
Let $((\eigenValue_{i}, \eigenFunction_{i}), i \in \bbN)$ denote a sequence of eigenpairs of the covariance operator $\covarianceOperator$. 
Here, we assume the eigenvalues to be given in a decaying order, i.e., $\eigenValue_{1} \geq \eigenValue_{2} \geq \cdots \geq 0$.
As the covariance operator is trace class, the Gaussian random field $\gaussField_{\bbR}$ admits the representation
\begin{equation}
	\label{eq:KarhunenLoeve}
	\gaussField_{\bbR} = \sum_{i \in \bbN} \sqrt{\eigenValue_{i}} \eigenFunction_{i} \KLRV_{i} \ ,
\end{equation}
where $(\KLRV, i \in \bbN)$ is a sequence of independent standard normally distributed random variables.
An obvious approximation of the random field $\gaussField_{\bbR}$ is then given by the truncated Karhunen-Lo\`{e}ve expansion $\approxGaussField$ of $\gaussField_{\bbR}$, which is defined as
\begin{equation}
	\label{eq:truncatedKarhunenLoeve}
	\approxGaussField := \sum_{i = 1}^{\KLindex} \sqrt{\eigenValue_{i}} \eigenFunction_{i} \KLRV_{i} \ .
\end{equation}
Here, $\KLindex \in \bbN$ is called the \textit{cut-off index} of $\approxGaussField$.
Now, with this approximation of $\gaussField_{\bbR}$, we can apply the truncation \eqref{eq:truncatedGaussField} to obtain an approximation of the truncated Gaussian random field $\gaussField$.
Note, if we use the construction proposed in Remark \ref{rem:alternativeGaussFieldConstruction} for $\gaussField$, instead of applying the truncation to $\gaussField_{\bbR}$, we can directly use the truncated Karhunen-Lo\'{e}ve expansion $\approxGaussField$ as an approximation.

\section{Numerical Experiments}

In this section, we demonstrate, how the unknown $\solution$ may be approximated. 
Therefore, in Section \ref{ssec:adaptiveDiscretization}, we introduce a sample-adapted Finite Volume scheme. 
Here, sample-adaptivity means that the Finite Volume mesh is aligned \textit{a-priori} with the discontinuities of the stochastic jump coefficient $\jumpCoeff$. 
Consequently, the resulting discretization is stochastic, i.e., the mesh changes for each $\stochVar \in \stochDomain$.
This is in contrast to classical \textit{adaptive} Finite Volume methods, which are based on a-posteriori error estimates and remeshing the grid various times for each sample.

Afterwards, we investigate the approximations of the solution of the stochastic conservation law \eqref{eq:stochasticConservationLaw} with the specific flux function proposed in Example \ref{ex:multiplicativeFlux} combined with the stochastic jump coefficient introduced in Section \ref{sec:stochJumpCoeff}.
Therefore, we conduct experiments showing the performance of different numerical approximations and show the influence of different types of random fields on the pathwise solution of the stochastic conservation law. 
We conclude this section by investigating the strong error of the solution and how it is affected by the various parameters of the stochastic jump coefficient.

To the best of the author's knowledge, writing a general numerical scheme to compute the adapted entropy solution in the sense of Audusse and Perthame \cite{Audusse2005Uniqueness} is still an open problem. 
However, Godunov-type methods have been successfully applied to hyperbolic conservation laws with discontinuous flux functions for many other notions of solutions, see \cite{Adimurthi2005Godunov} for so-called solutions of type $(A, B)$ or \cite{Towers2000Convergence, Towers2018Convergence} for solutions satisfying an interface condition at the discontinuity points.
In 2019, Towers \cite{Towers2020existence} proved convergence of a finite difference scheme to the adapted entropy solution defined by Audusse \cite{Audusse2005Uniqueness} for a special class of conservation laws with flux functions in the space of bounded variation (BV).
Recently, Ghoshal et. al. \cite{Ghoshal2020Convergence} proved the convergence of a Godunov scheme to the Audusse-Perthame adapted entropy solution for the case of BV flux functions. 

While the method proposed in \cite{Ghoshal2020Convergence} might be suitable to approximate the proposed stochastic conservation law for a broad class of stochastic jump coefficients, we note that in general, we cannot ensure our flux function to have bounded variation without restricting the possible choice of the covariance operator $\covarianceOperator$.

Throughout this section, we restrict ourselves to a bounded domain $\domainOfInterest \subset \bbR$ and a finite time interval $\timeInterval = (0, T) \subset \bbR_{> 0}$.
Unless stated otherwise, we have $\domainOfInterest \times \timeInterval = (0, 1)^2$.

\subsection{Sample-adapted discretization}
\label{ssec:adaptiveDiscretization}

In this section, we first propose different meshing strategies for approximating the adapted entropy solution via a Finite Volume scheme.
The main difference of these meshing strategies is, how they account for the discontinuities in the flux function.
Afterwards, we summarize the Finite Volume discretization for the reader's convenience.

\subsubsection{Sample-adapted meshing strategies}
\label{sssec:meshing-strategies}

In this section, we propose two meshing strategies that account for the discontinuities of the flux function.
First, we describe a samplewise jump-adapted meshing. 
Such a meshing has already been used in \cite{Barth2020Numerical} for creating Finite Element meshes to approximate advection-diffusion problems.
Afterwards, we propose an improvement of this sample-adapted meshing for non-linear advection equations. 
This novel strategy accounts for the standing-wave profiles occurring due to the flux discontinuities.

\paragraph{Samplewise jump-adapted meshing}
\label{par:jump-adaptive_meshing}

Let $\stochVar \in \stochDomain$ be arbitrary but fixed and denote by $\discontinuitySet (\stochVar) \subset \spaceDomain$ the set of discontinuities of $\jumpCoeff(\stochVar, \cdot)$.
For $\numGridPoints \in \bbN$, let $\spaceGrid = \{ \gridPoint{\spaceIdx + \frac{1}{2}}\}_{\spaceIdx = 0}^{\numGridPoints}$ be a decomposition of $\spaceDomain$ with $\gridPoint{\spaceIdx - \frac{1}{2}} < \gridPoint{\spaceIdx+\frac{1}{2}}$ such that $\discontinuitySet (\stochVar) \subset \spaceGrid$.
We denote the $\spaceIdx$-th grid cell by $\gridCell{\spaceIdx} := (\gridPoint{\spaceIdx - \frac{1}{2}}, \gridPoint{\spaceIdx + \frac{1}{2}})$ and its associated spatial mesh size is defined as $\spaceStepSize{\spaceIdx} = \gridPoint{\spaceIdx+\frac{1}{2}} - \gridPoint{\spaceIdx - \frac{1}{2}}$.
Similarly, for $\numTimePoints \in \bbN$, let $\timeGrid = \{ \timePoint{\timeIdx} \}_{\timeIdx = 0}^{\numTimePoints}$ be a discretization of the time interval with time step size $\timeStepSize{\timeIdx} = \timePoint{\timeIdx + 1} - \timePoint{\timeIdx}$, for $\timeIdx = 0, \ldots, \numTimePoints - 1$.
Note, the above construction ensures that each spatial discontinuity of the flux function is aligned with a cell interface of the Finite Volume method.

\paragraph{Samplewise jump-adapted wave-cell meshing}
\label{par:wave-cell_meshing}

The samplewise jump-adapted meshing method accounts for the discontinuities in the flux function, which produce standing wave profiles in the solution.
However, on meshes which allow for large step sizes, this standing wave profile might not be approximated sufficiently, if cells next to discontinuities are large.
Let again $\stochVar \in \stochDomain$ be arbitrary but fixed and let a samplewise jump-adapted mesh be given.
We denote by $\discontinuityInterface{i} \in \spaceGrid$ the grid cell interfaces that correspond to a discontinuity of the flux function and by $\waveCell{i}^{l,r}$ its left and right grid cell.
Further, we denote by $\Delta \waveCell{i}$ the size of these discontinuity adjacent cells, which we will call \textit{wave cells}. 
If 
\begin{equation}
	\label{eq:waveCellCondition}
	\Delta \waveCell{i} \geq 2 \min_{i} \spaceStepSize{i} ,
\end{equation}
we propose to introduce another grid point, such that the new wave cell satisfies
\begin{equation}
	\label{eq:waveCellSizeCondition}
	\min_{i} \spaceStepSize{i} \leq \Delta \waveCell{i} < 2 \min_{i} \spaceStepSize{i} .
\end{equation}
As we see in the subsequent experiments, this meshing method can significantly improve the approximation of standing waves occurring in the solution.
Note, this new mesh does not change the Courant-Friedrichs-Lewy (CFL) condition of the jump-adapted mesh, as it does not affect the minimal step size of the discretization.

\subsubsection{Finite Volume method}
\label{sssec:FV-method}

In this section, we summarize the main ideas of the Finite Volume scheme as a spatial discretization.
The formulation is independent of the chosen meshing strategy of the previous section.
We employ a classical Finite Volume discretization in conservative form, which is given by
\begin{equation}
	\label{eq:FV_approximation}
	\approxSolution{\timeIdx+1}{\spaceIdx} = \approxSolution{\timeIdx}{\spaceIdx} - \frac{\timeStepSize{\timeIdx}}{\spaceStepSize{\spaceIdx}} \left( \outFlux{}{\spaceIdx} - \inFlux{}{\spaceIdx} \right) \ , \qquad 
	\approxSolution{0}{\spaceIdx} \approx \frac{1}{\spaceStepSize{\spaceIdx}} \int_{\gridCell{\spaceIdx}} \initialCond(\stochVar, \spaceVar) \d{x} \ .
\end{equation}

In the presented numerical experiments, we employ the Godunov flux for spatially dependent flux functions, which is given by
\begin{equation}
	\label{eq:GeneralizedGodunovFlux}
	\inFlux{}{\spaceIdx} = 
	\begin{cases}
		\min\limits_{\approxSolution{}{\spaceIdx-1} \leq \optVar \leq \approxSolution{}{\spaceIdx}} \fluxFunction (\stochVar, \gridPoint{\spaceIdx - \frac{1}{2}}, \optVar) \qquad \text{if } \approxSolution{}{\spaceIdx-1} \leq \approxSolution{}{\spaceIdx} \ , \\
		\max\limits_{\approxSolution{}{\spaceIdx-1} \leq \optVar \leq \approxSolution{}{\spaceIdx}} \fluxFunction (\stochVar, \gridPoint{\spaceIdx - \frac{1}{2}}, \optVar) \qquad \text{if } \approxSolution{}{\spaceIdx} \leq \approxSolution{}{\spaceIdx-1} \ .
	\end{cases}
\end{equation}

Note, in the case of a multiplicative flux function $ \fluxFunction (\stochVar, \spaceVar, \solution) = \jumpCoeff (\stochVar, \spaceVar) f(\solution)$, with $f$ being convex, we can simplify Equation \eqref{eq:GeneralizedGodunovFlux}:
\begin{equation}
	\label{eq:unimodalGodunovFlux_convex}
	\inFlux{}{\spaceIdx} = \max \left\{ \jumpCoeff(\stochVar, \gridPoint{\spaceIdx-1}) f(\max(\approxSolution{}{\spaceIdx-1}, 0)), \jumpCoeff(\stochVar, \gridPoint{\spaceIdx}) f(\min(\approxSolution{}{\spaceIdx}, 0)) \right\}.
\end{equation}
Note, when $\jumpCoeff(\stochVar, \gridPoint{\spaceIdx-1}) = \jumpCoeff (\stochVar, \gridPoint{\spaceIdx})$, Equation \eqref{eq:unimodalGodunovFlux_convex} reduces to the classical Godunov flux, which is used for conservation laws, where the flux function is not spatially dependent.

For the numerical time integration, we use the forward Euler time integration scheme, resulting the time marching equation
\begin{equation}
	\label{eq:timeMarching}
	\approxSolution{\timeIdx+1}{\spaceIdx} = \approxSolution{\timeIdx}{\spaceIdx} - \frac{\timeStepSize{\timeIdx}}{\spaceStepSize{\spaceIdx}} \left( \outFlux{\timeIdx}{\spaceIdx} - \inFlux{\timeIdx}{\spaceIdx} \right)
\end{equation}
For stability reasons, we assume that the discretizations $\spaceGrid$ and $\timeGrid$ satisfy the Courant-Friedrichs-Lewy stability condition:
\begin{equation}
	\label{eq:CFLcondition}
	\frac{\timeStepSize{\timeIdx}}{\min\limits_{\spaceIdx} \spaceStepSize{\spaceIdx}} \CFLconst < 1 , \qquad \text{for all } \timeIdx = 0, \ldots, \numTimePoints - 1
\end{equation}
where
\begin{equation}
	\label{eq:CFLconstant_general}
	\CFLconst = \max_{i} \set{\abs{\frac{\partial \fluxFunction(\stochVar, \gridPoint{\spaceIdx - \frac{1}{2}}, \approxSolution{\timeIdx}{\spaceIdx})}{\partial \approxSolution{}{}}}}
\end{equation}
in the general case, and 
\begin{equation}
	\label{eq:CFLconstant_multiplicative}
	\CFLconst = \norm{\jumpCoeff (\stochVar)}_{\infty} \cdot \max_{i} \set{\abs{\f'(\approxSolution{\timeIdx}{\spaceIdx})}}
\end{equation}
in case of a multiplicative flux function.

\subsection{Stochastic Burgers' equation}

For the numerical experiments described in this section, we consider the stochastic Burgers' equation, which is given by
\begin{equation}
	\label{eq:stochasticBurgers}
	\begin{aligned}
		\solution_{\timeVar} + \left( \jumpCoeff(\stochVar, \spaceVar) \frac{\solution^2}{2} \right)_{\spaceVar} &= 0 &&\text{in } \stochDomain \times \spaceDomain \times \timeInterval \ , \\
		\solution(\stochVar, \spaceVar, 0) &= \initialCond (\stochVar, \spaceVar) &&\text{in } \stochDomain \times \spaceDomain \times \set{0} \ ,
	\end{aligned}
\end{equation}
which is equipped with periodic boundary conditions, i.e., $\jumpCoeff (\stochVar, 0) \frac{\solution (\stochVar, 0, t)^2}{2} = \jumpCoeff (\stochVar, 1) \frac{\solution (\stochVar, 1, t)^2}{2}$, unless stated otherwise.

The deterministic mean value of the stochastic jump coefficient is given by $\meanField \equiv 0$ and we choose $\gaussFunctional (\xi) = \exp(\xi)$. 
The Gaussian field $\gaussField_{\bbR}$ is characterized by the \textit{Mat\'{e}rn covariance operator} with smoothness parameter $\smoothness > 0$, variance $\variance > 0$ and correlation length $\correlationLength > 0$, given by
\begin{equation}
	\label{eq:MaternCovariance}
	\begin{aligned}
		\covarianceOperator_{\maternAbbrev}&: L^2(\domainOfInterest) \rightarrow L^2(\domainOfInterest) \ ,\\
		[\covarianceOperator_{\maternAbbrev} \testFunction] (y) &:= \int_{\spaceDomain} \variance \frac{2^{1-\smoothness}}{\Gamma(\smoothness)} \left( \sqrt{2 \smoothness} \frac{\abs{x - y}}{\correlationLength} \right)^{\smoothness} \besselFunction \left( \sqrt{2 \smoothness} \frac{\abs{x - y}}{\correlationLength} \right) \testFunction(x) dx \quad \forall \testFunction \in L^2(\domainOfInterest) \ .
	\end{aligned}
\end{equation}
Here, $\Gamma$ denotes the Gamma function and $\besselFunction$ denotes the modified Bessel function of second kind with $\smoothness$ degress of freedom.
The spectral basis of the covariance operator $\covarianceOperator_{\maternAbbrev}$ may be approximated via Nystr\"{o}m's method \cite{Williams2006Gaussian}.

\subsection{Parameter study}

In this section, we investigate the influence of various parameters of the stochastic jump coefficient on the solution of Equation \eqref{eq:stochasticBurgers} and on the convergence rate of its approximation.
Therefore, we consider the strong $L^{1}$-error, i.e.,
\begin{equation}
	\label{eq:strongError_L1}
	\cE (\stochVar) \coloneqq \norm{\solution^{\text{ref}}_{\Delta} (\stochVar, \cdot, \stopTime) - \solution_{\Delta} (\stochVar, \cdot, \stopTime)}_{L^{1}(\bbX)} .
\end{equation}
Here, $\solution^{\text{ref}}_{\Delta}$ is a numerical reference solution computed on a finer grid and $\solution_{\Delta}$ is the considered approximation.
We first investigate the influence of the jump field. 
Afterwards, we present two experiments providing insight about the performance of the explicit and implicit time integration scheme for various simulation settings.
We conclude by showing the influence of the parameters of the Gaussian random field on the numerical approximations. \\
Note, even though we only consider the $L^{1}$-error in this parameter study, the findings qualitatively also remain true, when we consider the $L^{2}$-error instead.

\subsubsection{Jump field parameters}
\label{sssec:jumpFieldParameters}

\paragraph{Distance between jumps}
In this experiment, we investigate, how the distance between two jumps influences the performance of the finite volume approximation. 
Here, we also compare the sample-adapted discretization with the corresponding equidistant approximation. 
Therefore, we denote the jump area with size $\delta > 0$ as $J_{\delta} = (1 - \frac{\pi}{10} - \frac{\delta}{2}, 1 - \frac{\pi}{10} + \frac{\delta}{2}) \subset (0, 1)$ and define our jump coefficient as $a(x) = P(x)$, where $P(x)$ has one of the following forms:
\begin{equation}
	P^{\text{up}}(x) = \begin{cases} \frac{1}{2} & \text{for } x \in (0, 1) \setminus J_{\delta} \\ \frac{3}{2\delta} & \text{for } x \in J_{\delta} \end{cases} , \qquad P^{\text{down}}(x) = \begin{cases} \frac{3}{2} & \text{for } x \in (0, 1) \setminus J_{\delta} \\ \delta & \text{for } x \in J_{\delta} \end{cases}.
\end{equation}
In Figures \ref{fig:jumpDistance_jumpUp} and \ref{fig:jumpDistance_jumpDown}, the convergence rates of the finite volume approximation is plotted for $P^{\text{up}}$ and $P^{\text{down}}$, respectively, and for various sizes of the jump area $J_{\delta}$.
\begin{figure}[htbp!]
	\begin{subfigure}{.32\textwidth}
		\includegraphics{./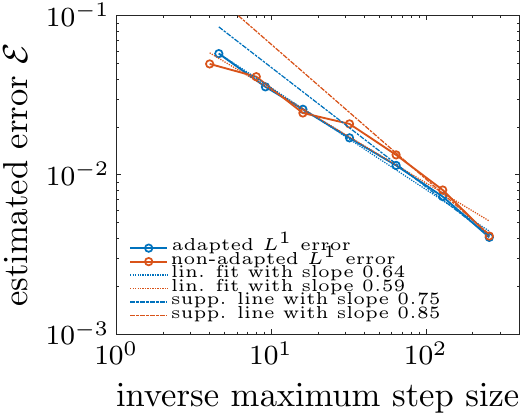}
%		\psfragfig[width=.32\textwidth]{./pictures/jumpInfluence/jumpDistance_jump-up_L1/delta_2-4}{
%			\psfrag{supportLine0.75}{\tiny supp. line with slope $0.75$}
%			\psfrag{linearFit0.64}{\tiny lin. fit with slope $0.64$}
%			\psfrag{nonAdaptedL1Error}{\tiny non-adapted $L^{1}$ error}
%			\psfrag{supportLine0.85}{\tiny supp. line with slope $0.85$}
%			\psfrag{linearFit0.59}{\tiny lin. fit with slope $0.59$}
%			\psfrag{jump-adaptedL1Error}{\tiny adapted $L^{1}$ error}
%		}
		\caption{$\delta = 2^{-4}$.}
	\end{subfigure}
	\hfill
	\begin{subfigure}{.32\textwidth}
		\includegraphics{./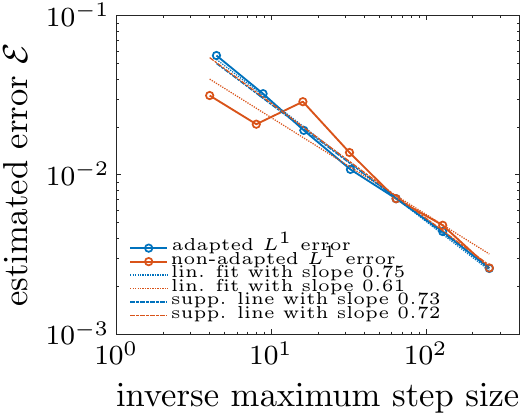}
%		\psfragfig[width=.32\textwidth]{./pictures/jumpInfluence/jumpDistance_jump-up_L1/delta_2-6}{
%			\psfrag{supportLine0.72}{\tiny supp. line with slope $0.72$}
%			\psfrag{linearFit0.75}{\tiny lin. fit with slope $0.75$}
%			\psfrag{nonAdaptedL1Error}{\tiny non-adapted $L^{1}$ error}
%			\psfrag{supportLine0.73}{\tiny supp. line with slope $0.73$}
%			\psfrag{linearFit0.61}{\tiny lin. fit with slope $0.61$}
%			\psfrag{jump-adaptedL1Error}{\tiny adapted $L^{1}$ error}
%		}
		\caption{$\delta = 2^{-6}$.}
	\end{subfigure}
	\hfill
	\begin{subfigure}{.32\textwidth}
		\includegraphics{./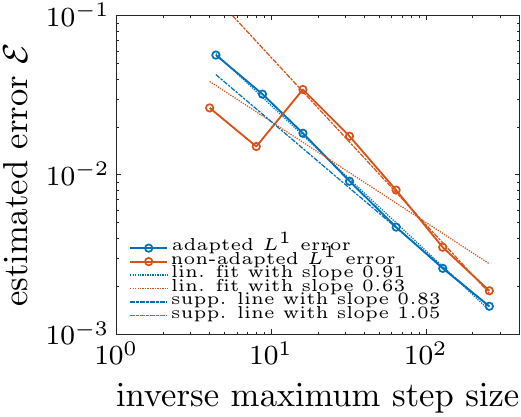}
%		\psfragfig[width=.32\textwidth]{./pictures/jumpInfluence/jumpDistance_jump-up_L1/delta_2-8}{
%			\psfrag{supportLine1.05}{\tiny supp. line with slope $1.05$}
%			\psfrag{linearFit0.63}{\tiny lin. fit with slope $0.63$}
%			\psfrag{nonAdaptedL1Error}{\tiny non-adapted $L^{1}$ error}
%			\psfrag{supportLine0.83}{\tiny supp. line with slope $0.83$}
%			\psfrag{linearFit0.91}{\tiny lin. fit with slope $0.91$}
%			\psfrag{jump-adaptedL1Error}{\tiny adapted $L^{1}$ error}
%		}
		\caption{$\delta = 2^{-8}$.}
	\end{subfigure}
	\caption{Convergence rates for $a(x) = P^{\text{up}}(x)$ and various $\delta$.}
	\label{fig:jumpDistance_jumpUp}
\end{figure}
\begin{figure}[htbp!]
	\begin{subfigure}{.32\textwidth}
		\includegraphics{./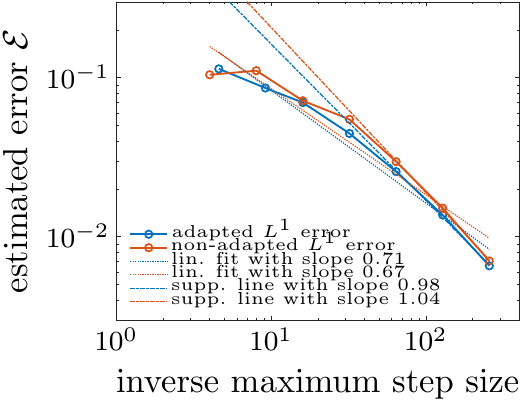}
%		\psfragfig[width=.32\textwidth]{./pictures/jumpInfluence/jumpDistance_jump-down_L1/delta_2-4}{
%			\psfrag{supportLine0.98}{\tiny supp. line with slope $0.98$}
%			\psfrag{linearFit0.67}{\tiny lin. fit with slope $0.67$}
%			\psfrag{nonAdaptedL1Error}{\tiny non-adapted $L^{1}$ error}
%			\psfrag{supportLine1.04}{\tiny supp. line with slope $1.04$}
%			\psfrag{linearFit0.71}{\tiny lin. fit with slope $0.71$}
%			\psfrag{jump-adaptedL1Error}{\tiny adapted $L^{1}$ error}
%		}
		\caption{$\delta = 2^{-4}$.}
	\end{subfigure}
	\hfill
	\begin{subfigure}{.32\textwidth}
		\includegraphics{./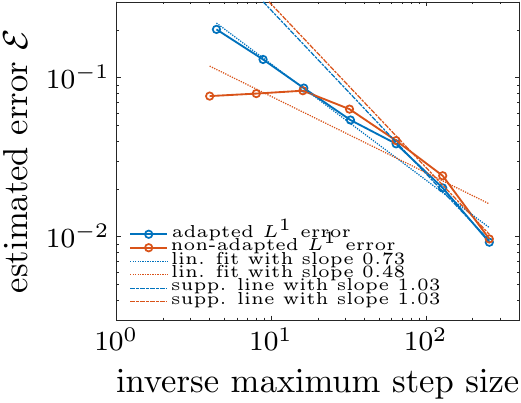}
%		\psfragfig[width=.32\textwidth]{./pictures/jumpInfluence/jumpDistance_jump-down_L1/delta_2-6}{
%			\psfrag{supportLine1.03}{\tiny supp. line with slope $1.03$}
%			\psfrag{linearFit0.73}{\tiny lin. fit with slope $0.73$}
%			\psfrag{nonAdaptedL1Error}{\tiny non-adapted $L^{1}$ error}
%			\psfrag{supportLine1.03}{\tiny supp. line with slope $1.03$}
%			\psfrag{linearFit0.48}{\tiny lin. fit with slope $0.48$}
%			\psfrag{jump-adaptedL1Error}{\tiny adapted $L^{1}$ error}
%		}
		\caption{$\delta = 2^{-6}$.}
	\end{subfigure}
	\hfill
	\begin{subfigure}{.32\textwidth}
		\includegraphics{./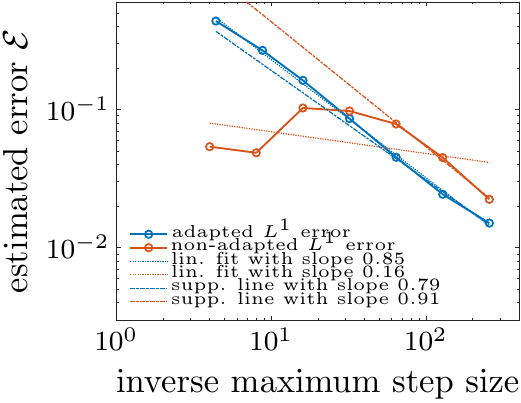}
%		\psfragfig[width=.32\textwidth]{./pictures/jumpInfluence/jumpDistance_jump-down_L1/delta_2-8}{
%			\psfrag{supportLine0.79}{\tiny supp. line with slope $0.79$}
%			\psfrag{linearFit0.85}{\tiny lin. fit with slope $0.85$}
%			\psfrag{nonAdaptedL1Error}{\tiny non-adapted $L^{1}$ error}
%			\psfrag{supportLine0.91}{\tiny supp. line with slope $0.91$}
%			\psfrag{linearFit0.16}{\tiny lin. fit with slope $0.16$}
%			\psfrag{jump-adaptedL1Error}{\tiny adapted $L^{1}$ error}
%		}
		\caption{$\delta = 2^{-8}$.}
	\end{subfigure}
	\caption{Convergence rates for $a(x) = P^{\text{down}}(x)$ and various $\delta$.}
	\label{fig:jumpDistance_jumpDown}
\end{figure}

While the convergence rate of the jump-adapted finite volume discretization does not seem to be affected by the distance between two jumps, the equidistant finite volume method has problems with approximating small distances.

\paragraph{Number of jumps}
We continue our numerical experiments by investigating, how the number of jumps in the coefficient affects the convergent rate of the (jump-adapted) finite volume discretization. 
Therefore, we consider again the case of $a(x) = P(x)$, where $P(x)$ is defined as in Equation \eqref{eq:defJumpField}, 
\begin{equation}
	\jumpField: \domainOfInterest \rightarrow \bbR_{>0}, \quad \spaceVar \mapsto \sum_{i = 0}^{\numberOfJumps+1} \boldsymbol{1}_{\partition_i}(x) \jumpHeights{i} \ , \qquad P_i = \begin{cases} \frac{1}{2} & i \text{ odd} \\ \frac{3}{2} & i \text{ even} \end{cases} .
\end{equation}
Above, $\numberOfJumps \in \bbN$ is the number of jumps and we set the jump positions $\jumpPosition{i}$ to be uniformly distributed over the domain, i.e., $\jumpPosition{i} \sim \cU((0, 1))$.
Figure \ref{fig:numberOfJumps} shows the convergence rates for various values of $\numberOfJumps$.
\begin{figure}[htbp!]
	\begin{subfigure}{.32\textwidth}
		\includegraphics{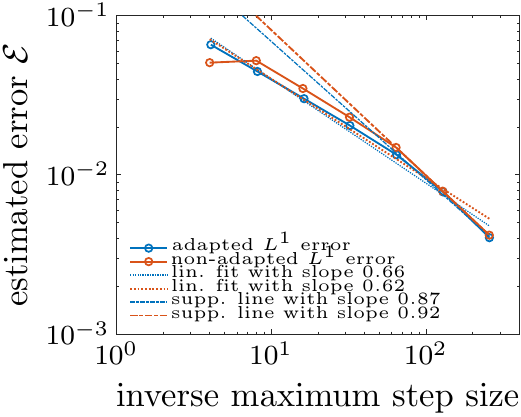}
%		\psfragfig[width=.32\textwidth]{./pictures/jumpInfluence/numberOfJumps_L1/numJumps-4}{
%			\psfrag{supportLine0.92}{\tiny supp. line with slope $0.92$}
%			\psfrag{linearFit0.62}{\tiny lin. fit with slope $0.62$}
%			\psfrag{nonAdaptedL1Error}{\tiny non-adapted $L^{1}$ error}
%			\psfrag{supportLine0.87}{\tiny supp. line with slope $0.87$}
%			\psfrag{linearFit0.66}{\tiny lin. fit with slope $0.66$}
%			\psfrag{jump-adaptedL1Error}{\tiny adapted $L^{1}$ error}
%		}
		\caption{$\numberOfJumps = 4$ jumps.}
	\end{subfigure}
	\hfill
	\begin{subfigure}{.32\textwidth}
				\includegraphics{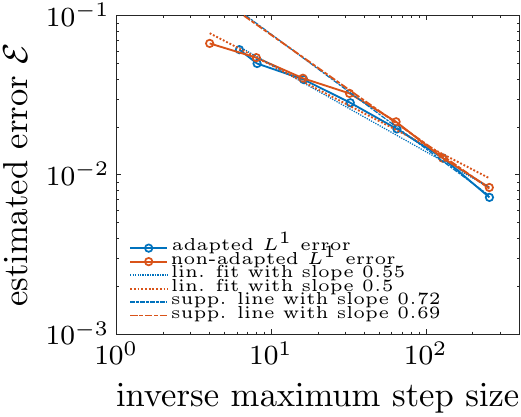}
%		\psfragfig[width=.32\textwidth]{./pictures/jumpInfluence/numberOfJumps_L1/numJumps-16}{
%			\psfrag{supportLine0.72}{\tiny supp. line with slope $0.72$}
%			\psfrag{linearFit0.55}{\tiny lin. fit with slope $0.55$}
%			\psfrag{nonAdaptedL1Error}{\tiny non-adapted $L^{1}$ error}
%			\psfrag{supportLine0.69}{\tiny supp. line with slope $0.69$}
%			\psfrag{linearFit0.5}{\tiny lin. fit with slope $0.5$}
%			\psfrag{jump-adaptedL1Error}{\tiny adapted $L^{1}$ error}
%		}
		\caption{$\numberOfJumps = 16$ jumps.}
	\end{subfigure}
	\hfill
	\begin{subfigure}{.32\textwidth}
				\includegraphics{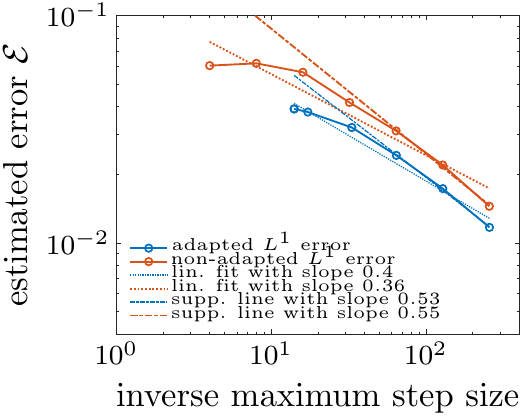}
%		\psfragfig[width=.32\textwidth]{./pictures/jumpInfluence/numberOfJumps_L1/numJumps-64}{
%			\psfrag{supportLine0.55}{\tiny supp. line with slope $0.55$}
%			\psfrag{linearFit0.4}{\tiny lin. fit with slope $0.4$}
%			\psfrag{nonAdaptedL1Error}{\tiny non-adapted $L^{1}$ error}
%			\psfrag{supportLine0.53}{\tiny supp. line with slope $0.53$}
%			\psfrag{linearFit0.36}{\tiny lin. fit with slope $0.36$}
%			\psfrag{jump-adaptedL1Error}{\tiny adapted $L^{1}$ error}
%		}
		\caption{$\numberOfJumps = 64$ jumps.}
		\label{sfig:64jumps}
	\end{subfigure}
	\caption{Convergence rates for $a(x) = P(x)$ with varying number of jumps.}
	\label{fig:numberOfJumps}
\end{figure}

One can see that an increasing number of jumps leads to a slower convergence rate. 
Another result is that the jump-adapted finite volume approximation does not improve the convergence rate of the method. 
However, for a coefficient with many jumps, Figure \ref{sfig:64jumps} shows that the jump-adapted discretization might lead to a better constant.

\subsubsection{Explicit vs. implicit time integration}

In this section, we conduct experiments on the performance of the explicit and implicit Euler scheme to integrate the jump-adapted finite volume discretization.
Specifically, we investigate the influence of two parameters on the performance of the methods, measured in time-to-error:
(i) The influence of $\frac{\max (\Delta x)}{\min (\Delta x)}$, which influences the possible time step size due to the CFL condition in the explicit scheme and the lack thereof in the implicit scheme.
(ii) The influence of the estimated wave speed of the solution, which also influences the possible time step size.
In both experiments, we compare the behaviour for multiple discretization parameters.

\paragraph{Influence of $\frac{\max (\Delta x)}{\min (\Delta x)}$}

In this experiment, we consider the following coefficient for some parameter $\delta > 0$
\begin{equation}
	a(x) = \begin{cases}
		0.5 & \text{for } 0 < x < \frac{\pi}{5} - \frac{\delta}{2} \\
		1.5 & \text{for } \frac{\pi}{5} - \frac{\delta}{2} < x < \frac{\pi}{5} + \frac{\delta}{2} \\
		0.5 & \text{for } \frac{\pi}{5} + \frac{\delta}{2} < x < 1
	\end{cases} .
\end{equation}
Above, $\delta$ denotes the distance between the two jumps of $a$ and thus imposes a restriction on $\min (\Delta x)$. 
The corresponding time-to-error plots for various values of $\delta$ are shown in Figure \ref{fig:explciitVsImplicit_DeltaX}.
\begin{figure}[htbp!]
	\begin{subfigure}{.325\textwidth}
		\includegraphics{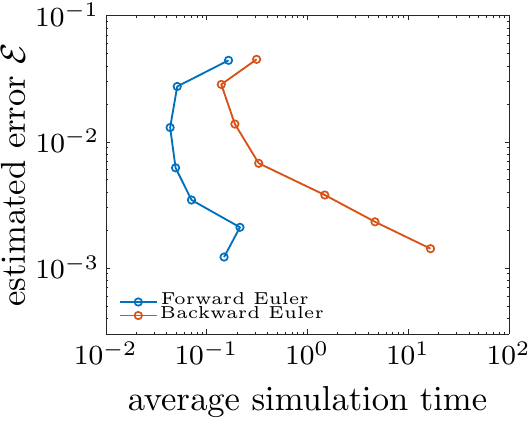}
%		\psfragfig[width=.32\textwidth]{./pictures/explicitVsImplicit/minStepSize_L1/delta_1e-2}{
%			\psfrag{forwardEuler}{\tiny Forward Euler}
%			\psfrag{backwardEuler}{\tiny Backward Euler}
%		}
		\caption{$\delta = 10^{-2}$.}
	\end{subfigure}
	\hfill
	\begin{subfigure}{.325\textwidth}
		\includegraphics{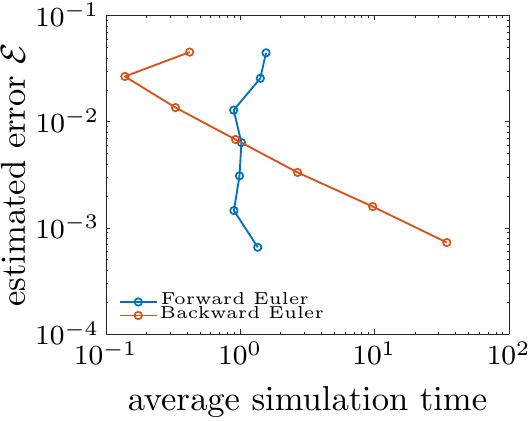}
%		\psfragfig[width=.32\textwidth]{./pictures/explicitVsImplicit/minStepSize_L1/delta_1e-3}{
%			\psfrag{forwardEuler}{\tiny Forward Euler}
%			\psfrag{backwardEuler}{\tiny Backward Euler}
%		}
		\caption{$\delta = 10^{-3}$.}
	\end{subfigure}
	\hfill
	\begin{subfigure}{.325\textwidth}
		\includegraphics{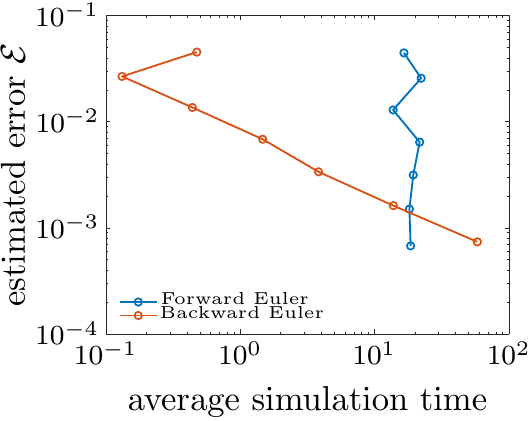}
%		\psfragfig[width=.32\textwidth]{./pictures/explicitVsImplicit/minStepSize_L1/delta_1e-4}{
%			\psfrag{forwardEuler}{\tiny Forward Euler}
%			\psfrag{backwardEuler}{\tiny Backward Euler}
%		}
		\caption{$\delta = 10^{-4}$.}
	\end{subfigure}
	\caption{Time to error plot of the forward Euler and backward Euler time discretization of the jump-adapted finite volume approximation for different jump distances $\delta > 0$.}
	\label{fig:explciitVsImplicit_DeltaX}
\end{figure}

It is not surprising that the backward Euler time discretization needs more time with decreasing error, since the size of the nonlinear system, which needs to be solved at each time step, increases.
Note that the computation time does not increase with finer discretizations. 
However, the amount of time necessary to evolve the solution over time with the explicit scheme depends highly on the allowed maximum time step size, which is directly influenced by $\delta$ through the CFL condition.

\paragraph{Influence of the estimated wave speed}

In this experiment, we conduct experiments on the influence of the estimated wave speed on the performance of the explicit and implicit time integrator, respectively.
Therefore, we consider an initial condition $\initialCond (x) = \kappa \sin (\pi x)$ with $\kappa \in \bbR_{>0}$ and a jump coefficient of the form
\begin{equation}
	a(x) = \begin{cases}
		P_1 & \text{for } 0 < x < \frac{\pi}{5} - \frac{1}{20} \\
		P_2 & \text{for } \frac{\pi}{5} - \frac{1}{20} < x < \frac{\pi}{5} + \frac{1}{20} \\
		P_1 & \text{for } \frac{\pi}{5} + \frac{1}{20} < x < 1
	\end{cases} .
\end{equation}
Figure \ref{fig:explicitVsImplicit_waveSpeed} visualizes the time-to-error plots for the explicit and implicit time integrator for different values of $\kappa$ and $P_1, P_2$.
\begin{figure}[h!tbp]
	\centering
	\begin{subfigure}[b]{.325\textwidth}
		\includegraphics{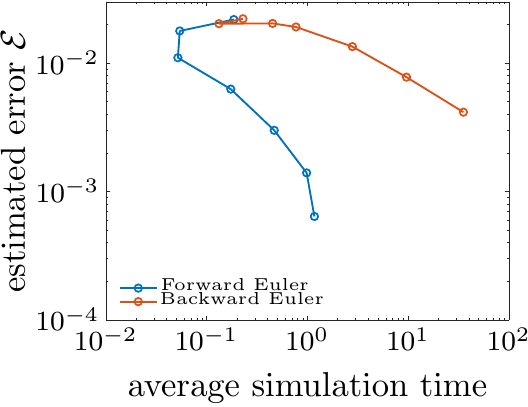}
%		\psfragfig[width=.32\textwidth]{./pictures/explicitVsImplicit/waveSpeed_L1/timeToError_maxCoeff-20}{
%			\psfrag{forwardEuler}{\tiny Forward Euler}
%			\psfrag{backwardEuler}{\tiny Backward Euler}
%		}
		\caption{$\kappa = 0.3$, $P_1 = 10.5$, $P_2 = 20$.}
	\end{subfigure}
	\hfill
	\begin{subfigure}[b]{.325\textwidth}
		\includegraphics{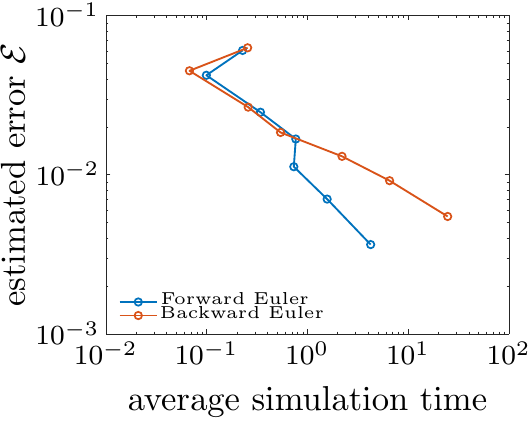}
%		\psfragfig[width=.32\textwidth]{./pictures/explicitVsImplicit/waveSpeed_L1/timeToError_maxCoeff-50}{
%			\psfrag{forwardEuler}{\tiny Forward Euler}
%			\psfrag{backwardEuler}{\tiny Backward Euler}
%		}
		\caption{$\kappa = 0.3$, $P_1 = 1$, $P_2 = 50$.}
	\end{subfigure}
	\hfill
	\begin{subfigure}[b]{.325\textwidth}
		\includegraphics{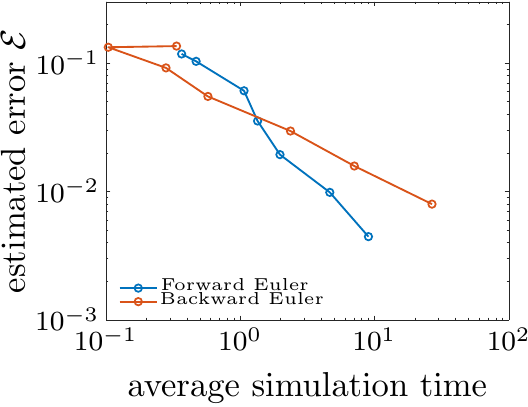}
%		\psfragfig[width=.32\textwidth]{./pictures/explicitVsImplicit/waveSpeed_L1/timeToError_maxCoeff-50_sin-1}{
%			\psfrag{forwardEuler}{\tiny Forward Euler}
%			\psfrag{backwardEuler}{\tiny Backward Euler}
%		}
		\caption{$\kappa = 1$, $P_1 = 1$, $P_2 = 50$.}
		\label{sfig:waveSpeed_50_sin1}
	\end{subfigure}
	\caption{Time to error plots of the forward and backward Euler scheme with a jump-adapted finite volume discretization for different values of $\kappa$ and $P_1, P_2$.}
	\label{fig:explicitVsImplicit_waveSpeed}
\end{figure}

As one can see in Figure \ref{sfig:waveSpeed_50_sin1}, the implicit time integration only outperforms the explicit time integration in the case of a coarse spatial discretization combined with a high maximum value of the jump coefficient.

\subsubsection{Parameters of Gaussian random field}

In this section, we investigate the influence of the parameters of the Gaussian random field on the convergence rates of the equidistant finite volume approximation.
Therefore, we set $a(\stochVar, \spaceVar) = \exp (W_{\bbR}(\stochVar, \spaceVar))$, i.e., we consider a log-Gaussian random field. 
The Gaussian random field $\gaussField_{\bbR}$ is characterized by the Mat{\'e}rn covariance operator (see Equation \eqref{eq:MaternCovariance}) with smoothness parameter $\smoothness > 0$, variance $\variance > 0$ and correlation length $\correlationLength > 0$.
In the subsequent experiments, we focus on the influence of the smoothness $\smoothness$ and the correlation length $\correlationLength$. \\
Since the Gaussian field is stochastic, the Figures \ref{fig:Gaussian_smoothness} and \ref{fig:Gaussian_correlationLength} show convergence rates that are averaged over $20$ samples.
For both experiments, the variance was fixed at $\variance = 0.1$. 
While the smoothness parameter $\smoothness$ was varied in the first experiment, the correlation length was fixed at $\correlationLength = 0.1$ and in the second experiment, we set $\smoothness = \infty$ and varied the correlation length.
\begin{figure}[h!tbp]
	\centering
	\begin{subfigure}[b]{.32\textwidth}
		\includegraphics{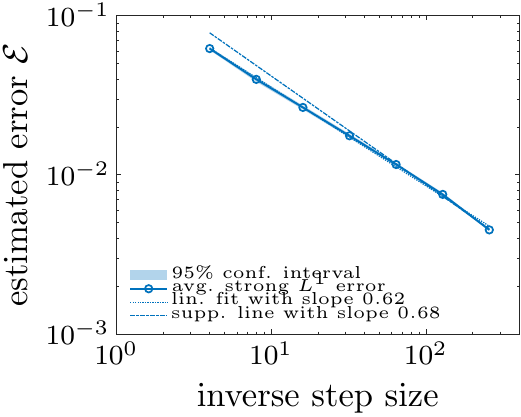}
%		\psfragfig[width=.32\textwidth]{./pictures/GaussianInfluence/smoothness_L1/smoothness-0-5}{
%			\psfrag{95confidenceInterval}{\tiny $95\%$ conf. interval}
%			\psfrag{supportLine0.68}{\tiny supp. line with slope $0.68$}
%			\psfrag{linearFit0.62}{\tiny lin. fit with slope $0.62$}
%			\psfrag{jump-adaptedL1Error}{\tiny avg. strong $L^{1}$ error}
%		}
		\caption{Mat{\'e}rn smoothness $\smoothness = 0.5$.}
	\end{subfigure}
	\hfill
	\begin{subfigure}[b]{.32\textwidth}
		\includegraphics{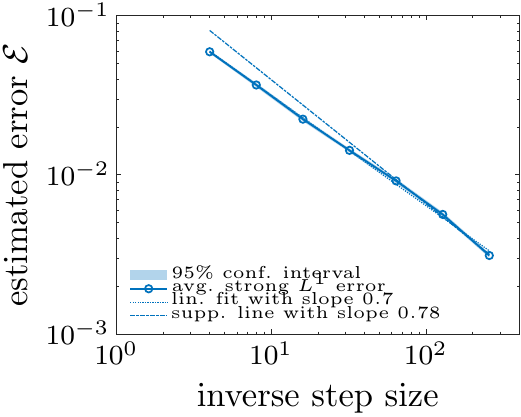}
%		\psfragfig[width=.32\textwidth]{./pictures/GaussianInfluence/smoothness_L1/smoothness-1}{
%			\psfrag{95confidenceInterval}{\tiny $95\%$ conf. interval}
%			\psfrag{supportLine0.78}{\tiny supp. line with slope $0.78$}
%			\psfrag{linearFit0.7}{\tiny lin. fit with slope $0.7$}
%			\psfrag{jump-adaptedL1Error}{\tiny avg. strong $L^{1}$ error}
%		}
		\caption{Mat{\'e}rn smoothness $\smoothness = 1$.}
	\end{subfigure}
	\hfill
	\begin{subfigure}[b]{.32\textwidth}
		\includegraphics{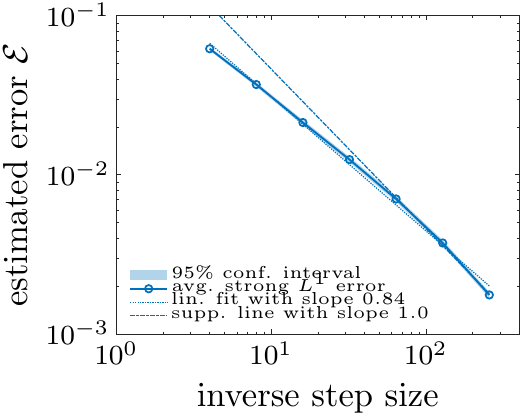}
%		\psfragfig[width=.32\textwidth]{./pictures/GaussianInfluence/smoothness_L1/smoothness-infty}{
%			\psfrag{95confidenceInterval}{\tiny $95\%$ conf. interval}
%			\psfrag{supportLine1}{\tiny supp. line with slope $1.0$}
%			\psfrag{linearFit0.84}{\tiny lin. fit with slope $0.84$}
%			\psfrag{jump-adaptedL1Error}{\tiny avg. strong $L^{1}$ error}
%		}
		\caption{Mat{\'e}rn smoothness $\smoothness = \infty$.}
	\end{subfigure}
	\caption{Convergence of the finite volume approximation for different smoothness parameters $\smoothness$ of the Mat{\'e}rn covariance kernel. The error estimation is based on $20$ samples.}
	\label{fig:Gaussian_smoothness}
\end{figure}

As one can see, with decreasing smoothness parameter, the convergence rate of the finite volume approximation gets lower.
This is, what one would expect, since rough paths of the Gaussian random field are not resolved as good as smooth paths.
A similar effect can be seen by reducing the correlation length of the covariance kernel, which is shown in Figure \ref{fig:Gaussian_correlationLength}.
\begin{figure}[h!tbp]
	\centering
	\begin{subfigure}[b]{.32\textwidth}
		\includegraphics{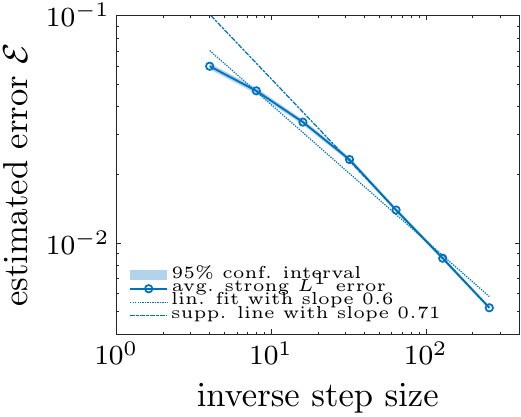}
%		\psfragfig[width=.32\textwidth]{./pictures/GaussianInfluence/correlationLength_L1/correlationLength-0-01_L1}{
%			\psfrag{95confidenceInterval}{\tiny $95\%$ conf. interval}
%			\psfrag{supportLine0.71}{\tiny supp. line with slope $0.71$}
%			\psfrag{linearFit0.6}{\tiny lin. fit with slope $0.6$}
%			\psfrag{jump-adaptedL1Error}{\tiny avg. strong $L^{1}$ error}
%		}
		\caption{Correlation length $\correlationLength = 0.01$.}
	\end{subfigure}
	\hfill
	\begin{subfigure}[b]{.32\textwidth}
		\includegraphics{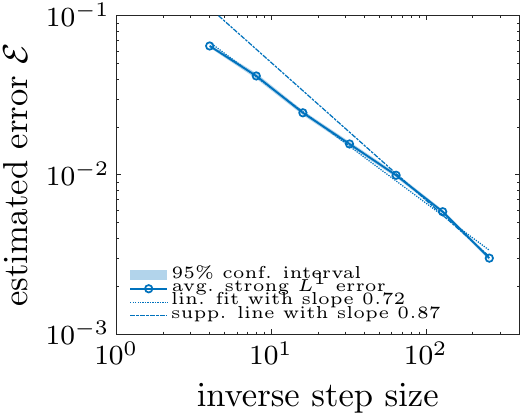}
%		\psfragfig[width=.32\textwidth]{./pictures/GaussianInfluence/correlationLength_L1/correlationLength-0-05_L1}{
%			\psfrag{95confidenceInterval}{\tiny $95\%$ conf. interval}
%			\psfrag{supportLine0.87}{\tiny supp. line with slope $0.87$}
%			\psfrag{linearFit0.72}{\tiny lin. fit with slope $0.72$}
%			\psfrag{jump-adaptedL1Error}{\tiny avg. strong $L^{1}$ error}
%		}
		\caption{Correlation length $\correlationLength = 0.05$.}
	\end{subfigure}
	\hfill
	\begin{subfigure}[b]{.32\textwidth}
		\includegraphics{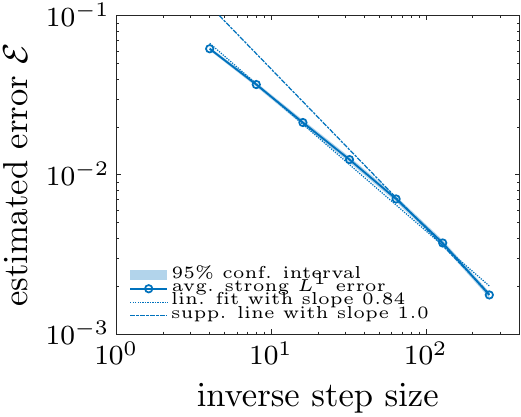}
%		\psfragfig[width=.32\textwidth]{./pictures/GaussianInfluence/correlationLength_L1/correlationLength-0-1_L1}{
%			\psfrag{95confidenceInterval}{\tiny $95\%$ conf. interval}
%			\psfrag{supportLine1}{\tiny supp. line with slope $1.0$}
%			\psfrag{linearFit0.84}{\tiny lin. fit with slope $0.84$}
%			\psfrag{jump-adaptedL1Error}{\tiny avg. strong $L^{1}$ error}
%		}
		\caption{Correlation length $\correlationLength = 0.1$.}
	\end{subfigure}
	\caption{Convergence of the finite volume approximation for different correlation length $\correlationLength$ of the Mat{\'e}rn covariance kernel. The error estimation is based on $20$ samples.}
	\label{fig:Gaussian_correlationLength}
\end{figure}

\subsection{Pathwise convergence study}

In this section, we investigate the strong error of the jump-adapted and wave cell finite volume method when computing the stochastic Burgers equation \eqref{eq:stochasticBurgers}. 
In the experiments described in Sections \ref{sssec:alternatingJumpField} and \ref{sssec:PoissonJumpField}, we consider specific jump coefficients, which have the form described in \eqref{eq:stochasticJumpCoefficient}.
Afterwards, in Section \ref{sssec:InclusionField}, we present experiments demonstrating the ability of the wave cell finite volume method.
Therefore, we consider a stochastic jump coefficient without the Gaussian random field part, i.e., having the simpler form $\jumpCoeff (\stochVar, \spaceVar) = \meanField (\spaceVar) + \jumpField (\stochVar, \spaceVar)$.
For the convergence study, we consider both the strong $L^{1}$-error and the strong $L^{2}$-error, i.e.,
\begin{equation}
	\label{eq:strongError}
	\cE (\stochVar) \coloneqq \norm{\solution^{\text{ref}}_{\Delta} (\stochVar, \cdot, \stopTime) - \solution_{\Delta} (\stochVar, \cdot, \stopTime)}_{L^{\star}(\bbX)} .
\end{equation}
Here, $\solution^{\text{ref}}_{\Delta}$ is a numerical reference solution computed on a finer grid, $\solution_{\Delta}$ is the considered approximation and $\norm{\cdot}_{L^{\star}(\bbX)}$ denotes either the $L^{1}(\bbX)$ or the $L^{2}(\bbX)$ norm.

\subsubsection{Alternating jump field with exponential Gaussian random field}
\label{sssec:alternatingJumpField}

In this first experiment, we consider a Gaussian random field $\gaussField$ with the Matérn covariance operator \eqref{eq:MaternCovariance} with a smoothness parameter $\smoothness = \frac{1}{2}$.
The jump field $\jumpField (\stochVar, \spaceVar)$ has $\numberOfJumps \sim Poi(5)+1$ jumps, which are located at the jump locations $\jumpPosition{i} \sim \cU (\cD)$.
The jump heights $\jumpHeights{i}$ corresponding to the partition, are given by
\begin{align}
	\label{eq:alternatingJumpHeights}
	\jumpHeights{i} \sim 
	\begin{cases}
		\cU ([ \frac{1}{4}, \frac{3}{4}]) &\text{for } i \text{ odd}, \\
		\cU ([ \frac{5}{4}, \frac{7}{4}]) &\text{for } i \text{ even}.
	\end{cases}
\end{align}
To illustrate this coefficient, Figure \ref{fig:alternatingExponential_samples} shows two realizations of the coefficient.
One can see that the occurring jumps are relatively small compared to the variation of the Gaussian random field, which is influenced by the small smoothness parameter $\smoothness$, the variance $\variance$ and the correlation length $\correlationLength$.
\begin{figure}[h!tbp]
	\centering
	\begin{subfigure}[b]{0.495\textwidth}
		\centering
		\includegraphics{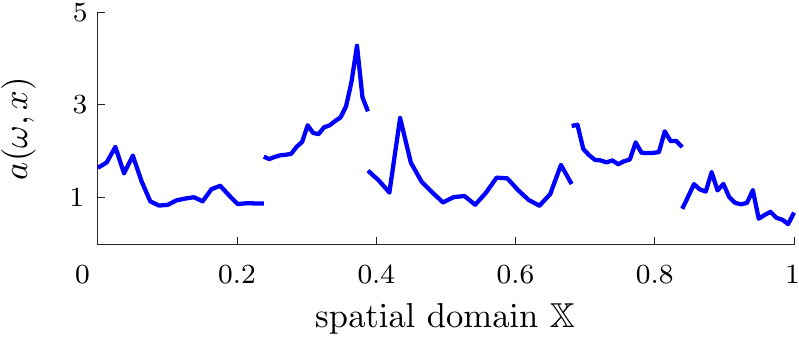}
%		\psfragfig[width=0.47\textwidth]{./pictures/coefficientSamples/alternatingExponential/alternatingExponential-Sample-1}{
%			\psfrag{0}[Bc][Bc]{\hspace*{-12pt} \raisebox{-3pt}{\footnotesize $0$}}
%			\psfrag{0.1}{}
%			\psfrag{0.2}[Bc][Bc]{\raisebox{-3pt}{\footnotesize $0.2$}}
%			\psfrag{0.3}{}
%			\psfrag{0.4}[Bc][Bc]{\raisebox{-3pt}{\footnotesize $0.4$}}
%			\psfrag{0.5}{}
%			\psfrag{0.6}[Bc][Bc]{\raisebox{-3pt}{\footnotesize $0.6$}}
%			\psfrag{0.7}{}
%			\psfrag{0.8}[Bc][Bc]{\raisebox{-3pt}{\footnotesize $0.8$}}
%			\psfrag{0.9}{}
%			\psfrag{1}[Bc][Bc]{\raisebox{-3pt}{\footnotesize $1$}}
%			\psfrag{a1}[cr][cr]{\raisebox{-3pt}{\footnotesize $1$}}
%			\psfrag{3}[cr][cr]{\raisebox{-3pt}{\footnotesize $3$}}
%			\psfrag{5}[cr][cr]{\raisebox{-3pt}{\footnotesize $5$}}
%			\psfrag{domain}[Bc][Bc]{\raisebox{-7pt}{spatial domain $\bbX$}}
%			\psfrag{coefficient}[bc][tc]{\raisebox{-24pt}{$\jumpCoeff (\stochVar, \spaceVar)$}}
%		}
	\end{subfigure}
	\hfill
	\begin{subfigure}[b]{0.495\textwidth}
		\centering
		\includegraphics{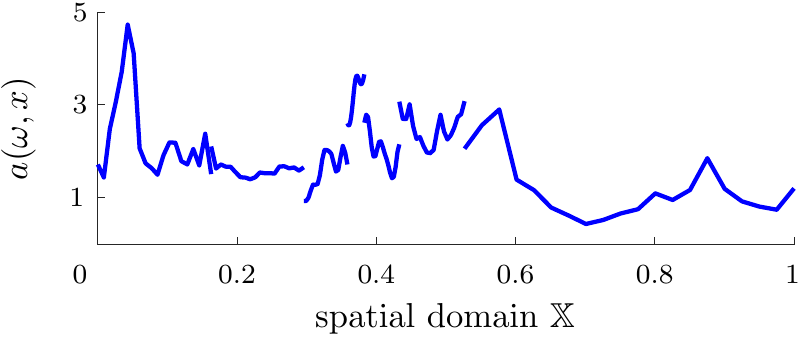}
%		\psfragfig[width=0.47\textwidth]{./pictures/coefficientSamples/alternatingExponential/alternatingExponential-Sample-2}{
%			\psfrag{0}[Bc][Bc]{\hspace*{-10pt}\raisebox{-3pt}{\footnotesize $0$}}
%			\psfrag{0.1}{}
%			\psfrag{0.2}[Bc][Bc]{\raisebox{-3pt}{\footnotesize $0.2$}}
%			\psfrag{0.3}{}
%			\psfrag{0.4}[Bc][Bc]{\raisebox{-3pt}{\footnotesize $0.4$}}
%			\psfrag{0.5}{}
%			\psfrag{0.6}[Bc][Bc]{\raisebox{-3pt}{\footnotesize $0.6$}}
%			\psfrag{0.7}{}
%			\psfrag{0.8}[Bc][Bc]{\raisebox{-3pt}{\footnotesize $0.8$}}
%			\psfrag{0.9}{}
%			\psfrag{1}[Bc][Bc]{\raisebox{-3pt}{\footnotesize $1$}}
%			\psfrag{a1}[cr][cr]{\raisebox{-3pt}{\footnotesize $1$}}
%			\psfrag{3}[cr][cr]{\raisebox{-3pt}{\footnotesize $3$}}
%			\psfrag{5}[cr][cr]{\raisebox{-3pt}{\footnotesize $5$}}
%			\psfrag{domain}[Bc][Bc]{\raisebox{-7pt}{spatial domain $\bbX$}}
%			\psfrag{coefficient}[bc][tc]{\raisebox{-24pt}{$\jumpCoeff (\stochVar, \spaceVar)$}}
%		}
	\end{subfigure}
	\caption{Two samples of the alternating jump field with an exponential Gaussian random field.}
	\label{fig:alternatingExponential_samples}
\end{figure}

In Figure \ref{fig:alternatingExponential_errorVsStepSize}, the pathwise $L^{1}$- and $L^{2}$-error are shown for $50$ samples. 
The corresponding reference solutions were computed with the wave cell finite volume method using a grid with approximately $4$ times as many discretization points.
One can see that the difference of the methods in not significant and that the order of convergence is similar. 
Since the distance between the jumps is quiet large on average, the discontinuities are already resolved on relatively coarse discretizations. 
Thus, the different in the discretizations has no significant effect and the presented behaviour is exactly, what we would expect.
\begin{figure}[h!tbp]
	\centering
	\begin{subfigure}[b]{.49\textwidth}
		\centering
		\includegraphics{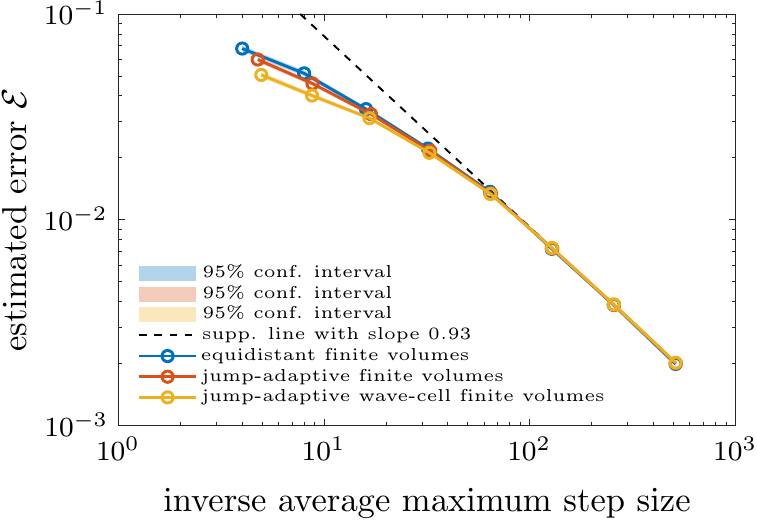}
%		\psfragfig[width=.49\textwidth]{./pictures/convergenceResults/alternatingExponential/L1-error_vs_invStepSize}{
%			\psfrag{95confInt.}{\tiny $95\%$ conf. interval}
%			\psfrag{suppLineSlope0.93}{\tiny supp. line with slope $0.93$}
%			\psfrag{FV}{\tiny equidistant finite volumes}
%			\psfrag{JAFV}{\tiny jump-adaptive finite volumes}
%			\psfrag{JAWCFV}{\tiny jump-adaptive wave-cell finite volumes}
%		}
		\caption{Pathwise $L^{1}$ error.}
	\end{subfigure}
	\hfill
	\begin{subfigure}[b]{.49\textwidth}
		\centering
		\includegraphics{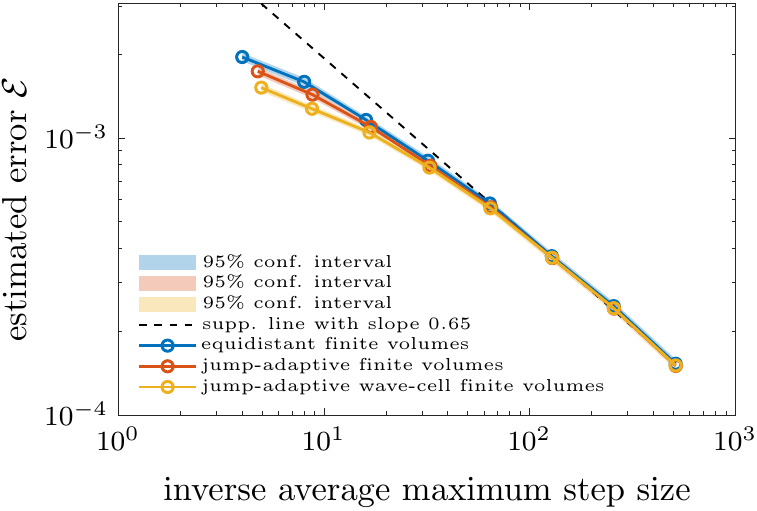}
%		\psfragfig[width=.49\textwidth]{./pictures/convergenceResults/alternatingExponential/L2-error_vs_invStepSize}{
%			\psfrag{95confInt.}{\tiny $95\%$ conf. interval}
%			\psfrag{suppLineSlope0.65}{\tiny supp. line with slope $0.65$}
%			\psfrag{FV}{\tiny equidistant finite volumes}
%			\psfrag{JAFV}{\tiny jump-adaptive finite volumes}
%			\psfrag{JAWCFV}{\tiny jump-adaptive wave-cell finite volumes}
%		}
		\caption{Pathwise $L^{2}$ error.}
	\end{subfigure}
	\caption{Strong error for the alternating jump field with exponential Gaussian random field. The error estimation is based on $50$ samples.}
	\label{fig:alternatingExponential_errorVsStepSize}
\end{figure}

\subsubsection{Poisson jump field with squared-exponential Gaussian random field}
\label{sssec:PoissonJumpField}

In this experiment, we consider a similar jump field as in Section \ref{sssec:alternatingJumpField}. 
The considered jump field $\jumpField$ has $\tau \sim Poi(5)+1$ jumps at jump positions $\jumpPosition{i} \sim \cU (\cD)$.
The corresponding jump heights are $\jumpHeights{i} \sim Poi(5) + 1$ distributed. 
This leads to a significant difference in the jump field, since the jump heights are not globally bounded. 
We additively combine this random jump field with a squared-exponential Gaussian random field, i.e., the Matérn covariance kernel \eqref{eq:MaternCovariance} having a smoothness parameter of $\smoothness = \infty$.

Two samples of this coefficient are shown in Figure \ref{fig:PoissonSquaredExponential_samples}. 
Note that, in contrast to the previous random field, the jumps are higher and have a more significant effect on the sample compared to the variation of the Gaussian part.
Also, let us mention that this coefficient is more likely to have larger values than the random field presented in the previous subsection.
\begin{figure}[h!tbp]
	\centering
	\begin{subfigure}[b]{0.495\textwidth}
		\centering
		\includegraphics{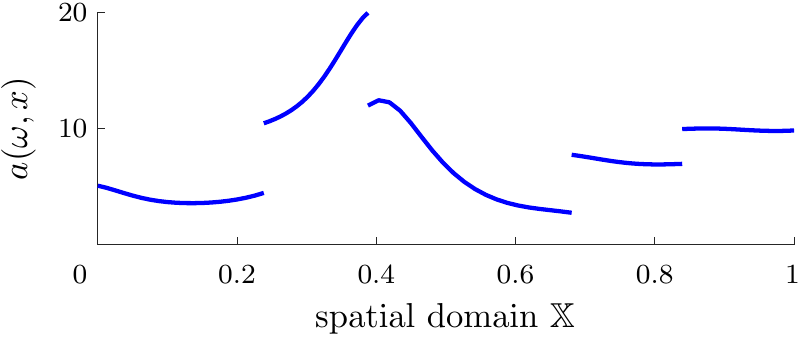}
%		\psfragfig[width=0.47\textwidth]{./pictures/coefficientSamples/PoissonSquaredExponential/PoissonSquaredExponential-Sample-1}{
%			\psfrag{0}[Bc][Bc]{\hspace*{-10pt}\raisebox{-3pt}{\footnotesize $0$}}
%			\psfrag{0.1}{}
%			\psfrag{0.2}[Bc][Bc]{\raisebox{-3pt}{\footnotesize $0.2$}}
%			\psfrag{0.3}{}
%			\psfrag{0.4}[Bc][Bc]{\raisebox{-3pt}{\footnotesize $0.4$}}
%			\psfrag{0.5}{}
%			\psfrag{0.6}[Bc][Bc]{\raisebox{-3pt}{\footnotesize $0.6$}}
%			\psfrag{0.7}{}
%			\psfrag{0.8}[Bc][Bc]{\raisebox{-3pt}{\footnotesize $0.8$}}
%			\psfrag{0.9}{}
%			\psfrag{1}[Bc][Bc]{\raisebox{-3pt}{\footnotesize $1$}}
%			\psfrag{10}[cr][cr]{\raisebox{-3pt}{\footnotesize $10$}}
%			\psfrag{20}[cr][cr]{\raisebox{-3pt}{\footnotesize $20$}}
%			\psfrag{domain}[Bc][Bc]{\raisebox{-7pt}{spatial domain $\bbX$}}
%			\psfrag{coefficient}[bc][tc]{\raisebox{-24pt}{$\jumpCoeff (\stochVar, \spaceVar)$}}
%		}
	\end{subfigure}
	\hfill
	\begin{subfigure}[b]{0.495\textwidth}
		\centering
		\includegraphics{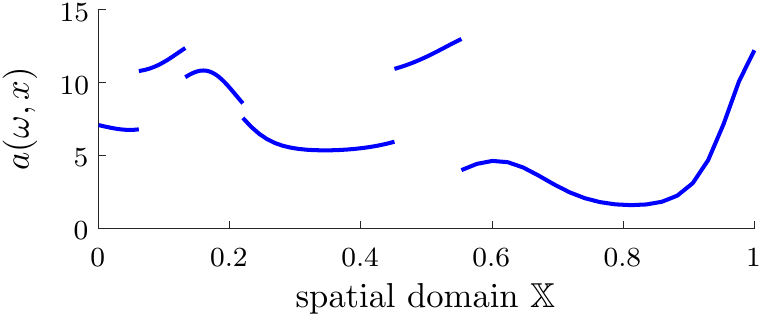}
%		\psfragfig[width=0.47\textwidth]{./pictures/coefficientSamples/PoissonSquaredExponential/PoissonSquaredExponential-Sample-2}{
%			\psfrag{x0.0}[Bc][Bc]{\raisebox{-3pt}{\footnotesize $0$}}
%			\psfrag{x0.2}[Bc][Bc]{\raisebox{-3pt}{\footnotesize $0.2$}}
%			\psfrag{x0.4}[Bc][Bc]{\raisebox{-3pt}{\footnotesize $0.4$}}
%			\psfrag{x0.6}[Bc][Bc]{\raisebox{-3pt}{\footnotesize $0.6$}}
%			\psfrag{x0.8}[Bc][Bc]{\raisebox{-3pt}{\footnotesize $0.8$}}
%			\psfrag{x1.0}[Bc][Bc]{\raisebox{-3pt}{\footnotesize $1$}}
%			\psfrag{y0.0}[cr][cr]{\raisebox{-3pt}{\footnotesize $0$}}
%			\psfrag{y5.0}[cr][cr]{\raisebox{-3pt}{\footnotesize $5$}}
%			\psfrag{y10.0}[cr][cr]{\raisebox{-3pt}{\footnotesize $10$}}
%			\psfrag{y15.0}[cr][cr]{\raisebox{-3pt}{\footnotesize $15$}}
%			\psfrag{domain}[Bc][Bc]{\raisebox{-7pt}{spatial domain $\bbX$}}
%			\psfrag{coefficient}[Bc][Bc]{\raisebox{-0pt}{$\jumpCoeff (\stochVar, \spaceVar)$}}
%		}
	\end{subfigure}
	\caption{Two realizations of the Poisson jump field with squared-exponential Gaussian random field.}
	\label{fig:PoissonSquaredExponential_samples}
\end{figure}

The pathwise $L^{1}$- and $L^{2}$-error are shown in Figure \ref{fig:PoissonSquaredExponential}. 
The convergence behaviour and the absolute error are again very similar, which is as expected, since the jumps are resolved on relatively coarse grids. 
However, we note a slightly higher convergence rate for both adapted discretization methods. 
This is not surprising, since the heights of the standing shock profiles resulting from the discontinuities in the flux function are better approximated with the adapted methods.
\begin{figure}[h!tbp]
	\centering
	\begin{subfigure}[b]{.49\textwidth}
		\centering
		\includegraphics{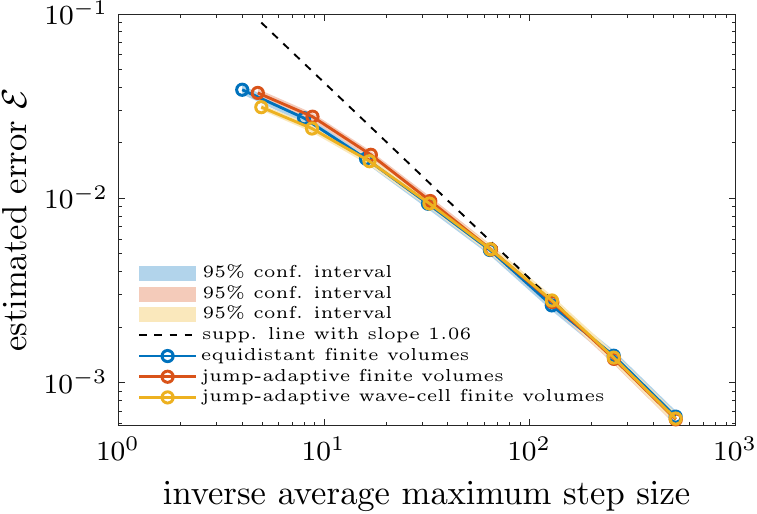}
%		\psfragfig[width=.49\textwidth]{./pictures/convergenceResults/PoissonSquaredExponential/L1-error_vs_invStepSize}{
%			\psfrag{95confInt.}{\tiny $95\%$ conf. interval}
%			\psfrag{suppLineSlope1.06}{\tiny supp. line with slope $1.06$}
%			\psfrag{FV}{\tiny equidistant finite volumes}
%			\psfrag{JAFV}{\tiny jump-adaptive finite volumes}
%			\psfrag{JAWCFV}{\tiny jump-adaptive wave-cell finite volumes}
%		}
		\caption{Pathwise $L^{1}$ error.}
	\end{subfigure}
	\hfill
	\begin{subfigure}[b]{.49\textwidth}
		\centering
		\includegraphics{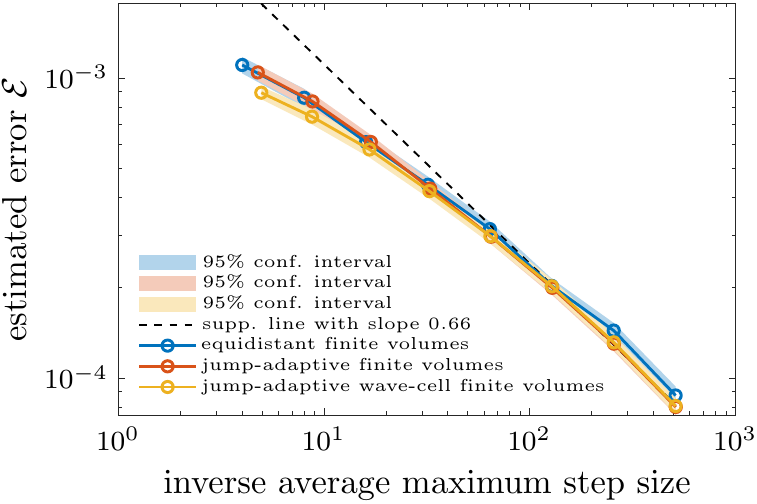}
%		\psfragfig[width=.49\textwidth]{./pictures/convergenceResults/PoissonSquaredExponential/L2-error_vs_invStepSize}{
%			\psfrag{95confInt.}{\tiny $95\%$ conf. interval}
%			\psfrag{suppLineSlope0.66}{\tiny supp. line with slope $0.66$}
%			\psfrag{FV}{\tiny equidistant finite volumes}
%			\psfrag{JAFV}{\tiny jump-adaptive finite volumes}
%			\psfrag{JAWCFV}{\tiny jump-adaptive wave-cell finite volumes}
%		}
		\caption{Pathwise $L^{2}$ error.}
		%		\label{sfig:Poisson_L2}
	\end{subfigure}
	\caption{Strong error for the Poisson jump field with squared-exponential Gaussian random field estimated with $50$ samples.}
	\label{fig:PoissonSquaredExponential}
\end{figure}

\subsubsection{Jump field with random inclusions}
\label{sssec:InclusionField}

In this last experiment, we demonstrate the ability of the wave cell finite volume discretization. 
Therefore, we consider the following stochastic jump coefficient having random inclusions: 
Let $\tau \sim Poi(10)+1$ denote the number of inclusions with positions $\chi_i \sim \cU (\cD)$.
The corresponding inclusion sizes are given by $\delta_i \sim \cU ([10^{-5}, 10^{-3}])$ and the heights of the inclusions are given by
\begin{align}
	\label{eq:inclusionHeights}
	H_k \sim 
	\begin{cases}
		Poi(30) &\text{for } k = 0, \\
		\frac{1}{\xi}, \xi \sim Poi(30) &\text{for } k = 1.
	\end{cases}
\end{align}
Let now $\mathfrak{I} \subset \cD$ denote the union of all inclusion. 
We define the jump field with random inclusions as
\begin{align}
	\label{eq:inclusionField}
	\jumpCoeff (\stochVar, \spaceVar) = 
	\begin{cases}
		H_k &\text{for } \spaceVar \in \mathfrak{I}, \\
		1 &\text{otherwise,}
	\end{cases}
\end{align}
where $k \sim B(1, \frac{1}{2})$ is a Bernoulli-$\frac{1}{2}$-distributed random variable, defining whether the inclusion height is given by $H_1$ or by $H_2$. 

The random field is illustrated in Figure \ref{fig:inclusionField_samples}, where two samples of the coefficient are shown.
Note, the inclusions are marked with a circle to increase the visibility.
Also, we see that the character of the random field is very different to the previous ones, since no Gaussian field is involved and thus the coefficient does not vary away from the jumps.
However, due to the small size of the inclusions, this coefficient is promising to demonstrate the qualitative difference between the proposed meshing methods.
\begin{figure}[h!tbp]
	\centering
	\begin{subfigure}[b]{0.495\textwidth}
		\centering
		\includegraphics{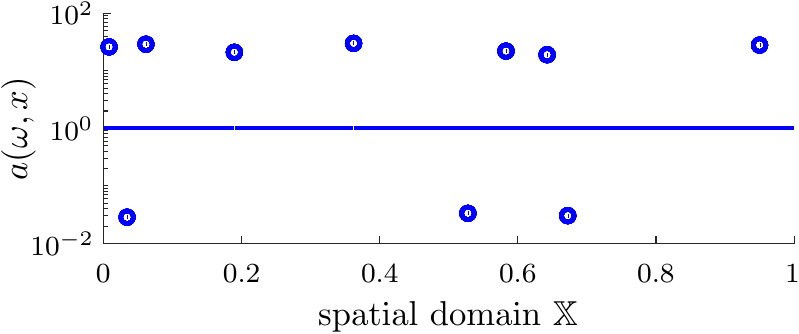}
%		\psfragfig[width=0.47\textwidth]{./pictures/coefficientSamples/inclusionField/inclusionField-Sample-1}{
%			\psfrag{0}[Bc][Bc]{\raisebox{-3pt}{\footnotesize $0$}}
%			\psfrag{0.1}{}
%			\psfrag{0.2}[Bc][Bc]{\raisebox{-3pt}{\footnotesize $0.2$}}
%			\psfrag{0.3}{}
%			\psfrag{0.4}[Bc][Bc]{\raisebox{-3pt}{\footnotesize $0.4$}}
%			\psfrag{0.5}{}
%			\psfrag{0.6}[Bc][Bc]{\raisebox{-3pt}{\footnotesize $0.6$}}
%			\psfrag{0.7}{}
%			\psfrag{0.8}[Bc][Bc]{\raisebox{-3pt}{\footnotesize $0.8$}}
%			\psfrag{0.9}{}
%			\psfrag{1}[Bc][Bc]{\raisebox{-3pt}{\footnotesize $1$}}
%			\psfrag{e-2}[cr][cr]{\footnotesize $10^{-2}$}
%			\psfrag{e0}[cr][cr]{\footnotesize $10^{0}$}
%			\psfrag{e2}[cr][cr]{\footnotesize $10^{2}$}
%			\psfrag{domain}[Bc][Bc]{\raisebox{-7pt}{spatial domain $\bbX$}}
%			\psfrag{coefficient}[bc][tc]{\raisebox{+2pt}{$\jumpCoeff (\stochVar, \spaceVar)$}}
%		}
	\end{subfigure}
	\hfill
	\begin{subfigure}[b]{0.495\textwidth}
		\centering
		\includegraphics{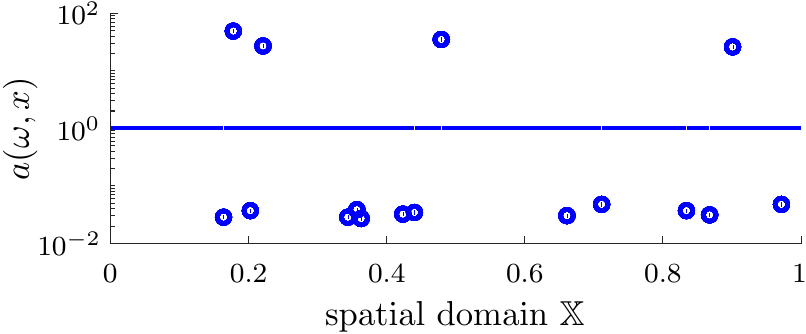}
%		\psfragfig[width=0.47\textwidth]{./pictures/coefficientSamples/inclusionField/inclusionField-Sample-2}{
%			\psfrag{0}[Bc][Bc]{\raisebox{-3pt}{\footnotesize $0$}}
%			\psfrag{0.1}{}
%			\psfrag{0.2}[Bc][Bc]{\raisebox{-3pt}{\footnotesize $0.2$}}
%			\psfrag{0.3}{}
%			\psfrag{0.4}[Bc][Bc]{\raisebox{-3pt}{\footnotesize $0.4$}}
%			\psfrag{0.5}{}
%			\psfrag{0.6}[Bc][Bc]{\raisebox{-3pt}{\footnotesize $0.6$}}
%			\psfrag{0.7}{}
%			\psfrag{0.8}[Bc][Bc]{\raisebox{-3pt}{\footnotesize $0.8$}}
%			\psfrag{0.9}{}
%			\psfrag{1}[Bc][Bc]{\raisebox{-3pt}{\footnotesize $1$}}
%			\psfrag{e-2}[cr][cr]{\footnotesize $10^{-2}$}
%			\psfrag{e0}[cr][cr]{\footnotesize $10^{0}$}
%			\psfrag{e2}[cr][cr]{\footnotesize $10^{2}$}
%			\psfrag{domain}[Bc][Bc]{\raisebox{-7pt}{spatial domain $\bbX$}}
%			\psfrag{coefficient}[bc][tc]{\raisebox{+4pt}{$\jumpCoeff (\stochVar, \spaceVar)$}}
%		}
	\end{subfigure}
	\caption{Two realizations of the jump field with random inclusions.}
	\label{fig:inclusionField_samples}
\end{figure}

In Figure \ref{fig:inclusionField}, we can see the performance of the different discretization methods measured by the strong $L^{1}$ and $L^{2}$ error for $100$ samples. 
While the adapted discretizations are converging with a similar rate, the standard finite volume method does not seem to converge at all or at a very low convergence rate. 
This behaviour is what one would expect for this jump coefficient and the standard finite volume method:
Due to the small size of the inclusions, the standard finite volume scheme does not resolve the discontinuities in the flux function. 
Thus, when refining the spatial step size, the standard finite volume discretization needs a much finer resolution of the grid to resolve the discontinuities than the adaptive discretization schemes. 
Note, asymptotically, we expect all three discretization schemes to converge with the same convergence rate.  \\
We also note the better error constant in the wave cell finite volume method compared to the jump-adapted finite volume method. 
This behaviour can be explained by the better approximation of the standing wave profiles resulting from the discontinuities of the flux function.
\begin{figure}[h!tbp]
	\centering
	\begin{subfigure}[b]{.49\textwidth}
		\includegraphics{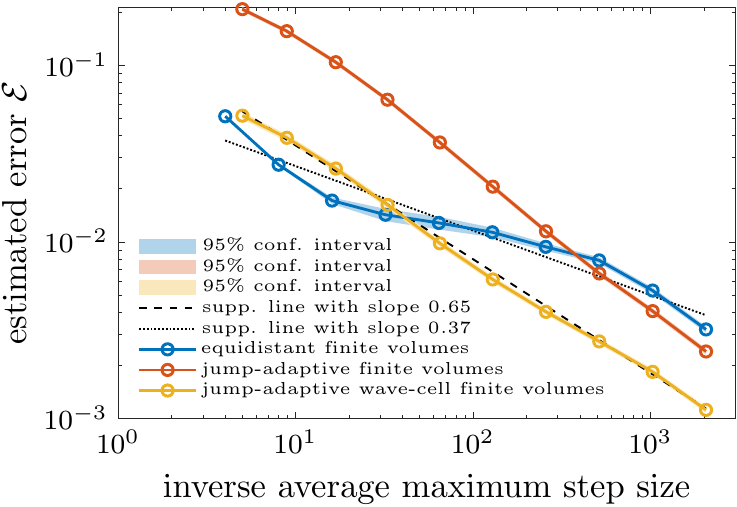}
%		\psfragfig[width=.49\textwidth]{./pictures/convergenceResults/inclusionField/L1-error_vs_invStepSize}{
%			\psfrag{95confInt.}{\tiny $95\%$ conf. interval}
%			\psfrag{suppLineSlope0.65}{\tiny supp. line with slope $0.65$}
%			\psfrag{linFitSlope0.37}{\tiny supp. line with slope $0.37$}
%			\psfrag{FV}{\tiny equidistant finite volumes}
%			\psfrag{JAFV}{\tiny jump-adaptive finite volumes}
%			\psfrag{JAWCFV}{\tiny jump-adaptive wave-cell finite volumes}
%		}
		\caption{Pathwise $L^{1}$ error.}
	\end{subfigure}
	\hfill
	\begin{subfigure}[b]{.49\textwidth}
		\includegraphics{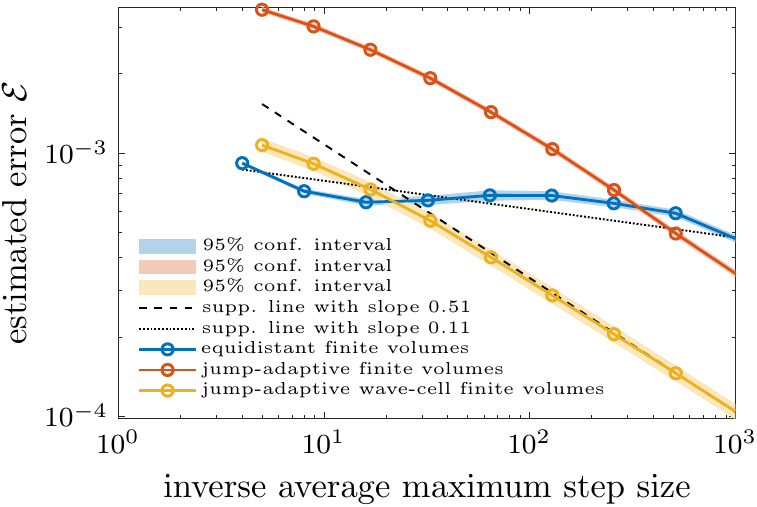}
%		\psfragfig[width=.49\textwidth]{./pictures/convergenceResults/inclusionField/L2-error_vs_invStepSize}{
%			\psfrag{95confInt.}{\tiny $95\%$ conf. interval}
%			\psfrag{suppLineSlope0.51}{\tiny supp. line with slope $0.51$}
%			\psfrag{linFitSlope0.11}{\tiny supp. line with slope $0.11$}
%			\psfrag{FV}{\tiny equidistant finite volumes}
%			\psfrag{JAFV}{\tiny jump-adaptive finite volumes}
%			\psfrag{JAWCFV}{\tiny jump-adaptive wave-cell finite volumes}
%		}
		\caption{Pathwise $L^{2}$ error.}
	\end{subfigure}
	\caption{Strong error for the stochastic jump field with random inclusions based on $100$ samples.}
	\label{fig:inclusionField}
\end{figure}

\section*{Declaration of interest}
None.

\section*{Acknowledgement}
The research leading to these results has received funding from the German Research Foundation (Deutsche Forschungsgemeinschaft, DFG) under Germany's Excellence Strategy EXC-2075 390740016 at the University of Stuttgart and it is gratefully acknowledged.

\printbibliography
\end{document}